\newcommand{\R}{\mathbb{R}}
\newcommand{\N}{\mathbb{N}}
\newcommand{\eps}{\varepsilon}
\DeclareMathOperator{\dom}{dom}
\DeclareMathOperator{\ran}{ran}
\DeclareMathOperator{\gra}{gra}
\newcommand{\tos}{\rightrightarrows} 
\newtheorem{theorem}{Theorem}[section]
\newtheorem{lemma}[theorem]{Lemma}
\newtheorem{proposition}[theorem]{Proposition}
\newtheorem{corollary}[theorem]{Corollary}
\theoremstyle{definition}
\theoremstyle{remark}
\newtheorem{remark}[theorem]{Remark}
\definecolor{mygray}{rgb}{0.95,0.95,0.95}
\lstdefinelanguage{Maxima}{
backgroundcolor=\color{mygray},
keywords={if,then,else,for,thru,in,do,block,sum,makelist,length,addrow,addcol,zeromatrix,ident,augcoefmatrix,ratsubst,diff,ev,tex,sort,listofvars,apply,append,subst,lhs,rhs,minimize_lp,listp,cons,delete,unique,ratsimp,%
with_stdout,nouns,express,depends,load,submatrix,div,grad,curl,%
rootscontract,solve,part,assume,sqrt,integrate,abs,inf,exp},
morekeywords={tuples,ext},
sensitive=true, 
comment=[n][\color{blue}\itshape]{/*}{*/},
basicstyle=\small\ttfamily
}
\newcommand{\ds}{\displaystyle}
\newcommand{\inner}[2]{\langle #1,#2 \rangle}
\DeclareMathOperator{\cone}{cone}
\DeclareMathOperator{\co}{co}
\begin{document}

\title{On the construction of maximal $p$-cyclically monotone operators}


\author{Orestes Bueno
\thanks{Universidad del Pac\'ifico (Lima, Per\'u). Email: \mbox{\texttt{\{o.buenotangoa, cotrina\_je\}@up.edu.pe}}}
\and John Cotrina\footnotemark[1]}
\date{}

\maketitle

\begin{abstract}
In this paper we deal with the construction of explicit examples of maximal $p$-cyclically monotone operators. To date, there is only one instance of an explicit example of a maximal 2-cyclically monotone operator that is not maximal monotone. We present a systematic way to construct this kind of examples, along with several explicit examples.
\medskip

\noindent {\bf Keywords:} maximal $p$-cyclically monotone operators; $p$-cyclically monotone polar\smallskip

\noindent{\bf MSC (2010):} 47H04; 47H05; 49J53
\end{abstract}

\section{Introduction}\label{intro}
Let $U,V$ be non-empty sets. A \emph{multivalued operator} $T:U\tos V$ is an application $T:U\to \mathcal{P}(V)$, that is, for $u\in U$, $T(u)\subset V$. 
The \emph{domain}, \emph{range} and \emph{graph} of $T$ are defined, respectively, as
\begin{gather*}
\dom(T)=\big\{u\in U\::\: T(u)\neq\emptyset\big\},\qquad \ran(T)=\bigcup_{u\in U}T(u)\\ 
\text{and }\gra(T)=\big\{(u,v)\in U\times V\::\: v\in T(u)\big\}.
\end{gather*}
A multivalued operator $T$ is \emph{finite} if its graph is a finite set.
From now on, unless otherwise stated, we will identify multivalued operators with their graphs, so we will write $(u,v)\in T$ instead of $(u,v)\in \gra(T)$. 

Let $V$ be a vector space. If $A\subset V$ then $\co(A)$
denotes the \emph{convex hull}  
of $A$. 
If $T:U\tos V$ is a multivalued operator, the operator $T_{\co}:U\tos V$ is defined as $T_{\co}(u)=\co(T(u))$, for all $u\in U$.

Let $X$ be a Banach space and $X^*$ be its topological dual. 
The \emph{duality product} $\inner{\cdot}{\cdot}:X\times X^*\to\R$ is defined as $\inner{x}{x^*}=x^*(x)$. 
%
Given $C\subset X$ convex, the \emph{normal cone operator} is the operator $N_C:X\tos X^*$ defined as
\[
N_C(x)=\{x^*\in X^*\::\:\inner{y-x}{x^*}\leq 0,\,\forall\,y\in C\},
\]
when $x\in C$ and $N_C(x)=\emptyset$, otherwise. In addition, the \emph{recession cone} of $C$ is the set
\[
0^+C=\{d\in X\::\:\forall x\in C,\,\forall t>0,\, x+td\in C\}.
\]
Given a cone $K\subset X$, the \emph{polar cone} of $K$ is the set 
\[
K^{\circ}=\{x^*\in X^*\::\: \inner{x}{x^*}\leq 0,\,\forall\,x\in K\}. 
\]

%

A multivalued operator $T:X\tos X^*$ is called \emph{monotone} if, for every pair $(x,x^*)$, $(y,y^*)\in T$, 
\[
\inner{x-y}{x^*-y^*}\geq 0.
\]
Moreover, $T$ is \emph{maximal monotone} if $T$ is not properly contained in a monotone operator. 

The notion of $p$-cyclical monotonicity was introduced by Rockafellar in \cite{MR193549} as a ``midpoint'' between classical monotonicity and cyclical monotonicity.  A multivalued operator $T:X\tos X^*$ is called \emph{$p$-cyclically monotone}, with $p\in \N$, if  
\[
\{(x_i,x_i^*)\}_{i=0}^p\subset T\quad\longrightarrow\quad\sum_{i=0}^p\langle x_{i+1}-x_i,x_i^*\rangle\leq 0,
\]
where $x_{p+1}=x_0$. An operator is called \emph{cyclically monotone} if it is $p$-cyclically monotone, for all $p\in\N$.
As in the monotone case, we can consider \emph{maximality} for $p$-cyclically monotone operators: a $p$-cyclically monotone operator is called \emph{maximal $p$-cyclically monotone}, if its graph is not properly contained in the graph of another $p$-cyclically monotone operator.
%

Note that if $T$ is a $p$-cyclically monotone operator, then it is $q$-cyclically monotone, for all $q\leq p$. Moreover, if $T$ is maximal $p$-cyclically monotone and $q$-cyclically monotone, for some $q>p$, then it is also maximal $q$-cyclically monotone. In particular, a maximal monotone operator which is $p$-cyclically monotone, is also maximal $p$-cyclically monotone. Examples of these kinds of operators are the rotation matrices~\cite{Asplund70,Crouzeix2003}. So it is natural to ask if a maximal $p$-cyclically monotone operator is also maximal monotone. The answer was given, in the negative, by Bartz, Bauschke, Borwein, Reich and Wang in~\cite{BBBRW07}.  Bartz \emph{et al.}, using Zorn's Lemma, showed the existence of a maximal 2-cyclically monotone operator that is not maximal monotone. Later on, in~\cite{Bauschke08}, Bauschke and Wang constructed an explicit example of such an operator. This example has also the \emph{bizarre} property of having a non-convex closed domain (in fact, its domain is the boundary of the unit diamond $|x|+|y|=1$.)

Construction of maximal monotone operators was addressed previously by Crouzeix, Oca\~na-Anaya and Sosa~\cite{MR2402763}. See also~\cite{MR3070106}. The goal of this work is to provide a way to construct explicit examples of maximal $p$-cyclically monotone operators. To do this, we use the recently defined \emph{$p$-cyclically monotone polar}~\cite{BueCot19}. We also provide a way to deal with the equations that arise from the definition and the $p$-cyclical monotonicity of this kind of examples. Although the question  whether our method always provides maximal $p$-cyclically monotone operators remains open, we prove maximality of a certain family of examples, namely, operators that can be decomposed as the sum of a normal cone plus a union of perpendicular line segments in $X\times X^*$.

The paper is organized as follows: in section~2, we provide several technical results that are needed later. In section~3, we describe a procedure that constructs explicit examples of $p$-cyclically monotone operators, starting from a finite $p$-cyclically monotone operator. In section~4, we present some computational subroutines that allow us to compute explicit examples and simplify their analysis. In section~5 we present four new examples: three maximal 2-cyclically monotone operators that are not maximal monotone (one with domain in $\R^3$) and a maximal 3-cyclically monotone operator which is not maximal 2-cyclically monotone.

\section{Technical Results}
The $p$-cyclically monotone polar~\cite{BueCot19} is an extension to the $p$-cyclically monotone case of the well known \emph{monotone polar}~\cite{BSML}. The \emph{$p$-cyclically monotone polar} (or simply, $p$-polar) of a multivalued operator $T:X\tos X^*$ is the operator $T^{\mu_p}$ defined via its graph as
\[
T^{\mu_p}=\left\{(x_0,x_0^*)\::\: \sum_{i=0}^{p} \inner{x_{i+1}-x_i}{x_i^*}\leq 0,\,\forall\,\{(x_i,x_i^*)\}_{i=1}^p\subset T\text{ with }x_{p+1}=x_0\right\}.
\]

\begin{proposition}[{\cite{BueCot19}}]\label{pro:pmono}
Let $T,T_i:X\tos X^*$ be multivalued operators and $p\in\N$. The following hold:
\begin{enumerate}
\item $T^{\mu_{p+1}}\subset T^{\mu_p}$.
\item If $T_1\subset T_2$ then $T_2^{\mu_p}\subset T_1^{\mu_p}$.
\item $\displaystyle\left(\bigcup_{i\in I}T_i\right)^{\mu_p}\subset \bigcap_{i\in I}T_i^{\mu_p}$. (Equality holds when $p=1$)
\item The graph of $T^{\mu_p}$ is (strongly-)closed.
 \item $T$ is $p$-cyclically monotone if, and only if, $T\subset T^{\mu_p}$.
 \item $T$ is maximal $p$-cyclically monotone if, and only if, $T=T^{\mu_p}$.
\end{enumerate}
\end{proposition}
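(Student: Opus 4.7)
The plan is to verify the six items in order, leveraging earlier items where possible. Items (1)--(5) and the easy direction of (6) follow from the definition with little effort. For item (1), given $(x_0,x_0^*)\in T^{\mu_{p+1}}$ and a $p$-tuple $\{(x_i,x_i^*)\}_{i=1}^{p}\subset T$, I would pad it to length $p+1$ by setting $(x_{p+1},x_{p+1}^*):=(x_p,x_p^*)$; the extra summand $\langle x_{p+1}-x_p,x_p^*\rangle$ vanishes and the closing term is unaffected, so applying the $(p+1)$-polar inequality to the padded tuple yields the $p$-polar inequality. Item (2) is immediate, as enlarging $T$ imposes more constraints on its polar. Item (3) follows: the inclusion reduces to (2) applied to each $T_i\subset\bigcup_j T_j$, and when $p=1$ only a single test point appears in the polar condition, so ranging it over $\bigcup_i T_i$ is equivalent to ranging it over each $T_i$ separately. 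For item (4), I would note that, for each fixed test tuple, the map $(x_0,x_0^*)\mapsto\sum_{i=0}^p\langle x_{i+1}-x_i,x_i^*\rangle$ is affine and norm-continuous on $X\times X^*$, so $T^{\mu_p}$ is an intersection of closed half-spaces.

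Item (5) is a direct restatement of the definition: $T\subset T^{\mu_p}$ says that for every $(x_0,x_0^*)\in T$ and every $\{(x_i,x_i^*)\}_{i=1}^p\subset T$ the cyclic sum is nonpositive, which is $p$-cyclic monotonicity. For item (6), the implication ``$T=T^{\mu_p}\Rightarrow T$ maximal $p$-cyclically monotone'' is formal: $T$ is $p$-cyclically monotone by (5), and any $p$-cyclically monotone $T'\supset T$ would satisfy $T'\subset(T')^{\mu_p}\subset T^{\mu_p}=T$ by (2) and (5), forcing $T'=T$.

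The converse implication in (6) is the main obstacle. I would take $(x_0,x_0^*)\in T^{\mu_p}$ and argue that $T\cup\{(x_0,x_0^*)\}$ is still $p$-cyclically monotone, so that maximality forces $(x_0,x_0^*)\in T$ and therefore $T^{\mu_p}\subset T$. The subtlety is that a test $(p+1)$-cycle in $T\cup\{(x_0,x_0^*)\}$ may contain several copies of $(x_0,x_0^*)$, whereas the polar condition only directly controls cycles in which $(x_0,x_0^*)$ appears exactly once. My plan is to split any such cycle at each occurrence of $(x_0,x_0^*)$ into arcs of the form $(x_0,x_0^*),(z_1,z_1^*),\ldots,(z_m,z_m^*)$ with $0\leq m\leq p$, rewrite the total cyclic sum as the sum of the corresponding arc contributions (two consecutive copies of $(x_0,x_0^*)$ contributing zero), and bound each arc contribution by applying the $p$-polar inequality to $(x_0,x_0^*)$ after padding the arc with $p-m$ copies of $(z_m,z_m^*)$, which adds only zero terms. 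Summing the arc inequalities then delivers the required nonpositivity.
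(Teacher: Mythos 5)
The paper does not actually prove this proposition: it is imported wholesale from \cite{BueCot19}, so there is no in-paper argument to compare against. Judged on its own terms, your proposal is essentially correct, and the one genuinely nontrivial point --- the converse direction of item \emph{6} --- is handled properly: splitting a cycle in $T\cup\{(x_0,x_0^*)\}$ at each occurrence of $(x_0,x_0^*)$, observing that each arc's contribution is exactly a cyclic sum based at $(x_0,x_0^*)$ with intermediate points in $T$, and padding with repeated endpoints (which contribute $\langle z_m-z_m,z_m^*\rangle=0$) so that the $p$-polar inequality applies to each arc, is a complete argument; it is the same padding trick that makes item \emph{1} work. One slip to correct: in item \emph{4} the map $(x_0,x_0^*)\mapsto\sum_{i=0}^p\langle x_{i+1}-x_i,x_i^*\rangle$ is \emph{not} affine, since the $i=0$ and $i=p$ terms together contribute $\langle x_1,x_0^*\rangle-\langle x_0,x_0^*\rangle+\langle x_0,x_p^*\rangle$, and the middle summand is a genuinely quadratic (bilinear) term in $(x_0,x_0^*)$; consequently $T^{\mu_p}$ is not an intersection of half-spaces. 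The conclusion survives because the map is still norm-continuous (the duality product is jointly continuous on bounded sets, and in fact $\,(x,x^*)\mapsto\langle x,x^*\rangle$ is norm-continuous on $X\times X^*$), so $T^{\mu_p}$ is an intersection of closed sublevel sets and hence closed; just drop the words ``affine'' and ``half-spaces.''
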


\begin{lemma}\label{lem:recc}
Let $T:X\tos X^*$ be a multivalued operator and let $C=\co(\dom(T))$. Then, for all $x_0\in C\cap\dom(T^{\mu_p})$
\[
0^+T^{\mu_p}(x_0)=N_C(x_0).
\]
\end{lemma}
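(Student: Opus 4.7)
The plan is to exploit the fact that $T^{\mu_p}(x_0)$, as a fiber of the polar, is an intersection of closed half-spaces in $X^*$ (for each fixed tuple $\{(x_i,x_i^*)\}_{i=1}^p \subset T$, the defining inequality is affine in $x_0^*$), hence closed and convex. This lets me compute its recession cone directly from the inequalities that cut it out.

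First I would unpack the definition of the recession cone. A direction $d\in X^*$ lies in $0^+T^{\mu_p}(x_0)$ iff for every $x_0^*\in T^{\mu_p}(x_0)$ (which exists since $x_0\in \dom(T^{\mu_p})$), every $t>0$, and every tuple $\{(x_i,x_i^*)\}_{i=1}^p\subset T$, one has
\[
\inner{x_1-x_0}{x_0^*+td}+\sum_{i=1}^{p}\inner{x_{i+1}-x_i}{x_i^*}\le 0,\quad x_{p+1}=x_0.
\]
Subtracting off the corresponding inequality for $x_0^*$ alone, this reduces to $t\inner{x_1-x_0}{d}\le C(x_0,x_0^*,\{(x_i,x_i^*)\})$ for a quantity independent of $t$, and dividing by $t$ and letting $t\to\infty$ forces $\inner{x_1-x_0}{d}\le 0$ for every $x_1\in\dom(T)$ (completing the tuple with any fixed $(x_1^*,\dots,x_p^*)$ picked from $T$, which is possible because $x_0\in C$ implies $T\ne\emptyset$). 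Since the functional $y\mapsto\inner{y-x_0}{d}$ is affine, the inequality extends to all $y\in\co(\dom T)=C$, giving $d\in N_C(x_0)$. This establishes $0^+T^{\mu_p}(x_0)\subset N_C(x_0)$.

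For the converse, take $d\in N_C(x_0)$. Then $\inner{x_1-x_0}{d}\le 0$ for every $x_1\in\dom(T)\subset C$, so for any $x_0^*\in T^{\mu_p}(x_0)$, $t>0$, and tuple $\{(x_i,x_i^*)\}_{i=1}^p\subset T$, expanding
\[
\inner{x_1-x_0}{x_0^*+td}+\sum_{i=1}^{p}\inner{x_{i+1}-x_i}{x_i^*}
\]
gives the sum for $x_0^*$ (which is $\le 0$ by hypothesis) plus $t\inner{x_1-x_0}{d}\le 0$, so the total is $\le 0$. Thus $x_0^*+td\in T^{\mu_p}(x_0)$, showing $d\in 0^+T^{\mu_p}(x_0)$.

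The only subtlety is the forward direction: one must show that the inequality $\inner{x_1-x_0}{d}\le 0$ may be obtained with $x_1$ ranging over all of $\dom(T)$, which requires the ability to complete the chosen $x_1$ to a full admissible $p$-tuple. This is where I use that $x_0\in C\cap\dom(T^{\mu_p})$ forces $\dom(T)\ne\emptyset$, so one can repeat any chosen pair $(x_1,x_1^*)\in T$ to fill positions $2,\dots,p$. No other obstacle arises.
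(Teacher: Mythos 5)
Your proof is correct and follows essentially the same route as the paper's: the inclusion $N_C(x_0)\subset 0^+T^{\mu_p}(x_0)$ by observing that the perturbation only contributes the nonpositive term $t\inner{x_1-x_0}{n^*}$, and the reverse inclusion by completing an arbitrary $x_1\in\dom(T)$ to an admissible $p$-tuple (the paper uses the constant tuple $(x_1,x_1^*)=\cdots=(x_p,x_p^*)$, which is a special case of your argument) and letting $t\to\infty$. Your explicit remark that the inequality $\inner{y-x_0}{d}\le 0$ extends from $\dom(T)$ to $C$ by affinity is a detail the paper leaves implicit.
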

\begin{proof}
Let $n^*\in N_C(x_0)$ and let $x_0^*\in T^{\mu_p}(x_0)$ and $t>0$. Then, for every $\{(x_i,x_i^*)\}_{i=1}^p\subset T$, considering $x_{p+1}=x_0$,
\begin{multline*}
\inner{x_1-x_0}{x_0^*+tn^*}+\sum_{i=1}^p\inner{x_{i+1}-x_i}{x_i^*}\\\leq \inner{x_1-x_0}{x_0^*}+\sum_{i=1}^p\inner{x_{i+1}-x_i}{x_i^*}\leq 0.
\end{multline*}
Hence $n^*\in 0^+T^{\mu_p}(x_0)$.
Conversely, consider $n^*\in 0^+T^{\mu_p}(x_0)$, that is, $x_0^*+tn^*\in T^{\mu_p}(x_0)$, for all $x_0^*\in T^{\mu_p}(x_0)$ and $t>0$. Take $x_1\in \dom(T)$, $x_1^*\in T(x_1)$, and let $x_1=x_2=\cdots=x_p$, $x_1^*=x_2^*=\cdots=x_p^*$, thus
\begin{align*}
0&\geq \inner{x_1-x_0}{x_0^*+tn^*}+\sum_{i=1}^{p-1}\inner{x_{i+1}-x_i}{x_i^*}+\inner{x_0-x_p}{x_p^*}\\
&=\inner{x_1-x_0}{x_0^*+tn^*}+\inner{x_0-x_1}{x_1^*}\\
&=\inner{x_1-x_0}{x_0-x_1^*}+t\inner{x_1-x_0}{n^*}.
\end{align*}
If for some $x_1$, $\inner{x_1-x_0}{n^*}>0$, then taking $t\to+\infty$ in the previous inequality would lead to a contradiction. Therefore $\inner{x_1-x_0}{n^*}\leq 0$, for all $x_1\in \dom(T)$, and this implies that $n^*\in N_C(x_0)$.
\end{proof}

\begin{proposition}\label{pro:copolar}
Let $T:X\tos X^*$ be a multivalued operator, and let $C=\co(\dom(T))$. Then
\[
T^{\mu_p}|_{C}\subset [T_{\co}+N_C]^{\mu_p}\subset [T_{\co}+N]^{\mu_p}\subset  [T_{\co}]^{\mu_p}\subset T^{\mu_p}.
\]
where $N$ is any operator such that $N\subset N_C$. In particular, for every $x\in C$,
\[
[T_{\co}+N_C]^{\mu_p}(x)=[T_{\co}+N]^{\mu_p}(x)=[T_{\co}]^{\mu_p}(x)=T^{\mu_p}(x).
\]
\end{proposition}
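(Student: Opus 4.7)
The plan breaks into three pieces. First, the three rightmost inclusions are routine consequences of Proposition~\ref{pro:pmono}(2) applied to simple set containments: since $T\subset T_{\co}$ we get $[T_{\co}]^{\mu_p}\subset T^{\mu_p}$; since $N\subset N_C$ we have $T_{\co}+N\subset T_{\co}+N_C$, yielding $[T_{\co}+N_C]^{\mu_p}\subset [T_{\co}+N]^{\mu_p}$; and since $0\in N(x)\subset N_C(x)$ for every $x\in\dom T$, we have $T_{\co}\subset T_{\co}+N$, giving $[T_{\co}+N]^{\mu_p}\subset [T_{\co}]^{\mu_p}$.

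The crux is the leftmost inclusion $T^{\mu_p}|_C\subset [T_{\co}+N_C]^{\mu_p}$. Fix $(x_0,x_0^*)\in T^{\mu_p}$ with $x_0\in C$ and take an arbitrary chain $\{(x_i,v_i^*)\}_{i=1}^p\subset T_{\co}+N_C$ with $x_{p+1}=x_0$. For each $i\geq 1$ decompose $v_i^*=w_i^*+n_i^*$ with $w_i^*\in \co(T(x_i))$ and $n_i^*\in N_C(x_i)$, and further write $w_i^*=\sum_{j=1}^{k_i}\lambda_{i,j}u_{i,j}^*$ as a convex combination of elements $u_{i,j}^*\in T(x_i)$ with $\lambda_{i,j}\geq 0$ and $\sum_j\lambda_{i,j}=1$. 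By construction $x_1,\dots,x_p\in\dom T\subset C$, and $x_0\in C$ by assumption.

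The averaging trick is to apply the $\mu_p$-inequality for $x_0^*$ to every ``single-selection'' chain $\{(x_i,u_{i,j_i}^*)\}_{i=1}^p\subset T$ and then take the convex combination with weights $\prod_{k=1}^p\lambda_{k,j_k}$. Since $\prod_k(\sum_{j_k}\lambda_{k,j_k})=1$ the term $\langle x_1-x_0,x_0^*\rangle$ is preserved, and for each fixed $i$ marginalizing over all $j_k$ with $k\neq i$ collapses $\sum_{j_i}\lambda_{i,j_i}u_{i,j_i}^*$ to $w_i^*$. This yields
\[
\langle x_1-x_0,x_0^*\rangle+\sum_{i=1}^p\langle x_{i+1}-x_i,w_i^*\rangle\leq 0.
\]
For the normal-cone part, $x_{i+1}\in C$ for every $i=1,\dots,p$ (using $x_{p+1}=x_0\in C$), so $n_i^*\in N_C(x_i)$ gives $\langle x_{i+1}-x_i,n_i^*\rangle\leq 0$; adding these nonpositive terms produces the required $p$-cyclic inequality for the full chain $\{(x_i,v_i^*)\}$, proving $(x_0,x_0^*)\in [T_{\co}+N_C]^{\mu_p}$. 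The ``in particular'' equality on $C$ is then immediate, since for $x\in C$ the first and last terms of the chain both equal $T^{\mu_p}(x)$, forcing equality throughout.

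The main obstacle is the convex-combination bookkeeping in the averaging step; this is conceptually elementary but must be arranged with care so that the marginals collapse exactly to the $w_i^*$'s.
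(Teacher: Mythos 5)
Your proof is correct and takes essentially the same route as the paper: the three rightmost inclusions via the antitonicity of the $p$-polar applied to $T\subset T_{\co}\subset T_{\co}+N\subset T_{\co}+N_C$, and the leftmost inclusion via exactly the same multilinear averaging over single-selection chains in $T$, combined with discarding the normal-cone terms using $\langle x_{i+1}-x_i,n_i^*\rangle\leq 0$ (which is where the hypothesis $x_0\in C$ enters, for the $i=p$ term). No gaps.
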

\begin{proof}
Since $T\subset T_{\co}\subset T_{\co}+N\subset T_{\co}+N_C$, from Proposition~\ref{pro:pmono}, item 2,
\[
[T_{\co}+N_C]^{\mu_p}\subset [T_{\co}+N]^{\mu_p}\subset [T_{\co}]^{\mu_p}\subset T^{\mu_p}.
\]
Now, given any $(z_0,z_0^*)\in T^{\mu_p}$, with $z_0\in C$, we aim to prove $(z_0,z_0^*)\in (T_{\co}+N_C)^{\mu_p}$. Take $\{(z_j,z_j^*+n_j^*)\}_{j=1}^p\subset (T_{\co}+N_C)$, that is, 
\[
n_j^*\in N_C(z_j)\text{ and }z_j^*=\ds\sum_{l^j=1}^{r_j}\lambda_{j,l^j}x_{j,l^j}^*, \forall\, j=1,\ldots,p,
\]
where $x_{j,l^j}^*\in T(z_j)$, $\lambda_{j,l^j}\geq 0$, for all $l^j=1,\ldots,r_j$, and $\ds\sum_{l^j=1}^{r_j}\lambda_{j,l^j}=1$. Therefore, considering $n_0^*=0$,
\begin{align*}
 \sum_{j=0}^p\inner{z_{j+1}-z_j}{z_j^*+n_j^*}
& \leq \sum_{j=0}^p\inner{z_{j+1}-z_j}{z_j^*}=\sum_{j=0}^p\inner{z_{j+1}-z_j}{\sum_{l^j=1}^{r_j}\lambda_{j,l^j}x_{j,l^j}^*}\\
 &=\sum_{l^0=1}^{r_1}\cdots\sum_{l^p=1}^{r_p}\lambda_{0,l^0}\cdots\lambda_{p,l^p}\sum_{j=0}^p\inner{z_{j+1}-z_j}{x_{j,l^j}^*},
\end{align*}
where the first inequality holds since $z_0\in C$ and $\inner{z_0-z_p}{z_p^*+n_p^*}\leq \inner{z_0-z_p}{z_p^*}$.

Note that the sum $\ds\sum_{j=0}^p\inner{z_{j+1}-z_j}{x_{j,l^j}^*}\leq 0$, for every fixed choice of $l^0,l^1,\ldots,l^p$. Therefore, $T^{\mu_p}|_C\subset [T_{\co}+N_C]^{\mu_p}$ and the first part of the proposition follows. The second part follows immediately.
\end{proof}

\begin{lemma}\label{lem:orto}
Let $\mathcal{F}=\{(w_i,w_i^*)\}_{i=1}^n$ be a $p$-cyclically monotone operator, and consider the operator
\[
S=\bigcup_{i=1}^n\co\{(w_i,w_i^*),(w_{i+1},w_{i+1}^*)\},
\]
where $(w_{n+1},w_{n+1}^*)=(w_1,w_1^*)$. 
If $\inner{w_i-w_{i+1}}{w_i^*-w_{i+1}^*}=0$ then $S$ is $p$-cyclically monotone and $S^{\mu_p}=\mathcal{F}^{\mu_p}$.
\end{lemma}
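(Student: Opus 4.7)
The plan is to deduce both claims (the $p$-cyclical monotonicity of $S$ and the equality $S^{\mu_p}=\mathcal{F}^{\mu_p}$) from a single ``multi-affinity'' computation driven by the perpendicularity hypothesis. One inclusion, $S^{\mu_p}\subset\mathcal{F}^{\mu_p}$, is free: since $\mathcal{F}\subset S$, Proposition~\ref{pro:pmono}(2) gives it directly.

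For the remaining work, I consider any candidate cycle whose entries lie on segments of $S$ and parameterize each such entry as
\[
(x_j,x_j^*)=(1-t_j)(w_{i_j},w_{i_j}^*)+t_j(w_{i_j+1},w_{i_j+1}^*),\qquad t_j\in[0,1],
\]
setting $v_j=w_{i_j+1}-w_{i_j}$ and $v_j^*=w_{i_j+1}^*-w_{i_j}^*$. Let $\Phi(t):=\sum_{j=0}^p\inner{x_{j+1}-x_j}{x_j^*}$. The parameter $t_j$ enters $\Phi$ through exactly two summands: $\inner{x_{j+1}-x_j}{x_j^*}$ (quadratically, since both slots depend on $t_j$) and $\inner{x_j-x_{j-1}}{x_{j-1}^*}$ (linearly, through the first slot only). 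A short calculation isolates the second-order coefficient and yields $\partial^2\Phi/\partial t_j^2=-2\inner{v_j}{v_j^*}$, which vanishes by the perpendicularity hypothesis. Hence $\Phi$ is affine in each $t_j$ separately, and iterating endpoint selection shows that the maximum of $\Phi$ over the parameter cube is attained at a vertex; at such a vertex every parameterized $(x_j,x_j^*)$ equals some $(w_i,w_i^*)\in\mathcal{F}$.

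Both remaining assertions now follow at once. For $\mathcal{F}^{\mu_p}\subset S^{\mu_p}$, fix $(x_0,x_0^*)\in\mathcal{F}^{\mu_p}$ and any $\{(x_j,x_j^*)\}_{j=1}^p\subset S$, then parameterize $t_1,\ldots,t_p$: at any vertex, $(x_j,x_j^*)\in\mathcal{F}$ for every $j\geq1$, so $\Phi\leq0$ by definition of $\mathcal{F}^{\mu_p}$, and multi-affinity propagates this bound to the whole cube. For the $p$-cyclical monotonicity of $S$, take $\{(x_j,x_j^*)\}_{j=0}^p\subset S$ and parameterize all of $t_0,\ldots,t_p$; at any vertex every $(x_j,x_j^*)\in\mathcal{F}$, hence $\Phi\leq0$ by the $p$-cyclical monotonicity of $\mathcal{F}$. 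Combining with the free inclusion gives $S^{\mu_p}=\mathcal{F}^{\mu_p}$. The only real technical hurdle is the derivative bookkeeping that isolates $-2\inner{v_j}{v_j^*}$ as the surviving second-order coefficient; once this is in hand, the rest of the argument is automatic.
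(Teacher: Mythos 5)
Your proposal is correct and follows essentially the same route as the paper: the perpendicularity hypothesis makes the cyclic sum affine in each segment parameter separately (your $\partial^2\Phi/\partial t_j^2=-2\inner{v_j}{v_j^*}=0$ is exactly the content of the paper's computation that $\inner{a_i(t)}{a_i^*(t)}$ is affine), and iterated endpoint maximization over the parameter cube reduces every cycle in $S$ to a cycle in $\mathcal{F}$, yielding both the $p$-cyclic monotonicity of $S$ and $\mathcal{F}^{\mu_p}\subset S^{\mu_p}$, with the reverse inclusion free from $\mathcal{F}\subset S$.
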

\begin{proof}
Define $(a_i(t),a_i^*(t))=(1-t)(w_i,w_i^*)+t(w_{i+1},w_{i+1}^*)$, for $t\in[0,1]$ and $i=1,\ldots,n$. Thus, $S$ can be rewritten as
\[
S=\bigcup_{i=1}^n\{(a_i(t),a_i^*(t))\::\:t\in[0,1]\}.
\]
Note that, for fixed $i$ and $t$, since $\inner{w_i-w_{i+1}}{w_i^*-w_{i+1}^*}=0$,
\begin{equation}\label{eq:Baff}
\inner{a_i(t)}{a_i^*(t)}
=\inner{w_i}{w_i^*}+(\inner{w_i}{w_{i+1}^*}-2\inner{w_i}{w_i^*}+\inner{w_{i+1}}{w_i^*})t
\end{equation}
which is affine. 

To prove that $S$ is $p$-cyclically monotone we need to verify that
\[
\sup_{\{(z_j,z_j^*)\}_{j=0}^p\subset S}\sum_{j=0}^p\inner{z_{j+1}-z_j}{z_j^*}\leq 0,
\]
which is equivalent to
\[
\hat\mu=\sup\left\{\sum_{j=0}^p\inner{a_{i_{j+1}}(t_{j+1})-a_{i_j}(t_j)}{a_{i_j}^*(t_j)}\::\:i_j\in\{1,\ldots,n\},t_j\in[0,1]\right\}\leq 0,
\]
Since $S$ is compact, the above supremum is attained, so there exist $\hat i_j\in\{1,\ldots,n\}$, $\hat t_j\in[0,1]$, $j=0,\ldots,p$, such that
\[
\hat\mu=\sum_{j=0}^p\inner{a_{\hat i_{j+1}}(\hat t_{j+1})-a_{\hat i_j}(\hat t_j)}{a_{\hat i_j}^*(\hat t_j)}
\]
Fixing $\hat i_0,\ldots \hat i_p$, because of~\eqref{eq:Baff}, the function
\[
\mu(t_0,\ldots,t_p)=\sum_{j=0}^p\inner{a_{\hat i_{j+1}}(t_{j+1})-a_{\hat i_j}(t_j)}{a_{\hat i_j}^*(t_j)}
\]
is affine on each $t_j$ separately. This implies that in
\begin{align*}
\hat\mu&=\max_{t_0\in[0,1]}\cdots\max_{t_p\in[0,1]}\mu(t_0,\ldots,t_p)\\
&=\max_{t_0\in[0,1]}\cdots\max_{t_p\in[0,1]}\sum_{j=0}^p\inner{a_{\hat i_{j+1}}(t_{j+1})-a_{\hat i_j}(t_j)}{a_{\hat i_j}^*(t_j)}.
\end{align*}
each maximization is attained when either $t_j=0$ or $t_j=1$. Therefore $(a_{\hat i_j}(\hat t_j),a_{\hat i_j}^*(\hat t_j))\in \mathcal{F}$, for all $j=0,\ldots, p$ and thus, $\hat\mu\leq 0$, since $\mathcal{F}$ is $p$-cyclically monotone by hypothesis.

Let $(z_0,z_0^*)\in \mathcal{F}^{\mu_p}$ and define
\begin{multline*}
\tilde\mu=\sup_{\{(z_j,z_j^*)\}_{j=1}^p\subset S}\sum_{j=0}^p\inner{z_{j+1}-z_j}{z_j^*}\\
=\sup\bigg\{\inner{a_{i_1}(t_1)-z_0}{z_0^*}+
\sum_{j=1}^{p-1}\inner{a_{i_{j+1}}(t_{j+1})-a_{i_j}(t_j)}{a_{i_j}^*(t_j)}\\
+\inner{z_0-a_{i_p}(t_p)}{a_{i_p}^*(t_p)}\::\:i_j\in\{1,\ldots,n\},t_j\in[0,1]\bigg\}
\end{multline*}
In the same way as before, the previous supremum is attained, and the values of $t_j$ which maximize such expresion are either 0 or 1. Therefore, for some $\{(y_i,y_i^*)\}_{i=1}^p\subset \mathcal{F}$, 
\[
\tilde\mu=\sum_{j=0}^p\inner{y_{j+1}-y_j}{y_j^*},
\]
where $(y_0,y_0^*)=(z_0,z_0^*)$. Since $(z_0,z_0^*)\in \mathcal{F}^{\mu_p}$, we obtain $\tilde\mu\leq 0$, and the lemma follows.
\end{proof}

\begin{lemma}\label{lem:algop}
Let $S$ be a $p$-cyclically monotone operator, $a\in\dom(S^{\mu_p})$ and consider $T=(\{a\}\times S^{\mu_p}(a))\cup S$. Then, $T$ is also $p$-cyclically monotone.
\end{lemma}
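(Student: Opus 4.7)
The plan is to take an arbitrary $(p+1)$-cycle $\{(x_i,x_i^*)\}_{i=0}^p \subset T$ with $x_{p+1}=x_0$ and prove
\[
\Sigma := \sum_{i=0}^p \langle x_{i+1}-x_i, x_i^*\rangle \leq 0.
\]
A useful preliminary observation is that, since $S$ is $p$-cyclically monotone, item~5 of Proposition~\ref{pro:pmono} gives $S\subset S^{\mu_p}$; consequently $T(a)=S^{\mu_p}(a)$, while $T(x)=S(x)$ for every $x\neq a$. In particular, whenever $x_i=a$ the associated $x_i^*$ automatically lies in $S^{\mu_p}(a)$, regardless of whether the pair comes from $S$ or from the added part $\{a\}\times S^{\mu_p}(a)$.

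I would then split on the set $A=\{i : x_i=a\}$. The extreme cases are immediate: if $A=\emptyset$, then every $(x_i,x_i^*)\in S$ and $\Sigma\leq 0$ by $p$-cyclic monotonicity of $S$; if $A=\{0,\dots,p\}$, all increments vanish and $\Sigma=0$. In the mixed case, a cyclic rotation (which preserves $\Sigma$) lets me assume $0\in A$. Enumerate $A$ as $0=i_0<i_1<\cdots<i_{m-1}$ and set $i_m=p+1$; then decompose $\Sigma=\sum_{j=0}^{m-1}\sigma_j$, where $\sigma_j=\sum_{i=i_j}^{i_{j+1}-1}\langle x_{i+1}-x_i,x_i^*\rangle$ runs along a sub-cycle from $a$ back to $a$ through $q_j:=i_{j+1}-i_j-1$ intermediate points, each of which is non-$a$ and therefore lies in $S$. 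When $q_j=0$ trivially $\sigma_j=0$.

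The key step is to handle $1\leq q_j\leq p$ via a padding argument. I would put $y_k=x_{i_j+k}$ and $y_k^*=x_{i_j+k}^*$ for $k=1,\dots,q_j$, and $y_k=y_{q_j}$, $y_k^*=y_{q_j}^*$ for $k=q_j+1,\dots,p$. All $p$ pairs $(y_k,y_k^*)$ lie in $S$, and the artificial increments $y_{k+1}-y_k$ for $k\geq q_j$ vanish, so a direct computation yields
\[
\langle y_1-a, x_{i_j}^*\rangle + \sum_{k=1}^{p-1}\langle y_{k+1}-y_k, y_k^*\rangle + \langle a-y_p, y_p^*\rangle = \sigma_j.
\]
Since $x_{i_j}^*\in S^{\mu_p}(a)$, the left-hand side is $\leq 0$; hence $\sigma_j\leq 0$, and summing over $j$ delivers $\Sigma\leq 0$.

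The only delicate point is the padding bookkeeping: one must check that the inserted repetitions truly contribute nothing to the test inequality, and that $q_j\leq p$ always holds (the extreme value $q_j=p$ occurs precisely when $m=1$, in which case no padding is needed and the definition of $S^{\mu_p}(a)$ applies directly to the whole cycle). Once that is verified, the argument is a clean case analysis combined with one invocation of the defining inequality of $S^{\mu_p}(a)$ per segment.
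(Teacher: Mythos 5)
Your proof is correct, and the underlying idea---cutting the cycle at the occurrences of $a$---is the same one the paper exploits, but the execution is genuinely different. The paper argues by strong induction on the cyclic order $r\leq p$: the base case $r=1$ (monotonicity) is a three-way case analysis, and in the inductive step a cycle containing a second occurrence of $a$ at position $k$ is split into just two sub-cycles, each disposed of by the inductive hypothesis, while the sub-case with a single occurrence of $a$ is settled via the inclusion $S^{\mu_p}\subset S^{\mu_r}$ from Proposition~\ref{pro:pmono}. You instead decompose the $(p+1)$-cycle in one pass into all of its maximal $a$-to-$a$ segments and promote each segment to a genuine $p$-cycle by repeating its last point, so that the defining inequality of $S^{\mu_p}(a)$ applies verbatim. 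This buys a self-contained, induction-free argument that needs neither the monotone base case nor the nesting of the polars (the padding re-proves the relevant instance of $S^{\mu_p}\subset S^{\mu_{q_j}}$ on the spot); the price is the index bookkeeping you flag, which you have handled correctly: $\sum_j(q_j+1)=p+1$ forces $q_j\leq p$, the padded increments vanish, and the closing term $\langle a-y_p,y_p^*\rangle=\langle a-y_{q_j},y_{q_j}^*\rangle$ makes the padded sum reproduce $\sigma_j$ exactly. Your preliminary observation that $T(a)=S^{\mu_p}(a)$ because $S\subset S^{\mu_p}$ is also the right way to justify testing every pair sitting over $a$ against $S^{\mu_p}(a)$, a point the paper leaves implicit.
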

\begin{proof}
First, we will prove that $T$ is monotone. Indeed, given $(b,b^*),(c,c^*)\in T$, let $M=\inner{b-c}{b^*-c^*}$,
\begin{itemize}
 \item if both $b=c=a$ then $M=0$,
 \item if $b=a$ and $c\neq a$, then $M\geq 0$, since $(c,c^*)\in S$ and $(a,b^*)\in S^{\mu_p}$,
 \item if $b,c\neq a$ then $M\geq 0$, since $S$ is $p$-cyclically monotone.
\end{itemize}
Now let $1<r\leq p$. Assume that $T$ is $j$-cyclically monotone, for every $1\leq j< r$. We aim to prove that $T$ is also $r$-cyclically monotone.

Take $\{(z_i,z_i^*)\}_{i=0}^r\subset T$. If $\{(z_i,z_i^*)\}_{i=0}^r\subset S$, then we are done. Otherwise, without loss of generality we may assume that $z_0=a$, so $z_0^*\in S^{\mu_p}(a)$. If the remaining $z_i$, $i=1,\ldots,r$ are different from $a$ then 
\[
\sum_{i=0}^r\inner{z_{i+1}-z_i}{z_i^*}\leq 0
\]
since $(z_0,z_0^*)\in S^{\mu_p}\subset S^{\mu_r}$. Otherwise, there exists $k\neq 0$ such that $z_{k}=a$, so the sum 
\[
\sum_{i=0}^{k-1}\inner{z_{i+1}-z_i}{z_i^*}+\sum_{i=k}^{r}\inner{z_{i+1}-z_i}{z_i^*}\leq 0
\]
since both sums are associated to the finite sets $\{(z_i,z_i^*)\}_{i=0}^{k-1}$ and $\{(z_i,z_i^*)\}_{i=k}^{p}$ and $T$ is already assumed to be $(k-1)$ and $(r-k)$-cyclically monotone.
\end{proof}

The following results deal with finite operators. 
\begin{proposition}\label{pro:opfinite}
Let $\mathcal{F}:X\tos X^*$ be a finite multivalued operator. Given $p\geq 2$ and $z_0\in\dom(\mathcal{F}^{\mu_p})$, the set $\mathcal{F}^{\mu_p}(z_0)$ is a polyhedron and, given $z_0^*\in \mathcal{F}^{\mu_p}(z_0)$, it is defined by either 
\begin{enumerate}
\item at most $\#\dom(\mathcal{F})\cdot\#\ran(\mathcal{F})$ inequalities, each one linear in terms of either $z_0$ or $z_0^*$; or
\item at most $\#\dom(\mathcal{F})$ inequalities, each one linear in terms of $z_0^*$. 
\end{enumerate}
\end{proposition}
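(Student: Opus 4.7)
The plan is to unwind the definition of $\mathcal{F}^{\mu_p}$ directly. A pair $(z_0,z_0^*)$ lies in $\mathcal{F}^{\mu_p}$ if and only if the inequality $\sum_{i=0}^p\inner{x_{i+1}-x_i}{x_i^*}\le 0$ holds for every $\{(x_i,x_i^*)\}_{i=1}^p\subset\mathcal{F}$, where one sets $x_0=z_0$, $x_0^*=z_0^*$ and $x_{p+1}=z_0$. Only the boundary summands (at $i=0$ and $i=p$) involve $z_0$ or $z_0^*$; after isolating these terms the inequality can be rewritten as
\[
\inner{x_1-z_0}{z_0^*}+\inner{z_0}{x_p^*}+\gamma\le 0,\qquad \gamma:=\sum_{i=1}^{p-1}\inner{x_{i+1}-x_i}{x_i^*}-\inner{x_p}{x_p^*},
\]
so the dependence on $(z_0,z_0^*)$ is concentrated in the first two terms, while the scalar $\gamma$ depends on $\{(x_i,x_i^*)\}_{i=1}^p$ but not on $(z_0,z_0^*)$. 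Both assertions of the proposition will essentially fall out of this rewriting.

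For item~1, I would observe that the displayed inequality depends on the chosen $p$-tuple only through the triple $(x_1,x_p^*,\gamma)$. Hence, grouping the constraints by the pair $(u,v^*):=(x_1,x_p^*)\in\dom(\mathcal{F})\times\ran(\mathcal{F})$, all constraints with prescribed endpoints collapse to the single binding one, namely that attaining $\max\gamma$ over admissible completions $(x_1^*,x_2,x_2^*,\ldots,x_{p-1}^*,x_p)$ in $\mathcal{F}$; finiteness of $\mathcal{F}$ ensures the maximum is reached. This leaves at most $\#\dom(\mathcal{F})\cdot\#\ran(\mathcal{F})$ inequalities, each of the form
\[
\inner{z_0}{v^*-z_0^*}+\inner{u}{z_0^*}+\gamma_{\max}(u,v^*)\le 0,
\]
which is affine in $z_0$ once $z_0^*$ is fixed, and affine in $z_0^*$ once $z_0$ is fixed.

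For item~2, I would fix $z_0$ and note that the coefficient of $z_0^*$ in the rewritten inequality is exactly $x_1-z_0$, depending only on $x_1$. Consequently, all constraints sharing the same value $x_1=u$ share the same linear form $\inner{u-z_0}{z_0^*}$ on the left, while their right-hand sides $-\inner{z_0}{x_p^*}-\gamma$ can again be collapsed to a single binding one by taking a maximum over the admissible completions in $\mathcal{F}$. This produces at most $\#\dom(\mathcal{F})$ half-spaces in $X^*$, each linear in $z_0^*$, whose intersection is precisely $\mathcal{F}^{\mu_p}(z_0)$; in particular this set is a polyhedron.

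There is no real obstacle beyond keeping careful track of indices; the only point that needs checking is that each of the maxima over admissible completions is attained, which is automatic from the finiteness of $\mathcal{F}$.
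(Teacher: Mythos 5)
Your proposal is correct and follows essentially the same route as the paper: you isolate the boundary terms of the cyclic sum, observe that each constraint depends on the $p$-tuple only through $(x_1,x_p^*)$ and a scalar, and collapse each group to its binding inequality by maximizing over the interior completions (the paper's $\widetilde{N}(z_1,z_p^*)$) and then, for item~2, over $x_p^*$ as well (the paper's $\widetilde{M}(z_0,z_1)$). The only difference is cosmetic bookkeeping in the constant term ($\gamma$ versus the paper's $N(Z)$, which differ by $\inner{x_1}{x_p^*}$).
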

\begin{proof}
By definition, $z_0^*\in \mathcal{F}^{\mu_p}(z_0)$ if, and only if,
\begin{equation}\label{eq:polarf}
\sum_{i=0}^p\inner{z_{i+1}-z_i}{z_i^*}\leq 0,\,\forall\{(z_i,z_i^*)\}_{i=1}^p\subset \mathcal{F},
\end{equation}
considering $z_{p+1}=z_0\in C$.  This implies that $\mathcal{F}^{\mu_p}(z_0)$ is a polyhedron, as it is a finite intersection of half-spaces.
Let $Z=\{(z_i,z_i^*)\}_{i=1}^p\subset \mathcal{F}$. Note that the inequality in~\eqref{eq:polarf} is equivalent to
\begin{equation}\label{eq:polarf2}
N(Z)\leq \inner{z_0-z_1}{z_0^*-z_p^*}.
\end{equation}
where $N(Z)=\ds\sum_{i=1}^{p-1}\inner{z_{i+1}-z_i}{z_i^*}+\inner{z_1-z_p}{z_p^*}$.
Denote $\widetilde{N}(z_1,z_p^*)$ as the maximum over all the points $(z_2,z_2^*),\ldots,(z_{p-1},z_{p-1}^*)\in\mathcal{F}$ and all $z_1^*\in \mathcal{F}(z_1)$, $z_p\in \mathcal{F}^{-1}(z_p^*)$. Thus, 
\begin{equation}\label{eq:ntilde}
 \widetilde{N}(z_1,z_p^*):=\max\left\{N(Z)\::\:\begin{array}{c}\{(z_i,z_i^*)\}_{i=2}^{p-1}\subset\mathcal{F}\\ z_1^*\in \mathcal{F}(z_1),\,z_p\in \mathcal{F}^{-1}(z_p^*)\end{array}\right\}
\end{equation}
and from \eqref{eq:polarf2} we deduce
\[
\widetilde{N}(z_1,z_p^*)\leq \inner{z_0-z_1}{z_0^*-z_p^*}.
\]
This is equivalent to
\begin{equation}\label{eq:eqred}
\inner{z_0-z_1}{z_0^*}\geq \widetilde{N}(z_1,z_p^*)+\inner{z_0-z_1}{z_p^*}.
\end{equation}
Note that the last inequality only depends on $z_0$, $z_1$ and $z_p^*$. Therefore, for fixed $z_0$, any inequality generated by an arbitrary choice of $\{(z_i,z_i^*)\}_{i=1}^p$ would be exactly as, or implied by, inequality~\eqref{eq:eqred}. Item {\it 1} of the proposition follows by observing that there are at most $\#\dom(\mathcal{F})$ choices of $z_1$ and $\#\ran(\mathcal{F})$ choices of $z_p^*$. Note that inequality~\eqref{eq:eqred} is linear in terms of both $z_0$ and $z_0^*$ separately. 

Finally, in the same way as before, let $\widetilde{M}(z_0,z_1)$ be the maximum of~\eqref{eq:eqred} over all possible $z_p^*\in\ran(\mathcal{F})$, that is, 
\begin{equation}\label{eq:mtilde}
 \widetilde{M}(z_0,z_1)=\max\{\widetilde{N}(z_1,z_p^*)+\inner{z_0-z_1}{z_p^*}\::\: z_p^*\in \ran(\mathcal{F})\}.
\end{equation}
Therefore~\eqref{eq:eqred} is exactly as, or implied by, 
\begin{equation}\label{eq:eqred2}
\widetilde{M}(z_0,z_1)\leq \inner{z_0-z_1}{z_0^*}.
\end{equation}
Item {\it 2} follows by observing that there are at most $\#\dom(\mathcal{F})$ choices for $z_1$ in~\eqref{eq:eqred2} and that~\eqref{eq:eqred2} is linear in terms of $z_0^*$.
\end{proof}
\begin{remark}
When the elements of $\mathcal{F}$ have small integer components, the quantities $\widetilde{N}(z_1,z_p^*)$ and $\widetilde{M}(z_0,z_1)$ can be exactly computed using any programming language. See Section~\ref{sec:tools} for such an implementation. 
\end{remark}
%
%
%
%
\begin{lemma}[Farkas]
Let $A$ be a $m\times n$ matrix and $b\in\R^n$. The system $Ax\geq b$ if, and only if, 
\[
\forall\, y\geq 0,\, A^{\top}y=0 \Longrightarrow \inner{b}{y}\leq 0.
\]
\end{lemma}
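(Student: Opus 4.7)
The plan is to prove both directions of the equivalence, interpreting the statement as: \emph{the system $Ax\geq b$ is consistent if and only if, for every $y\geq 0$ with $A^\top y=0$, one has $\inner{b}{y}\leq 0$.} The forward (``only if'') direction is a direct computation: assuming $\bar x\in\R^n$ satisfies $A\bar x\geq b$ and picking any $y\geq 0$ with $A^\top y=0$, componentwise nonnegativity gives $\inner{b}{y}\leq \inner{A\bar x}{y}=\inner{\bar x}{A^\top y}=0$.

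For the converse, I would argue by contrapositive using a separation argument. Suppose the system $Ax\geq b$ has no solution, and consider the set
\[
K=\{Ax-u \::\: x\in\R^n,\ u\geq 0\}=A(\R^n)+(-\R^m_+)\subset\R^m.
\]
Then $K$ is a convex cone, and the infeasibility of $Ax\geq b$ means precisely that $b\notin K$. Provided $K$ is closed, a strict separation theorem produces $y\in\R^m$ with $\inner{y}{z}\leq 0$ for every $z\in K$ and $\inner{y}{b}>0$. Choosing $u=0$ and letting $x$ range over $\R^n$ yields $\inner{A^\top y}{x}\leq 0$ for all $x$, forcing $A^\top y=0$; choosing $x=0$ and letting $u$ range over $\R^m_+$ yields $\inner{y}{u}\geq 0$ for all $u\geq 0$, forcing $y\geq 0$. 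Combined with $\inner{b}{y}>0$, this contradicts the hypothesis.

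The main obstacle is showing that $K$ is closed, since the usual separation theorems require this. The clean way is to observe that $K$ is the image of the polyhedral cone $\R^n\times\R^m_+$ under the linear map $(x,u)\mapsto Ax-u$, and that the linear image of a polyhedral cone is again polyhedral, hence closed. Equivalently, $K$ can be written as a finitely generated convex cone (generated by $\pm Ae_1,\dots,\pm Ae_n,-e_1,\dots,-e_m$, where $e_i$ are the canonical basis vectors), and any finitely generated convex cone in a finite-dimensional space is closed (the Minkowski--Weyl theorem). Either fact may be invoked as a black box.

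With closedness in hand, the rest of the converse is a single application of the separation theorem for a closed convex set and an exterior point, so the entire argument reduces to the geometric picture in paragraph two plus the polyhedral closedness result in paragraph three. I would present the proof in exactly this order: easy direction, setup of $K$ and separation, then the closedness remark.
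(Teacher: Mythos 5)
Your proposal is correct, but note that the paper itself offers no proof of this lemma: it is stated as a classical result (a Farkas/Gale-type theorem of the alternative) and used as a black box in Corollary 2.7, so there is no in-paper argument to compare against. Your two directions are both sound: the easy direction is the right one-line computation, and the converse correctly reduces to separating $b$ from the cone $K=A(\R^n)-\R^m_+$ and then reading off $A^{\top}y=0$ and $y\geq 0$ from the two slices $u=0$ and $x=0$. You also correctly identify the only genuinely delicate point, the closedness of $K$, and the justification you give (a finitely generated cone is polyhedral, hence closed, by Minkowski--Weyl) is the standard and legitimate way to discharge it; without that remark the separation step would be unjustified, so it is good that you flagged it explicitly. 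One cosmetic point: the paper's statement has a dimension typo ($b$ should live in $\R^m$, not $\R^n$, for $Ax\geq b$ and $\inner{b}{y}$ to make sense), and your proof implicitly uses the corrected reading, which is the intended one.
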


\begin{corollary}\label{cor:mtilde}
Let $\mathcal{F}$ and $\widetilde{M}$ defined as in the Proposition~\ref{pro:opfinite}, and assume $\dom(\mathcal{F})=\{x_1,\ldots,x_n\}$. Then $z_0\in\dom(\mathcal{F}^{\mu_p})$ if, and only if, for all $\lambda_1,\ldots,\lambda_n\geq 0$, such that $\ds\sum_{i=1}^n\lambda_i=1$ and $z_0=\ds\sum_{i=1}^n\lambda_ix_i$,
\[
\sum_{i=1}^n\lambda_i\widetilde{M}(z_0,x_i)\leq 0.
\]
\end{corollary}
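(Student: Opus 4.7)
The plan is to deduce the corollary directly from Farkas' Lemma, applied to the finite linear system characterising $\mathcal{F}^{\mu_p}(z_0)$ via item 2 of Proposition~\ref{pro:opfinite}. The starting point is that $z_0 \in \dom(\mathcal{F}^{\mu_p})$ precisely when there exists $z_0^* \in X^*$ satisfying
\[
\inner{z_0 - x_i}{z_0^*} \geq \widetilde{M}(z_0, x_i), \qquad i = 1, \ldots, n.
\]
Only the behaviour of $z_0^*$ on the finite-dimensional subspace $\linear\{z_0 - x_1, \ldots, z_0 - x_n\}$ is constrained by these inequalities, and any linear functional on that subspace extends to $X^*$ by Hahn--Banach, so the existence question becomes solvability in finite dimensions of $A z_0^* \geq b$, where the rows of $A$ are $z_0 - x_i$ and $b_i = \widetilde{M}(z_0, x_i)$.

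I would then invoke Farkas' Lemma in the form stated above: the system is solvable if and only if every $y = (y_1, \ldots, y_n) \geq 0$ with $A^\top y = \sum_{i=1}^n y_i (z_0 - x_i) = 0$ also satisfies $\inner{b}{y} = \sum_{i=1}^n y_i \widetilde{M}(z_0, x_i) \leq 0$. The remaining piece is a normalisation: the relation $\sum_{i=1}^n y_i(z_0 - x_i) = 0$ rewrites as $\bigl(\sum_i y_i\bigr) z_0 = \sum_i y_i x_i$, so when $y = 0$ the conclusion is trivial, and when $s := \sum_i y_i > 0$ the coefficients $\lambda_i := y_i/s$ satisfy $\lambda_i \geq 0$, $\sum_i \lambda_i = 1$, and $z_0 = \sum_i \lambda_i x_i$, with the Farkas inequality scaling to $\sum_i \lambda_i \widetilde{M}(z_0, x_i) \leq 0$. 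Conversely, any convex combination $(\lambda_i)$ with $z_0 = \sum_i \lambda_i x_i$ gives a valid choice $y = \lambda$, producing the exact two-way correspondence that the corollary asserts.

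I do not foresee any significant obstacle: the proof is essentially Farkas' Lemma together with the rescaling $\lambda_i = y_i/s$. A minor subtlety worth flagging is the case $z_0 \notin \co\{x_1, \ldots, x_n\}$: then no admissible $(\lambda_i)$ exists and the stated condition is vacuously true, while Farkas correspondingly grants automatic solvability of the system, consistent with the fact that $\dom(\mathcal{F}^{\mu_p})$ need not lie inside $\co(\dom(\mathcal{F}))$ (already visible for $\mathcal{F} = \{(0,0)\}$, where $\mathcal{F}^{\mu_p}(z_0) = \{z_0^* : \inner{z_0}{z_0^*} \geq 0\}$ is non-empty for every $z_0$).
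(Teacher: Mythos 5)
Your proposal is correct and follows essentially the same route as the paper: both reduce membership in $\dom(\mathcal{F}^{\mu_p})$ to solvability of the finite system $A z_0^* \geq b$ with rows $z_0 - x_i$ and right-hand sides $\widetilde{M}(z_0,x_i)$, apply Farkas' Lemma, and normalise the multipliers $y_i$ to a convex combination $\lambda_i = y_i/\sigma$. Your extra remarks (the Hahn--Banach reduction to finite dimensions and the vacuous case $z_0 \notin \co\{x_1,\ldots,x_n\}$) are sound refinements of details the paper leaves implicit, not a different argument.
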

\begin{proof}
Let $z_0\in X$. The system of inequalities~\eqref{eq:eqred2} can be written in the form $A(z_0)\cdot z_0^*\geq b(z_0)$, where 
\[
A(z_0)=[z_0-x_1,\ldots,z_0-x_n]^{\top},\qquad b(z_0)=(\widetilde{M}(z_0,x_1),\ldots,\widetilde{M}(z_0,x_n)). 
\]
Now let $y=(\lambda_1,\ldots,\lambda_n)$, with $\lambda_i\geq 0$, but not all zero, then
\[
A(z_0)^{\top}y=\left(\sum_{i=1}^n\lambda_i\right) z_0-\sum_{i=1}^n\lambda_ix_i=\sigma\left(z_0-\sum_{i=1}^n\dfrac{\lambda_i}{\sigma}x_i\right)
\]
where $\sigma=\ds\sum_{i=1}^n\lambda_i>0$. We thus have proved that $A(z_0)^{\top}y=0$ if, and only if, $z_0\in \co(\dom(\mathcal{F}))$ and the weighted components of $y$ are the coefficients associated to $z_0$ as a convex combination of $x_i$'s. The corollary follows by using Farkas' Lemma on system~\eqref{eq:eqred2} and observing that 
\[
\inner{b(z_0)}{y}=\dfrac{1}{\sigma}\sum_{i=1}^n\lambda_i\widetilde{M}(z_0,x_i).
\]
\end{proof}
\begin{corollary}
Let $\mathcal{F}$ be defined as in Proposition~\ref{pro:opfinite}. Then 
\[
X\setminus\co(\dom(\mathcal{F}))\subset \dom(\mathcal{F}^{\mu_p}).
\]
\end{corollary}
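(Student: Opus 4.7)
The plan is to derive the corollary as an immediate consequence of Corollary~\ref{cor:mtilde}. Suppose $z_0 \in X \setminus \co(\dom(\mathcal{F}))$. Writing $\dom(\mathcal{F}) = \{x_1, \ldots, x_n\}$, the statement $z_0 \notin \co(\dom(\mathcal{F}))$ means precisely that there do not exist $\lambda_1, \ldots, \lambda_n \geq 0$ with $\sum_{i=1}^n \lambda_i = 1$ and $z_0 = \sum_{i=1}^n \lambda_i x_i$. In other words, the hypothesis in the ``iff'' criterion of Corollary~\ref{cor:mtilde} is quantified over the empty set.

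Consequently the implication ``$\sum_{i=1}^n\lambda_i\widetilde{M}(z_0,x_i) \leq 0$ for all admissible convex coefficients'' is vacuously true, and Corollary~\ref{cor:mtilde} yields $z_0 \in \dom(\mathcal{F}^{\mu_p})$. Since $z_0$ was arbitrary in $X \setminus \co(\dom(\mathcal{F}))$, the inclusion follows.

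If one prefers to avoid citing Corollary~\ref{cor:mtilde} directly, the same conclusion can be reached by invoking the Farkas Lemma on the linear system $A(z_0)\cdot z_0^* \geq b(z_0)$ from the proof of Corollary~\ref{cor:mtilde}: as shown there, any $y \geq 0$ satisfying $A(z_0)^\top y = 0$ corresponds to a convex combination expressing $z_0$ as $\sum_{i=1}^n (\lambda_i/\sigma) x_i$; when $z_0 \notin \co(\dom(\mathcal{F}))$ only $y = 0$ is feasible, the condition $\inner{b(z_0)}{y} \leq 0$ holds trivially, and Farkas guarantees a solution $z_0^* \in \mathcal{F}^{\mu_p}(z_0)$. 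There is no substantive obstacle here; the work was already done in the previous corollary, and the only thing to verify is that ``vacuously true'' is the correct reading of the universally quantified condition, which it is.
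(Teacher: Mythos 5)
Your proof is correct and is exactly the argument the paper intends: the corollary is stated without proof because it follows immediately from Corollary~\ref{cor:mtilde}, whose universally quantified condition is vacuously satisfied when $z_0\notin\co(\dom(\mathcal{F}))$, which is precisely your reading. Your alternative phrasing via Farkas' Lemma (only $y=0$ is feasible, so the system $A(z_0)\cdot z_0^*\geq b(z_0)$ is solvable) is just an unwinding of the same argument and is likewise sound.
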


\section{The algorithm}\label{sec:algorithm}
From now on, $X$ will denote a real Hilbert space and we will identify $X^*$ with $X$. Let $T_0=\{(x_i,x_i^*)\}_{i=1}^n$ be a $p$-cyclically monotone operator. Consider, for $k=1,\ldots,n$, the following sequence of operators:
\begin{align*}
T_k=\big(\{x_k\}\times T_{k-1}^{\mu_p}(x_k)\big)\cup T_{k-1}.
\end{align*}
and $T_{n+1}=T_n^{\mu_p}$. Note that, for $k=1,\ldots,n$,
\[
T_k=\bigcup_{j=1}^k\big(\{x_j\}\times T_{j-1}^{\mu_p}(x_j)\big)\cup \big\{(x_j,x_j^*)\big\}_{j=k+1}^n.
\]
Lemma~\ref{lem:algop} implies that $T_k$, for each $k=1,\ldots,n$, is $p$-cyclically monotone.

From now on, consider $C=\co(\{x_1,\ldots,x_n\})$. Note that $C$ is convex and closed.

\begin{proposition}\label{pro:proptnp1}
The following hold:
\begin{enumerate}
\item $T_1,\ldots,T_n$ are $p$-cyclically monotone.
\item For each $k=1,\ldots,n$, $T_{k-1}^{\mu_p}(x_k)$ is a polyhedron, so it can be written as
\[
 T_{k-1}^{\mu_p}(x_k)=\co(E_k)+N_C(x_k)
\]
for some finite set $E_k$.
\item Let $\mathcal{F}=\ds\bigcup_{k=1}^n \{x_k\}\times E_k$. Then 
\begin{equation}\label{eq:tn}
T_n=\bigcup_{k=1}^n \{x_k\}\times(\co(E_k)+N_C(x_k))=\mathcal{F}_{\co}+N_C.
\end{equation}
\item $\dom(T_{n+1})\subset C$.
\item $T_{n+1}(x_k)=T_n(x_k)$, for all $k=1,\ldots,n$.
\item $T_{n+1}=\ds\mathcal{F}^{\mu_p}\big|_C$
\end{enumerate}
\end{proposition}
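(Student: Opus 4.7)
The plan is to address the six items in the order 1, 5, 2--3, 4, 6, with items 2 and 3 established jointly by induction on $k$. Items 1 and 5 are short sandwich arguments drawn from Section~2; items 2 and 3 reduce the polar of the non-finite operator $T_{k-1}$ to that of a finite skeleton, so that Proposition~\ref{pro:opfinite}, Lemma~\ref{lem:recc} and the Minkowski--Weyl decomposition can do the heavy lifting; item 4 uses strict separation in the Hilbert space; item 6 packages item 3 with Proposition~\ref{pro:copolar} and item 4.

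Item 1 follows by induction on $k$: at stage $k$, the operator $T_{k-1}$ is $p$-cyclically monotone by the inductive hypothesis, and $x_k\in\dom(T_{k-1}^{\mu_p})$ because $(x_k,x_k^*)\in T_0\subset T_{k-1}$ combined with Proposition~\ref{pro:pmono}(5) yields $x_k^*\in T_{k-1}^{\mu_p}(x_k)$; Lemma~\ref{lem:algop} then shows $T_k$ is $p$-cyclically monotone. Item 5 is a sandwich: the inclusion $T_{k-1}\subset T_n$ gives $T_{n+1}(x_k)=T_n^{\mu_p}(x_k)\subset T_{k-1}^{\mu_p}(x_k)=T_n(x_k)$ by Proposition~\ref{pro:pmono}(2), while the $p$-cyclical monotonicity of $T_n$ (now available) supplies the reverse inclusion.

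For item 2 I would argue by induction on $k$. Assume $T_{j-1}^{\mu_p}(x_j)=\co(E_j)+N_C(x_j)$ for all $j<k$ and introduce the auxiliary \emph{finite} operator
\[
\mathcal{F}_{k-1}=\bigcup_{j=1}^{k-1}\bigl(\{x_j\}\times E_j\bigr)\cup\{(x_j,x_j^*)\}_{j=k}^{n},
\]
whose domain is $\{x_1,\ldots,x_n\}$, so that $\co(\dom(\mathcal{F}_{k-1}))=C$. By construction one has the chain $\mathcal{F}_{k-1}\subset T_{k-1}\subset(\mathcal{F}_{k-1})_{\co}+N_C$; chasing polars through Proposition~\ref{pro:pmono}(2) together with Proposition~\ref{pro:copolar} forces $T_{k-1}^{\mu_p}(x_k)=\mathcal{F}_{k-1}^{\mu_p}(x_k)$. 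Proposition~\ref{pro:opfinite} identifies this common value as a polyhedron, Lemma~\ref{lem:recc} computes its recession cone as $N_C(x_k)$, and the Minkowski--Weyl decomposition of a polyhedron produces a finite set $E_k$ such that $T_{k-1}^{\mu_p}(x_k)=\co(E_k)+N_C(x_k)$, closing the induction. Item 3 is then immediate: by construction $T_n=\bigcup_{k=1}^{n}\{x_k\}\times T_{k-1}^{\mu_p}(x_k)$, so substituting item 2 and recognising that $\mathcal{F}=\bigcup_{k}\{x_k\}\times E_k$ delivers $T_n=\mathcal{F}_{\co}+N_C$.

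For item 4, let $x_0\notin C$. Strict separation in the Hilbert space produces $d\neq 0$ with $\inner{x_0}{d}>\max_k\inner{x_k}{d}$; letting $k^*$ be an index at which this maximum is attained, one has $d\in N_C(x_{k^*})$ and $\inner{x_0-x_{k^*}}{d}>0$. If $z_0^*\in T_n^{\mu_p}(x_0)$, Lemma~\ref{lem:algop} makes $T_n\cup\{(x_0,z_0^*)\}$ $p$-cyclically monotone, hence monotone; testing this monotonicity against $(x_{k^*},x_{k^*}^*+td)\in T_n$ (legitimate by item 3, since $N_C(x_{k^*})$ is the recession cone of $T_n(x_{k^*})$) and letting $t\to\infty$ yields a contradiction. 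Item 6 then combines item 3 plus Proposition~\ref{pro:copolar} (which give $T_n^{\mu_p}(x)=\mathcal{F}^{\mu_p}(x)$ for every $x\in C$) with item 4 (which forces $\dom(T_{n+1})\subset C$). The main obstacle in this plan is the identification $T_{k-1}^{\mu_p}(x_k)=\mathcal{F}_{k-1}^{\mu_p}(x_k)$ in item 2: everything downstream hinges on being able to swap the unbounded fibres of $T_{k-1}$ for the finite sets $E_j$ without changing the polar, which is exactly what the sandwich and Proposition~\ref{pro:copolar} are designed to deliver.
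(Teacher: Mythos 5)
Your proof is correct, and for items 1--3, 5 and 6 it follows essentially the paper's own route: item 1 via Lemma~\ref{lem:algop}, item 5 via the sandwich $T_n(x_k)\subset T_{n+1}(x_k)\subset T_{k-1}^{\mu_p}(x_k)=T_k(x_k)=T_n(x_k)$, and items 2--3 by trading $T_{k-1}$ for a finite skeleton and invoking Proposition~\ref{pro:copolar}, Proposition~\ref{pro:opfinite} and Lemma~\ref{lem:recc}; the paper writes $T_{k-1}=F_{\co}+N$ with $N\subset N_C$ exactly where you use the two-sided inclusion $\mathcal{F}_{k-1}\subset T_{k-1}\subset(\mathcal{F}_{k-1})_{\co}+N_C$, but these amount to the same polar computation. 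The one genuine divergence is item 4. The paper argues directly: for $x\in\dom(T_{n+1})$ it tests the monotonicity inequality against $(x_j,x_j^*+tn_j^*)$ with $n_j^*\in N_C(x_j)$ to conclude $x-x_j\in N_C(x_j)^{\circ}=T_C(x_j)$ for every $j$, and then cites Theorem~2.15 of \cite{Bauschke08} for the identity $\bigcap_{j}(x_j+T_C(x_j))=C$. You instead argue by contraposition: strictly separate $x_0\notin C$ from the compact convex set $C$, note that the separating direction $d$ lies in $N_C(x_{k^*})$ at a vertex maximizing $\inner{\cdot}{d}$ and therefore generates an unbounded ray inside $T_n(x_{k^*})$, and let $t\to\infty$ in the monotonicity inequality. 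Both arguments are valid; yours is more self-contained, replacing the external tangent-cone intersection theorem by an explicit separation step, and in fact you do not even need Lemma~\ref{lem:algop} there, since $(x_0,z_0^*)\in T_n^{\mu_p}\subset T_n^{\mu_1}$ already yields the monotonicity inequality against every point of $T_n$.
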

\begin{proof}
\begin{enumerate}
\item It is an immediate consequence of Lemma~\ref{lem:algop}.
\item The set $T_0^{\mu_p}(x_1)$ is a polyhedron since $T_0$ is a finite set and by Proposition~\ref{pro:opfinite}. Let $k\in\{2,\ldots,n\}$ and assume that $T_{j-1}^{\mu_p}(x_j)$ is a polyhedron, for all $j<k$. Let $E_j\subset T_{j-1}^{\mu_p}(x_j)$ be a finite set such that $T_{j-1}^{\mu_p}(x_j)=\co(E_j)+N(x_j)$, where, in view of Lemma~\ref{lem:recc}, 
\[
N(x_j)=N_C(x_j)=0^+[T_{j-1}^{\mu_p}(x_j)],\text{ for }j=1,\ldots,k-1, 
\]
and consider $E_j=\{x_j^*\}$ and $N(x_j)=\{0\}$, for $j=k,\ldots,n$. Thus we can write
\[
T_{k-1}=\bigcup_{j=1}^{n}\{x_j\}\times(\co(E_j)+N(x_j)),
\]
that is $T_{k-1}=F_{\co}+N$, where $F=\ds\bigcup_{j=1}^n\{x_j\}\times E_j$ is a finite set and $N\subset N_C$. By Proposition~\ref{pro:copolar}, 
\[
T_{k-1}^{\mu_p}(x_k)=F^{\mu_p}(x_k),
\]
which is a polyhedron, since $F$ is finite.  
\item It follows from item {\it 2}.
 \item Let $x\in\dom(T_{n+1})$ and let $x^*\in T_{n+1}(x)=T_n^{\mu_p}(x)\subset T_n^{\mu_1}(x)$. Therefore, for every $(y,y^*)$ in $T_n$,
\[
\inner{x-y}{x^*-y^*}\geq 0.
\]
In particular, for any $j=1,\ldots,n$, take $t>0$, $x_j^*\in T_n(x_j)$ and $n_j^*\in 0^+T_n(x_j)=0^+T^{\mu_p}_{j-1}(x_j)=N_C(x_j)$, thus $x_j^*+tn_j^*\in T_n(x_j)$, and
\[
\inner{x-x_j}{x^*-(x_j^*+tn_j^*)}\geq 0\quad\iff\quad\inner{x-x_j}{x^*-x_j^*}\geq t\inner{x-x_j}{n_j^*},
\]
for all $t>0$. This implies that $\inner{x-x_j}{n_j^*}\leq 0$, for all $n_j^*\in N_C(x_j)$, that is $x-x_j\in N_C(x_j)^{\circ}=T_C(x_j)$. Therefore, $x\in x_j+T_C(x_j)$, for all $j=1,\ldots,n$. Using Theorem 2.15 in \cite{Bauschke08}, we conclude that
\[
x\in \bigcap_{j=1}^n(x_j+T_C(x_j))=C.
\]
\item Since $T_n$ is $p$-cyclically monotone and $T_{n+1}=T_n^{\mu_p}$, then $T_n(x_k)\subset T_{n+1}(x_k)$. On the other hand, since $T_{k-1}\subset T_n$,
\[
T_{n+1}(x_k)=T_n^{\mu_p}(x_k)\subset T_{k-1}^{\mu_p}(x_k)=T_k(x_k)=T_n(x_k).
\]
\item This is a consequence of equation~\eqref{eq:tn} and Proposition~\ref{pro:copolar}.
\end{enumerate}

\end{proof}
\begin{remark}
By item~{\it 6} in the previous proposition, given $x\in C$, $T_{n+1}(x)$ is a polyhedron. Moreover, Proposition~\ref{pro:opfinite} states that $T_{n+1}(x)$ can be defined by at most $n$ linear inequalities, when $p\geq 2$, or by at most $\#\mathcal{F}$ inequalities, when $p=1$.
\end{remark}

Let $s=(1,\ldots,1)\in\R^n$ and let $S_n$ be the simplex $S_n=\{\lambda\in\R^n\::\:\lambda\geq 0,\inner{\lambda}{s}=1\}$.
\begin{lemma}\label{lem:open}
 Let $x_1,\ldots,x_n\in X$, $C=\co(\{x_1,\ldots,x_n\})$ and let $F:S_n\to C$ be  defined as $F(\lambda)=\ds\sum_{i=1}^n\lambda_ix_i$.
 Then $F$ is an open map, that is, if $U\subset S_n$ is open in $S_n$, then $F(U)$ is open in $C$. 
\end{lemma}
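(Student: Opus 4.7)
The plan is to reduce to a finite-dimensional setting and then prove openness of $F$ by exhibiting, for each $\hat\lambda\in U$, a relative $C$-neighborhood of $\hat{x}:=F(\hat\lambda)$ contained in $F(U)$. Since $C$ lies in the finite-dimensional affine subspace $L:=\aff(\{x_1,\ldots,x_n\})$, I work inside $L$ throughout. The map $F$ is the restriction to $S_n$ of the linear map $G:\R^n\to X$, $G(\lambda)=\sum_i \lambda_ix_i$; its further restriction to the hyperplane $H=\{\lambda\in\R^n:\inner{\lambda}{s}=1\}$ is an affine surjection $H\to L$, and any affine surjection between finite-dimensional affine spaces is automatically open.

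When $\hat\lambda$ lies in the relative interior of $S_n$ (all coordinates positive), a small $H$-open ball around $\hat\lambda$ is contained in $S_n\cap U$, and its image under $F$ is an $L$-open subset of $C$ containing $\hat{x}$, immediately supplying the desired $C$-neighborhood. The substantive case is when some coordinates of $\hat\lambda$ vanish.

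For that case, the key tool is a polyhedral error bound of Hoffman type: there exists $M>0$, depending only on $\{x_1,\ldots,x_n\}$, such that for every $y\in C$ one can find $\mu\in F^{-1}(y)\cap S_n$ with $\|\mu-\hat\lambda\|\le M\,\|y-\hat{x}\|$. This is exactly Hoffman's classical error bound applied to the linear system $\sum_i \mu_i=1$, $\mu_i\ge 0$, $G(\mu)=y$: the vector $\hat\lambda$ satisfies the first two constraints exactly and fails the third by precisely $\|y-\hat{x}\|$, so Hoffman's inequality yields the estimate. Given this bound, choose $\eps>0$ with $\{\lambda\in S_n:\|\lambda-\hat\lambda\|<\eps\}\subset U$; then for every $y\in C$ with $\|y-\hat{x}\|<\eps/M$ the bound produces $\mu\in F^{-1}(y)\cap S_n$ with $\|\mu-\hat\lambda\|<\eps$, so $\mu\in U$ and $y=F(\mu)\in F(U)$. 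Hence $F(U)$ contains a $C$-neighborhood of $\hat{x}$, completing the proof.

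The main obstacle is the Hoffman-type estimate itself, which is classical but non-elementary. If one prefers a self-contained argument, the same bound can be derived from the local polyhedral structure of $C$ at $\hat{x}$: locally near $\hat{x}$ one has $C=(\hat{x}+T_C(\hat{x}))\cap B$ for a sufficiently small ball $B$, and $T_C(\hat{x})=G(T_{S_n}(\hat\lambda))$; a uniform lifting estimate for the linear surjection $G:T_{S_n}(\hat\lambda)\to T_C(\hat{x})$ between polyhedral cones then follows from compactness of the unit sphere of $T_C(\hat{x})$ together with positive homogeneity, and plays the same role as Hoffman's bound in the final argument.
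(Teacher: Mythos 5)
Your argument is correct, but it takes a genuinely different route from the paper. The paper factors $F$ through the lifted simplex: it sends $\lambda\mapsto\sum_i\lambda_i(x_i,e_i)\in X\times\R^n$, notes that the points $(x_i,e_i)$ are affinely independent so that this map is a homeomorphism of $S_n$ onto $\co\{(x_i,e_i)\}$, and then writes $F=\Pi_1\circ\hat F$ where $\Pi_1$ is the first-coordinate projection restricted to that simplex; openness of $F$ is then inherited from the (classical, but asserted without proof there) fact that a linear map restricted to a convex set is open onto its image. You instead prove a quantitative openness statement directly: for each $y\in C$ the polyhedral system $\inner{\mu}{s}=1$, $\mu\geq 0$, $G(\mu)=y$ has fixed coefficient data, so Hoffman's error bound gives a constant $M$ with $d\big(\hat\lambda,F^{-1}(y)\cap S_n\big)\leq M\|y-F(\hat\lambda)\|$, and this immediately converts an $\eps$-ball in $S_n$ around $\hat\lambda$ into an $(\eps/M)$-ball in $C$ around $F(\hat\lambda)$ inside $F(U)$. (Your separate treatment of the relative-interior case is actually subsumed by the Hoffman argument, and your reduction to $\aff\{x_1,\ldots,x_n\}$ is needed and correctly handled, since $G$ maps into a possibly infinite-dimensional $X$ but its range is finite-dimensional.) What each approach buys: the paper's proof is shorter and purely topological, but hides the real content in the unproved openness of $\Pi_1$ on a convex set; yours is heavier machinery but yields a strictly stronger conclusion, namely openness with a uniform linear modulus (Lipschitz lower semicontinuity of $F^{-1}$), and your sketched self-contained variant via the tangent-cone identity $T_C(\hat x)=G(T_{S_n}(\hat\lambda))$ and a homogeneous lifting bound is a sound way to avoid citing Hoffman outright.
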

\begin{proof}
The points in $X\times \R^n$:
\[
(x_1,e_1),\ldots,(x_n,e_{n})
\]
are affinely independent, where $e_1,\ldots,e_{n}$ are the canonical vectors in $\R^{n}$. Thus, $\hat F:S_n\to \co\{(x_i,e_i)\}$ is a bijective continuous function between compact sets, hence an homeomorphism. The lemma follows by observing that the projection $\Pi_1:\co\{(x_i,e_i)\}\to C$, $\Pi_1(x,y)=x$, is open and that $F=\Pi_1\circ \hat F$. 
\end{proof}
\begin{proposition}
$\dom(T_{n+1})$ is closed.
\end{proposition}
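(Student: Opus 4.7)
The plan is to reduce $\dom(T_{n+1})$ to a subset of $C$ cut out by a continuous inequality on the simplex $S_n$, and then use Lemma~\ref{lem:open} to show that its complement in $C$ is open. First I would combine items~\emph{4} and~\emph{6} of Proposition~\ref{pro:proptnp1} to obtain
\[
\dom(T_{n+1}) = \dom(\mathcal{F}^{\mu_p}) \cap C.
\]
Since $C$ is closed in $X$, it is enough to prove that $\dom(T_{n+1})$ is relatively closed in $C$, equivalently, that $C \setminus \dom(T_{n+1})$ is relatively open in $C$.

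Next, I would apply Corollary~\ref{cor:mtilde}. Let $F:S_n\to C$ be the map $F(\lambda)=\sum_{i=1}^n \lambda_i x_i$ of Lemma~\ref{lem:open}, and define
\[
g:S_n\to \R,\qquad g(\lambda)=\sum_{i=1}^n \lambda_i\,\widetilde{M}\bigl(F(\lambda),x_i\bigr).
\]
By Corollary~\ref{cor:mtilde}, a point $z_0\in C$ fails to belong to $\dom(\mathcal{F}^{\mu_p})$ if and only if there exists $\lambda\in F^{-1}(z_0)$ with $g(\lambda)>0$. Setting $U=\{\lambda\in S_n : g(\lambda)>0\}$, this yields the key set-theoretic identity
\[
C\setminus \dom(T_{n+1}) = F(U).
\]

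The remaining task is to verify that $U$ is open in $S_n$, which reduces to showing that $g$ is continuous. By the definition~\eqref{eq:mtilde}, for each fixed $x_i\in\dom(\mathcal{F})$, the map $z_0\mapsto \widetilde{M}(z_0,x_i)$ is the maximum of the finite family of affine functions $z_0\mapsto \widetilde{N}(x_i,z_p^*)+\inner{z_0-x_i}{z_p^*}$ indexed by $z_p^*\in\ran(\mathcal{F})$, and is therefore continuous in $z_0$. Since $F$ is continuous and $\lambda\mapsto \lambda_i$ is continuous, $g$ is continuous on $S_n$, so $U$ is open.

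Finally, Lemma~\ref{lem:open} asserts that $F$ is an open map, hence $F(U)$ is open in $C$. This makes $\dom(T_{n+1})=C\setminus F(U)$ closed in $C$, and therefore closed in $X$. No real obstacle is expected; the only delicate point is to notice that, although the characterization in Corollary~\ref{cor:mtilde} is quantified over the generally non-unique barycentric coordinates of $z_0$, the openness of $F$ together with the continuity of $\widetilde{M}(\cdot,x_i)$ translates the existential statement directly into the openness of $F(U)$.
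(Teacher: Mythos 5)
Your proof is correct and follows essentially the same route as the paper's: both reduce the problem to the simplex via Corollary~\ref{cor:mtilde}, use the continuity of $\lambda\mapsto\sum_{i}\lambda_i\widetilde{M}(F(\lambda),x_i)$, and invoke the openness of $F$ from Lemma~\ref{lem:open}. The only differences are cosmetic: you package the argument as the global identity $C\setminus\dom(T_{n+1})=F(U)$ instead of the paper's pointwise neighborhood argument, and you explicitly justify (via the finite max of affine functions in~\eqref{eq:mtilde}) the continuity that the paper merely asserts.
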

\begin{proof}
 Let $z_0\in C$, $z_0\notin\dom(T_{n+1})$, and let $\mathcal{F}$ be as in Proposition~\ref{pro:proptnp1}, item 3. Using Corollary~\ref{cor:mtilde} applied to $\mathcal{F}$, there exists $\lambda_0\in S_n$ such that $z_0\in F(\lambda_0)$ and
\[
\widetilde{\mathcal{M}}(\lambda_0)=\sum_{i=1}^n\lambda_i\widetilde{M}(F(\lambda_0),x_i)>0.
\]
Since $\widetilde{\mathcal{M}}$ is continuous, there exists $\delta>0$ such that, for all $\lambda\in B(\lambda_0,\delta)\cap S_n$, $\widetilde{\mathcal{M}}(\lambda)>0$. On the other hand, by Lemma~\ref{lem:open}, $F$ is an open map, so there exists $\eps>0$ such that $B(z_0,\eps)\cap C\subset F(B(\lambda_0,\delta)\cap S_n)$. This implies that, for all $z\in B(z_0,\eps)\cap C$, there exists $\lambda\in B(\lambda_0,\delta)\cap S_n$ such that $z=F(\lambda)$ and, thus, $\widetilde{\mathcal{M}}(\lambda)>0$, that is $z\notin\dom(T_{n+1})$. The proposition then follows.
\end{proof}

%
%
%
%
%
%
%



\section{Computational Tools}\label{sec:tools}
In this section we present some of the code we used to obtain the explicit formulas of operators $T_n$ and $T_{n+1}$. We used \emph{Maxima}, version 5.41.0, which is an open source symbolic algebra program.

First, we need some preliminary subroutines.
\begin{lstlisting}
/* xc : extracts the "x" component of pts=[x,xs] */
xc(pts) := part(pts,makelist(i, i, 1, length(pts)/2))$
/* xsc : extracts the "xs" component of pts=[x,xs] */
xsc(pts) :=
  part(pts,makelist(i, i, length(pts)/2 + 1 , length(pts)))$

/* xl : extracts the "x" component of a list 
        ptsl=[[x1,xs1],...,[xn,xsn]] */
xl(ptsl) := makelist(xc(ptsl[i]),i,1,length(ptsl))$
/* xsl : extracts the "xs" component of a list
         ptsl=[[x1,xs1],...,[xn,xsn]] */
xsl(ptsl) := makelist(xsc(ptsl[i]),i,1,length(ptsl))$
\end{lstlisting}
The procedure \texttt{tuples} creates a list of all $N$-tuples of elements in the list $S$. This procedure was extracted from~\cite{cite:tuples}.
\begin{lstlisting}
tuples(S, N):=block([K,counter,i,res,j],
   K:length(S),
   counter:create_list(1, i, 1, N),
   res:[create_list(S[counter[i]],i,1,N)],
   for i:1 thru K^N-1 do (
      counter[1]:counter[1]+1,
      for j:1 thru N-1 do (
         if counter[j]>K then (
            counter[j]:1,
            counter[j+1]:counter[j+1]+1
         )
         else(
            return
         )
      ),
      res:append(res, [create_list(S[counter[i]],i,1,N)])
   ),
   res
)$
\end{lstlisting}

\subsection{Testing for $p$-cyclical monotonicity}\label{sec:testpmono}
The procedure \texttt{sump} computes the cyclic sum 
\[
\sum_{k=1}^{r-1}\inner{x_{k+1}-x_{k}}{x^*_{k}} +\inner{x_{1}-x_{r}}{x^*_{r}}
\]
of a list of points $[[x_{1},x^*_{1}],\ldots,[x_{r},x^*_{r}]]$.
\begin{lstlisting}
sump(cyc):=block([x,xs,r],
/*
  cyc : list of points (x,x^*) to be sumed
     must be in the form [[x[1],xs[1]],...,[x[r],xs[r]]]
*/
  x:xl(cyc), /*  x = [x[1],x[2],...,x[r]] */
  xs:xsl(cyc), /* xs = [xs[1],xs[2],...,xs[r]] */
  r:length(cyc), /* length of the cycle */
  /* output */
  sum( (x[k+1]-x[k]).xs[k] ,k,1,r-1)+(x[1]-x[r]).xs[r]
)$
\end{lstlisting}

The procedure \texttt{ispmono} computes the maximum of the sums of all the $p$-cycles of a finite operator $\mathcal{F}$. In this way, it determines if $\mathcal{F}$ is a $p$-cyclically monotone operator.
\begin{lstlisting}
ispmono(opF,p):=block([cyclist,cyc,maxsum,out],
/*
  opF : list of points of operator \mathcal{F}
     must be in the form [[x1,xs1],...,[xn,xsn]]
*/
  cyclist:tuples(opF,p+1), /* list of all p-cycles in F */
  maxsum:-inf,
  for cyc in cyclist do(
     maxsum:max(maxsum,sump(cyc))
  ),
if maxsum<=0 then out:true else out:false,
[out,maxsum] /* output */
)$
\end{lstlisting}
\begin{remark}
Since \emph{Maxima} supports symbolic arithmetic, the previous procedure not only provides an exact output when the input operator $\mathcal{F}$ has integer components, but also when $\mathcal{F}$ contains fractions, integer roots and certain constants (e.g., $e$ and $\pi$).
\end{remark}

\subsection{$\widetilde{N}$ and $\widetilde{M}$ implementations}
To implement $\widetilde{N}(z_1,z_p^*)$ we first need two additional subroutines: given a finite operator $\mathcal{F}$, the procedures \texttt{image} and \texttt{imageinv} compute $\mathcal{F}(z_1)$ and $\mathcal{F}^{-1}(z_p^*)$, respectively, for given $z_1\in\dom(\mathcal{F})$ and $z_p^*\in \ran(\mathcal{F})$. 
\begin{lstlisting}
image(opF,z1):=block([z1slist,i],
/*
opF : list of points of operator \mathcal{F}
    must be in the form [[x1,xs1],...,[xn,xsn]]
z1 : point in dom(F)
*/
   z1slist:[],
   for i:1 thru length(opF) do (
      if z1=xc(opF[i]) then
         z1slist:append(z1slist,[xsc(opF[i])])
   ),
   z1slist /* output F(z1) as a list*/
)$

imageinv(opF,zps):=block([z1slist,i],
/*
opF : list of points of operator \mathcal{F}
    must be in the form [[x1,xs1],...,[xn,xsn]]
zps : point in ran(F)
*/
   zplist:[],
   for i:1 thru length(opF) do (
      if zps=xsc(opF[i]) then
         zplist:append(zplist,[xc(opF[i])])
   ),
   zplist /* output F^{-1}(zps) as a list */
)$ 
\end{lstlisting}

We now present implementations of $\widetilde{N}(z_1,z_p^*)$ and $\widetilde{M}(z_0,z_1)$, as they appear in equations~\eqref{eq:ntilde} and \eqref{eq:mtilde}, respectively.
\begin{lstlisting}
Ntilde(opF,z1,zps,p):=block(
[z1slist,zplist,z1s,zp,pcyc,partpcyc],
   z1slist:image(opF,z1),
   zplist:imageinv(opF,zps),
   Zp:tuples(opF,p-2),
   maxsum:-inf,
   for z1s in z1slist do(
      for zp in zplist do(
         for partpcyc in Zp do(
            pcyc:append( [flatten([z1,z1s])],
                         partpcyc,
                         [flatten([zp,zps])] ),    
            maxsum:max(maxsum,sump(pcyc))
         )      
      )
   ),
   factor(ratsimp(maxsum)) /* output (simplified) */
)$
\end{lstlisting}

\begin{lstlisting}
Mtilde(opF,z0,z1,p):=block([ranF,maxsum,zps],
   ranF:unique(xsl(opF)),
   maxsum:-inf,
   for zps in ranF do(
      maxsum:max(maxsum,Ntilde(opF,z1,zps,p)+(z0-z1).zps)
   ),
   factor(ratsimp(maxsum)) /* output (simplified)*/
)$
\end{lstlisting}

\subsection{Using \emph{Maxima} symbolic algebra features}\label{sec:maxima}
The subroutines implemented in this section make use of \emph{Maxima}'s symbolic features. 

The procedure \texttt{polareqs} computes the system of inequalities in~\eqref{eq:eqred2}. This procedure accepts a finite operator $\mathcal{F}$ and a point $z_0\in\R^d$ as inputs, and returns the list of inequalities that define $\mathcal{F}^{\mu_p}(z_0)$, when $p\geq 2$. The inequalities appear in terms of undetermined variables of the form $Y[1],\ldots,Y[d]$. Note that the input point $z_0$ can also be undetermined, when this is the case, \texttt{polareqs} returns the full system of inequalities that satisfy any $(z_0,z_0^*)\in \mathcal{F}^{\mu_p}$.

\begin{lstlisting}
polareqs(opF,z0,p):=block([eqs,domF,z0s,z1],
/*
opF : list of points of operator \mathcal{F}
    must be in the form [[x1,xs1],...,[xn,xsn]]
 z0 : input point
  p : order of cyclicity, must be at least two
z0s : generic point in F(z0)
*/
   z0s:makelist(Y[i],i,length(z0)), 
   domF:unique(xl(opF)),
   eqs:[],
   for z1 in delete(z0,domF) do(
      eqs:append(eqs,[Mtilde(opF,z0,z1,p)<=(z0-z1).z0s])
   ),
   eqs /* output */
)$
\end{lstlisting}

The following procedure was extracted from~\cite{cite:ext}.
It computes the extreme points of a polyhedron defined by a list of inequalities.
\begin{lstlisting}
ext(apr):=block([var,fs,cs,ap,s,S,m],
   load(simplex),
   var:sort(listofvars(apr)),
   s:apply("+",var),
   fs:append([1,s,-s],var,-var),
   ap(k):=subst(apr[k]=(lhs(apr[k])=rhs(apr[k])),apr),
   cs:makelist(ap(k),k,1,length(apr)),
   S:[],
   for f in fs do
      for c in cs do (
         m:minimize_lp(f,c),
         if listp(m) then
            S:cons(subst(m[2],var),S)
      ),
   listify(setify(S))
)$
\end{lstlisting}

The procedure \texttt{operatorTn} computes the finite operator $\mathcal{F}$ as in Proposition~\ref{pro:proptnp1}, item {\it 3}. The procedure takes as input the operator $T_0=\{(x_i,x_i^*)\}_{i=1}^n$ and proceeds in the following way: $\mathcal{F}$ starts as $T_0$ and, for each $x_k$ in $\{x_1,\ldots,x_n\}$,
\begin{itemize}
 \item using \texttt{polareqs}, it computes the equations that define $T_k(x_k)$;
 \item using \texttt{ext}, it computes $E_k$, the extreme points of $T_k(x_k)$;
 \item then it deletes $(x_k,x_k^*)$ from $\mathcal{F}$;
 \item and finally, it adds $\{x_k\}\times E_k$ to $\mathcal{F}$.
\end{itemize}

\begin{lstlisting}
operatorTn(opT0,p):=block([opF,domT0,xk,Tkxkeqs,Ek,ptk,zv],
/*
opT0 : list of points of initial points {(x_i,x_i^*)},
        must be in the form [[x1,xs1],...,[xn,xsn]]
*/
   opF:opT0,
   domT0:xl(opT0),
   for xk in domT0 do(
      Tkxkeqs:polareqs(opF,xk,p), /* equations of T_k(xk) */
      Ek:unique(ratsimp(ext(Tkxkeqs))), /* vertices E_k */
      for ptk in opF do(
         if xk=xc(ptk) then(
            opF:delete(ptk,opF)
        )
      ), /* deletion of (xk,xks) from F*/
      for zv in Ek do(
         opF:append(opF,[flatten([xk,zv])])  
      ) /* addition of xk \times Ek to F*/
   ),	
   opF /* output */
)$ 
\end{lstlisting}

\section{Examples}
In this section we present many different explicit examples of maximal $p$-cyclically monotone operators. First, a little outline of our procedure is given. For each example we present the \emph{starting points} $(x_1,x_1^*),\ldots,(x_n,x_n^*)$ and, using the algorithm given in the Section~\ref{sec:algorithm} and the implementation of $\widetilde{M}(z_0,z_1)$ given in Section~\ref{sec:tools}, we present the \emph{second to last operator} $T_n$, the operator $\mathcal{F}$ and the equations that any $(z_0,z_0^*)\in T_{n+1}$, $T_{n+1}$ being the \emph{last step operator}, must satisfy. 
Following that we try to obtain the domain and correspondence rule of $T_{n+1}$. In most cases, we make use of Corollary~\ref{cor:mtilde} to obtain a candidate for $\dom(T_{n+1})$. Finally, we prove that $T_{n+1}$ is $p$-cyclically monotone, which implies that it is maximal $p$-cyclically monotone.

\subsection{Bauschke and Wang original example}\label{sec:BW}
Consider the starting points:
\begin{align*}
x_1&=(1, 0),&x_1^*&=(0,1), \\
x_2&=(0, 1),&x_2^*&=(-1, 0),\\
x_3&=(-1,0),&x_3^*&=(-1,-2),\\
x_4&=(0,-1),&x_4^*&=(0,-1).
\end{align*}
After the first four steps of our algorithm, we obtain the following images of $x_1,\ldots,x_4$:
\begin{align*}
T_4(x_1)&=\{(u,v)\::\: u+1\geq |v|,\,u\geq 0\}\\
        &=\co\{(0,-1),(0,1)\}+\cone\co\{(1,-1),(1,1)\},\\
T_4(x_2)&=\{(u,v)\::\: v\geq |u+1|\}\\
        &=(-1,0)+\cone\co\{(1,1),(-1,1)\},\\
T_4(x_3)&=\{(u,v)\::\:u\leq -|v+1|,\,u\leq -1\}\\
        &=\co\{(-1,0),(-1,-2)\}+\cone\co\{(-1,1),(-1,-1)\},\\
T_4(x_4)&=\{(u,v)\::\: v\leq -|u|-1\}\\
        &=(0,-1)+\cone\co\{(-1,-1),(1,-1)\}.
\end{align*}
Therefore $\mathcal{F}=\{(y_i,y_i^*)\}_{i=1}^6$ is given by
\begin{align*}
y_1&=(1, 0)&y_1^*&=(0,-1),&y_2&=(1, 0)&y_2^*&=(0,1), \\
y_3&=(0, 1)&y_3^*&=(-1, 0),&y_4&=(-1,0)&y_4^*&=(-1,0),\\
y_5&=(-1,0)&y_5^*&=(-1,-2),&y_6&=(0,-1)&y_6^*&=(0,-1),
\end{align*}
that is 
\begin{align*}
\dom(\mathcal{F})&=\{(1,0),(0,1),(-1,0),(0,-1)\},\\\ran(\mathcal{F})&=\{(0,1),(0,-1),(-1,0),(-1,-2)\}. 
\end{align*}
We now consider $z_0=(x,y)$ and $z_0^*=(u,v)$, and follow the steps of the proof of Proposition~\ref{pro:opfinite}, so we obtain the set of inequalities:
\begin{align*}
 \max\{1-x,-x-2y-1,-y,y\}&\leq (x-1)u+yv,\\
 \max\{-x,-x-2y-2,-y-1,y-1\}&\leq xu+(y-1)v,\\
 \max\{-x-2,-x-2y,-y-1,y-1\}&\leq (x+1)u+yv,\\
 \max\{-x-1,-x-2y-1,-y,y-2\}&\leq xu+(y+1)v.
\end{align*}
These inequalities were obtained and handled by Bauschke and Wang in~\cite{Bauschke08}. They were able to prove that the last step operator $T_{5}$ is defined on $\{(x,y)\::\:|x|+|y|=1\}$ and
\begin{align*}
T_5(x_i)&=T_4(x_i),\quad\forall i=1,2,3,4,\\
T_5(1-t,t)&=(-t,1-t)+\cone\{(1,1)\},\\
T_5(-t,1-t)&=(-1,0)+\cone\{(-1,1)\},\\
T_5(-1+t,-t)&=(t-1,t-2)+\cone\{(-1,-1)\},\\
T_5(t,t-1)&=(0,-1)+\cone\{(1,-1)\}.
\end{align*}
See Figure~\ref{fig:BW} for a partial graphical representation of the domain and range of $T_5$.

\begin{figure}
\centering
\begin{tikzpicture}[scale=1.5]
\node at (0,2.5) {$\dom(T_5)$};
\draw[very thick] (1,0) -- (0,1) -- (-1,0) -- (0,-1) -- cycle;

\draw[semitransparent,->](-1.5,0)--(2,0)node[below right]{$x$};
\draw[semitransparent,->](0,-1.5)--(0,2)node[left]{$y$};

\fill
(1,0)circle(1.5pt)node[below right]{\small $x_1$};
\fill
(-1,0)circle(1.5pt)node[above left]{\small $x_3$};
\fill
(0,1)circle(1.5pt)node[right]{\small $x_2$};
\fill
(0,-1)circle(1.5pt)node[left]{\small $x_4$};

\end{tikzpicture}
\qquad%
\begin{tikzpicture}
\node at (0,2.6) {$\ran(T_5)$}; 
\fill[gray!30] 
(1,2) --  (0,1) -- (0,-1) -- (2,-3) -- (2,2) -- cycle; 
\fill[gray!30] 
(1,2) --  (-1,0) --  (-3,2) -- cycle;
\fill[gray!30] 
(-3,2) -- (-1,0) -- (-1,-2) -- (-2,-3) -- (-3,-3) -- cycle;
\fill[gray!30] 
(-2,-3) -- (0,-1) -- (2,-3) -- cycle;

\draw[very thick] (-3,2) -- (-1,0) -- (1,2);
\draw[very thick] (-2,-3) -- (0,-1) -- (2,-3);
\draw[very thick] (-1,0) -- (-1,-2);
\draw[very thick] (0,1) -- (0,-1);

\draw[dashed,semitransparent,->](-3,0)--(2.1,0)node[below right]{$u$};
\draw[dashed,semitransparent,->](0,-3)--(0,2.1)node[left]{$v$};

\node[above] 
at (1,0) {\scriptsize $T_5(x_1)$};
\node 
at (-1,1.25) {\scriptsize $T_5(x_2)$};
\node 
at (-2,-1) {\scriptsize $T_5(x_3)$};
\node 
at (0,-2) {\scriptsize $T_5(x_4)$};

\fill 
(0,1) circle(2pt) node[right]
{\footnotesize $x_1^*$};
\fill 
(-1,-2) circle(2pt) node[below]
{\footnotesize $x_3^*$};
\fill 
(-1,0) circle(2pt) node[above]
{\footnotesize $x_2^*$};
\fill 
(0,-1) circle(2pt) node[right] 
{\footnotesize $x_4^*$};

\end{tikzpicture}
\caption{Example from Bauschke and Wang~\cite{Bauschke08}}\label{fig:BW}
\end{figure}
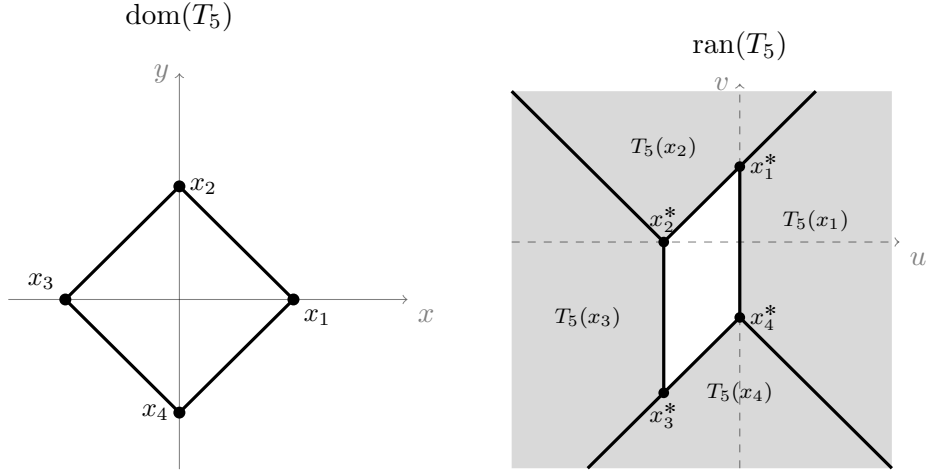

\subsection{A maximal 2-cyclically monotone operator in $\R^3$}
Consider the starting points:
\begin{align*}
x_1&=(-1, -1, -1),&x_1^*&=(-8,-8,16),\\
x_2&=(1, 0, 0),&x_2^*&=(8,12,0), \\
x_3&=(0, 1, 0),&x_3^*&=(-12, 8, 0),\\
x_4&=(0, 0, 1),&x_4^*&=(0, 0, 16).
\end{align*}
Note that $C=\co\{x_1,x_2,x_3,x_4\}$ forms a tetrahedron in $\R^3$. After the first four steps, we obtain the following images of $x_1,\ldots,x_4$, 
\begin{align*}
T_4(x_1)&=\{(u,v,w)\::\:2u+v+w\leq 0,\,u+2v+w\leq 4,\,u+v+2w\leq 20\}\\
        &=(-6,-2,14)+\cone\co\{(1,1,-3),(1,-3,1),(-3,1,1)\},\\
T_4(x_2)&=\{(u,v,w)\::\:2u+v+w\geq 4,\,v-u\leq 4,\,w-u\leq 16\}\\
        &=(-4,0,12)+\cone\co\{(1,1,1),(1,-3,1),(1,1,-3)\},\\
T_4(x_3)&=\{(u,v,w)\::\:u+2v+w\geq 4,\,v-u\geq 4,\,w-v\leq 12\}\\
        &=(-5,-1,11)+\cone\co\{(1,1,1),(-3,1,1),(1,1,-3)\},\\
T_4(x_4)&=\{(u,v,w)\::\:u+v+2w\geq 20,\,w-u\geq 16,\,w-v\geq 16\}\\
        &=(-3,-3,13)+\cone\co\{(1,1,1),(1,-3,1),(-3,1,1)\}.
\end{align*}
Therefore $\mathcal{F}=\{(y_i,y_i^*)\}_{i=1}^4$ is given by
\begin{align*}
y_1&=(1,0,0),&y_1^*&=(-4,0,12),&y_2&=(0, 1,0),&y_2^*&=(-5,-1,11),\\
y_3&=(-1,-1,-1),&y_3^*&=(-6,-2,14),&y_4&=(0,0,1),&y_4^*&=(-3,-3,13),
\end{align*}
that is 
\begin{align*}
\dom(\mathcal{F})&=\{(1,0,0),(0,1,0),(0,0,1),(-1,-1,-1)\},\\\ran(\mathcal{F})&=\{(-6,-2,14),(-4,0,12),(-5,-1,11),(-3,-3,13)\}. 
\end{align*}
We now consider $z_0=(x,y,z)$ and $z_0^*=(u,v,w)$, and follow the steps of the proof of Proposition~\ref{pro:opfinite}, so we obtain the set of inequalities:
\begin{equation}\label{eq:3d1234}
 \begin{aligned}
 \max\left\{ \begin{array}{c}13z-3y-3x+7\\14z-2y-6x+6\\11z-y-5x+5\\12 z-4x+4\end{array}\right\}&\leq (x+1)u+(y+1)v+(z+1)w,\\
 \max\left\{ \begin{array}{c}11z-y-5x+5\\12 z-4x+4\\13z-3y-3x+3\\14z-2y-6x+2\end{array} \right\}&\leq (x-1)u+yv+zw,\\
 \max\left\{ \begin{array}{c}14z-2y-6x+2\\11z-y-5x+1\\12 z-4x\\13z-3y-3x-1\end{array} \right\}&\leq xu+(y-1)v+zw,\\
 \max\left\{ \begin{array}{c}12 z-4x-12\\13z-3y-3x-13\\14z-2y-6x-14\\11z-y-5x-15\end{array} \right\}&\leq xu+yv+(z-1)w,
\end{aligned}
\end{equation}
together with the domain conditions
\begin{equation}\label{eq:3ddom}
x+y+z\leq 1,\quad x+y-3z\leq 1,\quad x-3y+z\leq 1,\quad -3x+y+z\leq 1.  
\end{equation}
Using~\eqref{eq:3ddom}, it is straightforward to verify that the terms inside the maximums taken on the left hand side of inequalities~\eqref{eq:3d1234} are in decreasing order, from top to bottom. Therefore
\begin{align}
\widetilde{M}(z_0,x_1):=13z-3y-3x+7&\leq (x+1)u+(y+1)v+(z+1)w,\label{eq:3dran1}\\
\widetilde{M}(z_0,x_2):=11z-y-5x+5&\leq (x-1)u+yv+zw,\label{eq:3dran2}\\
\widetilde{M}(z_0,x_3):=14z-2y-6x+2&\leq xu+(y-1)v+zw,\label{eq:3dran3}\\
\widetilde{M}(z_0,x_4):=12 z-4x-12&\leq xu+yv+(z-1)w.\label{eq:3dran4}
\end{align}

To determine the domain of $T_5$, recall that any $z_0\in C$ can be uniquely written as
\[
z_0=\sum_{i=1}^4\alpha_i x_i= (\alpha_2-\alpha_1,\alpha_3-\alpha_1,\alpha_4-\alpha_1),
\]
with $\alpha_i\geq 0$ and $\ds\sum_{i=1}^4\alpha_i=1$. By Corollary~\ref{cor:mtilde}, $z_0\in\dom(T_5)$ if, and only if, 
\[                                                                                                                                                                     
\sum_{i=1}^4\alpha_i \widetilde{M}(z_0,x_i)\leq 0.                                                                                                                                                                    \]
This inequality, after replacing in the formulas for $\widetilde{M}$ and considering $\alpha_1=1-\alpha_2-\alpha_3-\alpha_4$, takes the form
\[
\alpha_2(1-\alpha_2-\alpha_3-\alpha_4)+\alpha_3\alpha_4\leq 0,
\]
that is,
\[
\alpha_1\alpha_2+\alpha_3\alpha_4\leq 0.
\]
Since $\alpha_i\geq 0$, for all $i$, the last inequality holds if. and only if, $\alpha_1\alpha_2=0$ and $\alpha_3\alpha_4=0$. Therefore $z_0\in \dom(T_5)$ if, and only if,
\begin{align*}
\alpha_1=0\wedge \alpha_4=0 &\quad\iff\quad z_0\in  [x_2,x_3]=[y_1,y_2],\\
\alpha_2=0\wedge \alpha_4=0 &\quad\iff\quad z_0\in  [x_3,x_1]=[y_2,y_3],\\
\alpha_2=0\wedge \alpha_3=0 &\quad\iff\quad z_0\in  [x_1,x_4]=[y_3,y_4],\\
\alpha_1=0\wedge \alpha_3=0 &\quad\iff\quad z_0\in  [x_4,x_2]=[y_4,y_1].
\end{align*}
Note that, although $C=\co\{x_i\}$ is a tetrahedron, the domain of $T_5$ are the segments
\[
\dom(T_5)=[y_1,y_2]\cup[y_2,y_3]\cup[y_3,y_4]\cup [y_4,x_1].
\]
See Figure~\ref{fig:3dmax}.
\tdplotsetmaincoords{55}{100}
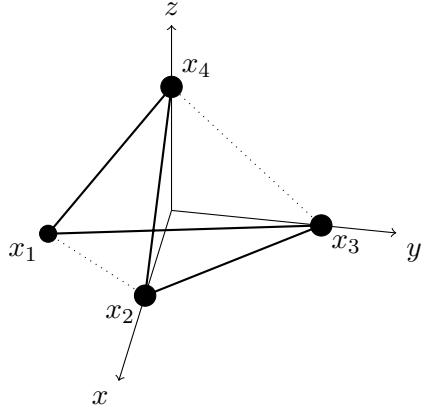
\begin{figure}
\centering
\begin{tikzpicture}[scale=2,tdplot_main_coords]
\coordinate (O) at (0,0,0);
\draw[->] (0,0,0) -- (2,0,0) node[anchor=north east]{$x$};
\draw[->] (0,0,0) -- (0,1.5,0) node[anchor=north west]{$y$};
\draw[->] (0,0,0) -- (0,0,1.5) node[anchor=south]{$z$};

\draw[fill] (0,0,1) circle (2pt) node[anchor=south west]{$x_4$};
\draw[fill] (0,1,0) circle (2pt) node[anchor=north west]{$x_3$} ;
\draw[fill] (1,0,0) circle (2pt) node[anchor=north east]{$x_2$};
\draw[fill] (-1,-1,-1) circle (1.6pt) node[anchor=north east]{$x_1$};
\draw[thick] (1,0,0) -- (0,1,0) -- (-1,-1,-1) -- (0,0,1) -- cycle;
\draw[dotted] (1,0,0) -- (-1,-1,-1);
\draw[dotted] (0,1,0) -- (0,0,1);
\end{tikzpicture}
\caption{Domain of a maximal 2-cyclically monotone operator in $\R^3$}\label{fig:3dmax}
\end{figure}

The operator $T_5$ coincides with $T_4$ at $y_1,\ldots,y_4$. We now deduce the formula for $T_5$ at the relative interior of each of the segments of the domain.
\begin{itemize}
\item $z_0\in ]y_1,y_2[\iff z_0=(t,1-t,0),\,t\in]0,1[$. Replacing this in equations~\eqref{eq:3dran1} through~\eqref{eq:3dran4}, we obtain:
 \begin{align*}
 \eqref{eq:3dran1}&\qquad\text{implies}\qquad u+2v+w+t(u-v)\geq 4,\\
 \eqref{eq:3dran2}&\qquad\text{implies}\qquad (t-1)(4+u-v)\geq 0,\\
 \eqref{eq:3dran3}&\qquad\text{implies}\qquad t(4+u-v)\geq 0,\\
 \eqref{eq:3dran4}&\qquad\text{implies}\qquad 12+v+t(4+u-v)\geq w.
 \end{align*}
 Therefore $v-u=4$ so $12+v\geq w$, $u+2v+w\geq 4+4t$ and
 \begin{align*}
 T_5(t,1-t,0)&=\{(u,v,w)\::\:v-u=4,\,v-w\geq -12,\,u+2v+w\geq 4+4t\}\\
 &=(t-5,t-1,t+11)+\cone\co\{(1,1,-3),(1,1,1)\}.
  \end{align*}
\item $z_0\in ]y_2,y_3[\iff z_0=(-t,1-2t,-t),\,t\in]0,1[$. Replacing this in equations~\eqref{eq:3dran1} through~\eqref{eq:3dran4}, we obtain:
 \begin{align*}
 \eqref{eq:3dran1}&\qquad\text{implies}\qquad (t-1)(u+2v+w-4)\leq 0,\\
 \eqref{eq:3dran2}&\qquad\text{implies}\qquad 4+u+t(u+2v+w-4)\leq v,\\
 \eqref{eq:3dran3}&\qquad\text{implies}\qquad t(u+2v+w-4)\leq 0,\\
 \eqref{eq:3dran4}&\qquad\text{implies}\qquad w+t(u+2v+w-8)\leq 12+v.
 \end{align*}
 Therefore $u+2v+w=4$ so $4+u\leq v$, $w-v\leq 12+4t$ and
 \begin{align*}
 T_5(-t,1-2t,-t)&=\{(u,v,w)\::\:u+2v+w=4,\,v-u\geq 4,\,v-w\geq -12-4t\}\\
                &=(-5-t,-1-t,11+3t)+\cone\co\{(-3,1,1),(1,1,-3)\}.
  \end{align*}

 \item $z_0\in ]y_3,y_4[\iff z_0=(-t,-t,1-2t),\,t\in]0,1[$. Replacing this in equations~\eqref{eq:3dran1} through~\eqref{eq:3dran4}, we obtain:
 \begin{align*}
 \eqref{eq:3dran1}&\qquad\text{implies}\qquad (t-1)(u+v+2w-20)\leq 0,\\
 \eqref{eq:3dran2}&\qquad\text{implies}\qquad 16+u+t(u+v+2w-16)\leq w,\\
 \eqref{eq:3dran3}&\qquad\text{implies}\qquad 16+v+t(u+v+2w-20)\leq w,\\
 \eqref{eq:3dran4}&\qquad\text{implies}\qquad t(u+v+2w-20)\leq 0.
 \end{align*}
 Therefore $u+v+2w=20$ so $16+u+4t\leq w$, $16+v\leq w$ and
 \begin{align*}
 T_5(-t,-t,1-2t)&=\{(u,v,w)\::\:u+v+2w=20,\,w-u\geq 16+4t,\,w-v\geq 16\}\\
                &=(-3-3t,t-3,13+t)+\cone\co\{(-3,1,1),(1,-3,1)\}.
  \end{align*}
  \item $z_0\in ]y_4,y_1[\iff z_0=(t,0,1-t),\,t\in]0,1[$. Replacing this in equations~\eqref{eq:3dran1} through~\eqref{eq:3dran4}, we obtain:
 \begin{align*}
 \eqref{eq:3dran1}&\qquad\text{implies}\qquad 20+t(w-u-16)\leq u+v+2w,\\
 \eqref{eq:3dran2}&\qquad\text{implies}\qquad (t-1)(16+u-w)\geq 0,\\
 \eqref{eq:3dran3}&\qquad\text{implies}\qquad 16+v+t(w-u-20)\leq w,\\
 \eqref{eq:3dran4}&\qquad\text{implies}\qquad t(16+u-w)\geq 0.
 \end{align*}
 Therefore $w-u=16$ so $16+v-4t\leq w$, $u+v+2w\geq 20$ and
 \begin{align*}
 T_5(t,0,1-t)&=\{(u,v,w)\::\:w-u=16,\,w-v\geq 16-4t,\,u+v+2w\geq 20\}\\
                &=(-3-t,3t-3,13-t)+\cone\co\{(1,-3,1),(1,1,1)\}.
  \end{align*}
\end{itemize}
Therefore, the full correspondence rule of $T_5$ is
\begin{align*}
T_5(-1,-1,-1)=(-6,-2,14)+\cone\co\{(1,1,-3),(1,-3,1),(-3,1,1)\},\\
T_5(1,0,0)=(-4,0,12)+\cone\co\{(1,1,1),(1,-3,1),(1,1,-3)\},\\
T_5(0,1,0)=(-5,-1,11)+\cone\co\{(1,1,1),(-3,1,1),(1,1,-3)\},\\
T_5(0,0,1)=(-3,-3,13)+\cone\co\{(1,1,1),(1,-3,1),(-3,1,1)\},
\end{align*}
and, for all $t\in]0,1[$,
\begin{align*}
T_5(t,1-t,0)=(t-5,t-1,t+11)+\cone\co\{(1,1,-3),(1,1,1)\},\\
T_5(-t,1-2t,-t)=(-5-t,-1-t,11+3t)+\cone\co\{(-3,1,1),(1,1,-3)\},\\
T_5(-t,-t,1-2t)=(-3-3t,t-3,13+t)+\cone\co\{(-3,1,1),(1,-3,1)\},\\
T_5(t,0,1-t)=(-3-t,3t-3,13-t)+\cone\co\{(1,-3,1),(1,1,1)\}.
\end{align*}

From the calculations above, we conclude that $T_5$ has the form
\[
T_5=\bigcup_{i=1}^4\co\{(y_i,y_i^*),(y_{i+1},y_{i+1}^*)\}+N_C,
\]
considering $(y_5,y_5^*)=(y_1,y_1^*)$. From Lemma~\ref{lem:orto}, $T_5$ is $2$-cyclically monotone and, since $T^5=T_4^{\mu_2}$, $T^5$ is maximal 2-cyclically monotone.

\subsection{A maximal $3$-cyclically monotone operator}
Consider the starting points:
\begin{align*}
x_1&=(1, 0),&x_1^*&=(1,1), \\
x_2&=(1, 1),&x_2^*&=(0,2), \\
x_3&=(0, 1),&x_3^*&=(-1, 1),\\
x_4&=(-1,0),&x_4^*&=(-1,-1),\\
x_5&=(0,-1),&x_5^*&=(1,-1).
\end{align*}

After the first five steps of our algorithm, we obtain the following images of $x_1,\ldots,x_4$,
\begin{align*}
T_5(x_1)&=\{(u,v)\::\: u \geq  v,\, u + v \geq  0,\, v \leq 2\}\\
        &=\co\{(0,0),(2,2)\}+\cone\co\{(1,-1),(1,0)\},\\
T_5(x_2)&=\{(u,v)\::\: u\geq 0,\,v\geq 2\}\\
        &=(0,2)+\cone\co\{(1,0),(0,1)\},\\
T_5(x_3)&=\{(u,v)\::\: 2+u\leq v,\, u+v\geq 0,\, u\leq 0\}\\
        &=\co\{(0,2),(-1,1)\}+\cone\co\{(0,1),(-1,1)\},\\
T_5(x_4)&=\{(u,v)\::\: u \leq -1,\, u + v \leq 0,\, u \leq v\}\\
        &=\co\{(-1,1),(-1,-1)\}+\cone\co\{(-1,1),(-1,-1)\},\\
T_5(x_5)&=\{(u,v)\::\:v \leq 0,\, v \leq u,\, u + v \leq 0\}\\
        &=(0,0)+\cone\co\{(-1,-1),(1,-1)\}.
\end{align*}
Therefore $\mathcal{F}=\{(y_i,y_i^*)\}_{i=1}^8$ is given by
\begin{align*}
y_1&=(1, 0),&y_1^*&=(0,0),&y_2&=(1, 0),&y_2^*&=(2,2), \\
y_3&=(1,1),&y_3^*&=(0,2),&y_4&=(0,1),&y_4^*&=(0,2),\\
y_5&=(0,1),&y_5^*&=(-1,1),&y_6&=(-1,0),&y_6^*&=(-1,1),\\
y_7&=(-1,0),&y_7^*&=(-1,-1),&y_8&=(0,-1),&y_8^*&=(0,0),
\end{align*}
that is 
\begin{align*}
\dom(\mathcal{F})&=\{(1,0),(1,1),(0,1),(-1,0),(0,-1)\},\\
\ran(\mathcal{F})&=\{(0,0),(2,2),(0,2),(-1,1),(-1,-1)\}.
\end{align*}
We now consider $z_0=(x,y)$ and $z_0^*=(u,v)$, and follow the steps of the proof of Proposition~\ref{pro:opfinite}, so we obtain the set of inequalities:
\begin{equation}\label{eq:mx3m1234}
 \begin{aligned}
  \max\left\{
 \begin{array}{c}
0, -y - x - 1, 2y,\\ y - x + 1, 2y +2x - 2
\end{array}
 \right\}&\leq (x-1)u+yv,\\
  \max\left\{
 \begin{array}{c}
 -2, -y - x - 1, 2y - 2,\\ y - x - 1, 2y + 2x - 4
 \end{array}
 \right\}&\leq (x-1)u+(y-1)v,\\
 \max\left\{
 \begin{array}{c}
 0, -y - x - 1, 2y - 2,\\ y - x - 1, 2y + 2x - 4
 \end{array}
 \right\}&\leq xu+(y-1)v,\\
  \max\left\{
 \begin{array}{c}
0, -y - x - 1, 2y - 2,\\ y - x - 1, 2y + 2x - 2
 \end{array}
 \right\}&\leq (x+1)u+yv,\\
 \max\left\{
 \begin{array}{c}
0, -y - x - 1, 2y,\\ y - x - 1, 2y + 2x -2
 \end{array}
 \right\}&\leq xu+(y+1)v.
 \end{aligned}
\end{equation}
In addition, we have the domain conditions:
\[
x-1\leq y\leq x+1,\qquad -x-1\leq y \leq 1,\qquad x\leq 1.
\]
These conditions allow us to further simplify the equations in~\eqref{eq:mx3m1234}:
\begin{align}
 \widetilde{M}(z_0,x_1):=\max\{2y,y-x+1\}&\leq (x-1)u+yv,\label{eq:mx3m1}\\
 \widetilde{M}(z_0,x_2):=
 \max\left\{
 \begin{array}{c}
 -x - y - 1\\2y - 2\\ y - x - 1
 \end{array}
 \right\}&\leq (x-1)u+(y-1)v,\label{eq:mx3m2}\\
  \widetilde{M}(z_0,x_3):=0&\leq xu+(y-1)v,\label{eq:mx3m3}\\
 \widetilde{M}(z_0,x_4):=\max\{0,2x+2y-2\}&\leq (x+1)u+yv,\label{eq:mx3m4}\\
 \widetilde{M}(z_0,x_5):=\max\{0,2y\} &\leq xu+(y+1)v.\label{eq:mx3m5}
\end{align}

To determine the domain of $T_6$, recall that any $z_0\in C$ can be written as
\[
z_0=\sum_{i=1}^5\alpha_i x_i= (\alpha_1+\alpha_2-\alpha_4,\alpha_2+\alpha_3-\alpha_5),
\]
with $\alpha_i\geq 0$ and $\ds\sum_{i=1}^4\alpha_i=1$. 
\begin{itemize}
 \item Let $z_0\in\co\{x_1,x_2,x_3\}$ that is, $\alpha_4=\alpha_5=0$, $\alpha_1+\alpha_2+\alpha_3=1$, $\alpha_1,\alpha_2,\alpha_3\geq 0$ and $z_0=(\alpha_1+\alpha_2,\alpha_2+\alpha_3)=(\alpha_1+\alpha_2,1-\alpha_1)$. Thus
 \begin{align*}
  \sum_{i=1}^5\alpha_i \widetilde{M}(z_0,x_i)&=(1-\alpha_1-\alpha_2)0+\alpha_1\max\{2-2\alpha_1,2-2\alpha_1-\alpha_2\}\\
&\phantom{=}+\alpha_2\max\{-\alpha_2-2,-2\alpha_1,-2\alpha_1-\alpha_2\}\\
&=\alpha_1(2-2\alpha_1)+\alpha_2(-2\alpha_1)\\
&=2\alpha_1(1-\alpha_1-\alpha_2). 
 \end{align*}
We conclude that $\ds\sum_{i=1}^5\alpha_i \widetilde{M}(z_0,x_i)>0$ if, and only if, $\alpha_1>0$ and $1-\alpha_1-\alpha_2>0$. Since  $z_0=(x,y)=(\alpha_1+\alpha_2,1-\alpha_1)$, we obtain $x=\alpha_1+\alpha_2<1$ and $y=1-\alpha_1<1$. Therefore, all $z_0\in\co\{x_1,x_2,x_3\}$ such that $x<1$ and $y<1$ do not belong to $\dom(T_6)$.
 \item Let $z_0\in\co\{x_1,x_3,x_4\}$ that is, $\alpha_2=\alpha_5=0$, $\alpha_1+\alpha_3+\alpha_4=1$, $\alpha_1,\alpha_3,\alpha_4\geq 0$ and $z_0=(\alpha_1-\alpha_4,\alpha_3)=(\alpha_1-\alpha_4,1-\alpha_1-\alpha_4)$. Thus
 \begin{align*}
  \sum_{i=1}^5\alpha_i \widetilde{M}(z_0,x_i)&=\alpha_1\max\{2(1-\alpha_1-\alpha_4),2(1-\alpha_1)\}\\
  &\phantom{=}+(1-\alpha_1-\alpha_4)0+\alpha_4\max\{0,-4\alpha_4\}\\
  &=2\alpha_1(1-\alpha_1)
 \end{align*}
We conclude that $\ds\sum_{i=1}^5\alpha_i \widetilde{M}(z_0,x_i)>0$ if, and only if, $0<\alpha_1<1$. Since  $z_0=(x,y)=(\alpha_1-\alpha_4,1-\alpha_1-\alpha_4)$, we obtain $0<2\alpha_1=1+x-y<2$. Therefore, all $z_0\in\co\{x_1,x_3,x_4\}$ such that $x-1<y<x+1$ do not belong to $\dom(T_6)$.
\item Let $z_0\in\co\{x_1,x_4,x_5\}$ that is, $\alpha_2=\alpha_3=0$, $\alpha_1+\alpha_4+\alpha_5=1$, $\alpha_1,\alpha_4,\alpha_5\geq 0$ and $z_0=(\alpha_1-\alpha_4,-\alpha_5)=(\alpha_1-\alpha_4,\alpha_1+\alpha_4-1)$. Thus
 \begin{align*}
  \sum_{i=1}^5\alpha_i \widetilde{M}(z_0,x_i)&=\alpha_1\max\{2\alpha_4-2(1-\alpha_1),2\alpha_4\}\\
  &\phantom{=}+\alpha_4\max\{0,-4(1-\alpha_1)\}\\
  &\phantom{=}+(1-\alpha_1-\alpha_4)\max\{0,-2(1-\alpha_1-\alpha_4)\}\\
  &=2\alpha_1\alpha_4.
 \end{align*}
We conclude that $\ds\sum_{i=1}^5\alpha_i \widetilde{M}(z_0,x_i)>0$ if, and only if, $\alpha_1>0$ and $\alpha_4>0$. Since $z_0=(x,y)=(\alpha_1-\alpha_4,\alpha_1+\alpha_4-1)$, we obtain $2\alpha_1=1+x+y>0$ and $2\alpha_4=1-x+y>0$. Therefore, all $z_0\in\co\{x_1,x_3,x_4\}$ such that $-y-1<x<y+1$ do not belong to $\dom(T_6)$.
\end{itemize}
In view of the above calculations, we obtain that
\[
\dom(T_6)\subset[x_1,x_2]\cup[x_2,x_3]\cup[x_3,x_4]\cup[x_4,x_5]\cup[x_5,x_1].
\]
We now are going to prove the converse inclusion and, at the same time, find the correspondence rule of $T_6$. This will be done in several parts.
\begin{itemize}
 \item Let $z_0\in]x_1,x_2[$, that is $z_0=(1,t)$, $t\in]0,1[$. Replacing this in equations~\eqref{eq:mx3m1} through~\eqref{eq:mx3m5}, we obtain:
 \begin{align*}
 \eqref{eq:mx3m1}&\qquad\text{implies}\qquad 2\leq v,\\
 \eqref{eq:mx3m2}&\qquad\text{implies}\qquad 2\geq v,\\
 \eqref{eq:mx3m3}&\qquad\text{implies}\qquad 0\leq u+(t-1)v,\\
 \eqref{eq:mx3m4}&\qquad\text{implies}\qquad 2t\leq 2u+tv,\\
 \eqref{eq:mx3m5}&\qquad\text{implies}\qquad 2t\leq u+(t+1)v.
 \end{align*}
That is $v=2$ and $u\geq 2(1-t)$. Therefore
 \begin{align*}
 T_6(1,t)&=\{(u,v)\::\:v=2,\,u\geq 2-2t\}\\
 &=(2-2t,2)+\cone\{(1,0)\}.
 \end{align*}
 \item Let $z_0\in]x_2,x_3[$, that is $z_0=(1-t,1)$, $t\in]0,1[$. Replacing this in equations~\eqref{eq:mx3m1} through~\eqref{eq:mx3m5}, we obtain:
 \begin{align*}
 \eqref{eq:mx3m1}&\qquad\text{implies}\qquad 2\leq -tu+v,\\
 \eqref{eq:mx3m2}&\qquad\text{implies}\qquad u\leq 0,\\
 \eqref{eq:mx3m3}&\qquad\text{implies}\qquad 0\leq u,\\
 \eqref{eq:mx3m4}&\qquad\text{implies}\qquad 2(1-t) \leq (2-t)u+v,\\
 \eqref{eq:mx3m5}&\qquad\text{implies}\qquad 2\leq (1-t)u+2v.
 \end{align*}
That is $u=0$ and $v\geq 2$. Therefore
 \begin{align*}
 T_6(1-t,1)&=\{(u,v)\::\:u=0,\,v\geq 2\}\\
 &=(0,2)+\cone\{(0,1)\}.
 \end{align*}
 \item Let $z_0\in]x_3,x_4[$, that is $z_0=(-t,1-t)$, $t\in]0,1[$. Replacing this in equations~\eqref{eq:mx3m1} through~\eqref{eq:mx3m5}, we obtain:
 \begin{align*}
 \eqref{eq:mx3m1}&\qquad\text{implies}\qquad 2\leq -(t+1)u+(1-t)v,\\
 \eqref{eq:mx3m2}&\qquad\text{implies}\qquad 0\leq -(t+1)u-tv,\\
 \eqref{eq:mx3m3}&\qquad\text{implies}\qquad 0\geq u+v,\\
 \eqref{eq:mx3m4}&\qquad\text{implies}\qquad 0\leq u+v,\\
 \eqref{eq:mx3m5}&\qquad\text{implies}\qquad 2(1-t) \leq -tu+(2-t)v.
 \end{align*}
That is $v=-u$ and $u\leq -1$. Therefore
 \begin{align*}
 T_6(-t,1-t)&=\{(u,v)\::\:v=-u,\,u\leq -1\}\\
 &=(-1,1)+\cone\{(-1,1)\}.
 \end{align*}
  \item Let $z_0\in]x_4,x_5[$, that is $z_0=(t-1,-t)$, $t\in]0,1[$. Replacing this in equations~\eqref{eq:mx3m1} through~\eqref{eq:mx3m5}, we obtain:
 \begin{align*}
 \eqref{eq:mx3m1}&\qquad\text{implies}\qquad 2-2t\leq (t-2)u-tv,\\
 \eqref{eq:mx3m2}&\qquad\text{implies}\qquad 0\leq (t-2)u-(t+1)v,\\
 \eqref{eq:mx3m3}&\qquad\text{implies}\qquad 0\leq (t-1)u-(t+1)v,\\
 \eqref{eq:mx3m4}&\qquad\text{implies}\qquad 0\leq u-v,\\
 \eqref{eq:mx3m5}&\qquad\text{implies}\qquad 0\leq -u+v.
 \end{align*}
  That is $u=v$ and $u\leq t-1$. Therefore
 \begin{align*}
 T_6(t-1,-t)&=\{(u,v)\::\:u=v,\,u\leq t-1\}\\
 &=(t-1,t-1)+\cone\{(-1,-1)\}.
 \end{align*} 
 \item Let $z_0\in]x_5,x_1[$, that is $z_0=(t,t-1)$, $t\in]0,1[$. Replacing this in equations~\eqref{eq:mx3m1} through~\eqref{eq:mx3m5}, we obtain:
 \begin{align*}
 \eqref{eq:mx3m1}&\qquad\text{implies}\qquad 0\leq -u-v,\\
 \eqref{eq:mx3m2}&\qquad\text{implies}\qquad -2t\leq (t-1)u+(t-2)v,\\
 \eqref{eq:mx3m3}&\qquad\text{implies}\qquad 0\leq tu+(t-2)v,\\
 \eqref{eq:mx3m4}&\qquad\text{implies}\qquad 0\leq (t+1)u+(t-1)v,\\
 \eqref{eq:mx3m5}&\qquad\text{implies}\qquad 0\leq u+v.
 \end{align*} 
 That is $v=-u$ and $u\geq 0$. Therefore
 \begin{align*}
 T_6(t,t-1)&=\{(u,v)\::\:v=-u,\,u\geq 0\}\\
 &=(0,0)+\cone\{(1,-1)\}.
 \end{align*} 
\end{itemize}
Therefore
\[
\dom(T_6)=[x_1,x_2]\cup[x_2,x_3]\cup[x_3,x_4]\cup[x_4,x_5]\cup[x_5,x_1],
\]
and
\begin{align*}
T_6(1,0)&=\co\{(0,0),(2,2)\}+\cone\co\{(1,-1),(1,0)\},\\
T_6(1,1)&=(0,2)+\cone\co\{(1,0),(0,1)\},\\
T_6(0,1)&=\co\{(0,2),(-1,1)\}+\cone\co\{(0,1),(-1,1)\},\\
T_6(-1,0)&=\co\{(-1,1),(-1,-1)\}+\cone\co\{(-1,1),(-1,-1)\},\\
T_6(0,-1)&=(0,0)+\cone\co\{(-1,-1),(1,-1)\},
\end{align*}
and, for every $t\in]0,1[$,
\begin{align*}
T_6(1,t)&=(2-2t,2)+\cone\{(1,0)\},\\
T_6(1-t,1)&=(0,2)+\cone\{(0,1)\},\\
T_6(-t,1-t)&=(-1,1)+\cone\{(-1,1)\},\\
T_6(t-1,-t)&=(t-1,t-1)+\cone\{(-1,-1)\},\\
T_6(t,t-1)&=(0,0)+\cone\{(1,-1)\}.
\end{align*}
See Figure~\ref{fig:mx3m} for a graphical representation of the domain and range of $T_6$.

Note that $T_6$ can be written as
\[
T_6=S+N_C
\]
where $S=\ds\bigcup_{i=1}^8\co\{(y_i,y_i^*),(y_{i+1},y_{i+1}^*)\}$, considering $(y_9,y_9^*)=(y_1,y_1^*)$.
From Lemma~\ref{lem:orto}, the operator $T_6$ is 3-cyclically monotone and, since $T_6=T_5^{\mu_3}$, $T_6$ is maximal 3-cyclically monotone.

On the other hand, also from Lemma~\ref{lem:orto} and Proposition~\ref{pro:copolar},
\[
(0,0)\in\mathcal{F}^{\mu_2}|_C=S^{\mu_2}|_C=(S+N_C)^{\mu_2}=T_6^{\mu_2}.
\]
However $(0,0)\notin T_6$. Therefore, $T_6$ is not maximal 2-cyclically monotone.
\begin{figure}\centering
\begin{tikzpicture}[scale=1.5]
\node at (0,2.5) {$\dom(T_6)$};

\draw[semitransparent,->](-1.5,0)--(2,0)node[below right]{$x$};
\draw[semitransparent,->](0,-1.5)--(0,2)node[left]{$y$};
\draw[very thick] (1,0) -- (1,1) -- (0,1) -- (-1,0) -- (0,-1) -- cycle;

\fill
(1,0)circle(2pt)node[below]{\small $x_1$};
\fill
(-1,0)circle(2pt)node[below]{\small $x_4$};
\fill
(0,1)circle(2pt)node[above right]{\small $x_3$};
\fill
(0,-1)circle(2pt)node[left]{\small $x_5$};
\fill
(1,1)circle(2pt)node[right]{\small $x_2$};

\end{tikzpicture}
\qquad
\begin{tikzpicture}[scale=0.83]
\node at (0,5) {$\ran(T_6)$}; 

\fill[gray!30] 
(-3,3)--(-1,1)--(0,2)--(0,4)--(-3,4)-- cycle;
\fill[gray!30] 
(2,-2)--(0,0)--(2,2)--(3,2)--(3,-2) -- cycle;
\fill[gray!30] 
(0,4)--(0,2)--(3,2)--(3,4) -- cycle;
\fill[gray!30] 
(-2,-2)--(0,0)--(2,-2)-- cycle;
\fill[gray!30] 
(-3,3)--(-1,1)--(-1,-1)--(-2,-2)--(-3,-2) -- cycle;

\draw[very thick] (-2,-2) -- (0,0) -- (2,-2);
\draw[very thick] (0,0) -- (2,2) -- (3,2);
\draw[very thick] (2,2) -- (0,2);
\draw[very thick] (-3,3) -- (-1,1)-- (0,2) -- (0,4);
\draw[very thick] (-1,1) -- (-1,-1);



\draw[dashed,semitransparent,->](-3,0)--(3.1,0)node[below right]{$u$};
\draw[dashed,semitransparent,->](0,-2)--(0,4.1)node[left]{$v$};

\node[above] 
at (2,0) {\scriptsize $T_6(x_1)$};
\node[right] 
at (1,3) {\scriptsize $T_6(x_2)$};
\node 
at (-1.5,3) {\scriptsize $T_6(x_3)$};
\node[above] 
at (-2,0) {\scriptsize $T_6(x_4)$};
\node 
at (0,-1.5) {\scriptsize $T_6(x_5)$};

\fill 
(1,1) circle(2.5pt) node[right]
{\footnotesize $x_1^*$};
\fill 
(0,2) circle(2.5pt) node[below]
{\footnotesize $x_2^*$};
\fill 
(-1,1) circle(2.5pt) node[above]
{\footnotesize $x_3^*$};
\fill 
(-1,-1) circle(2.5pt) node[left] 
{\footnotesize $x_4^*$};
\fill 
(1,-1) circle(2.5pt) node[right] 
{\footnotesize $x_5^*$};


\end{tikzpicture}
\caption{A maximal 3-cyclically monotone operator}\label{fig:mx3m}
\end{figure}
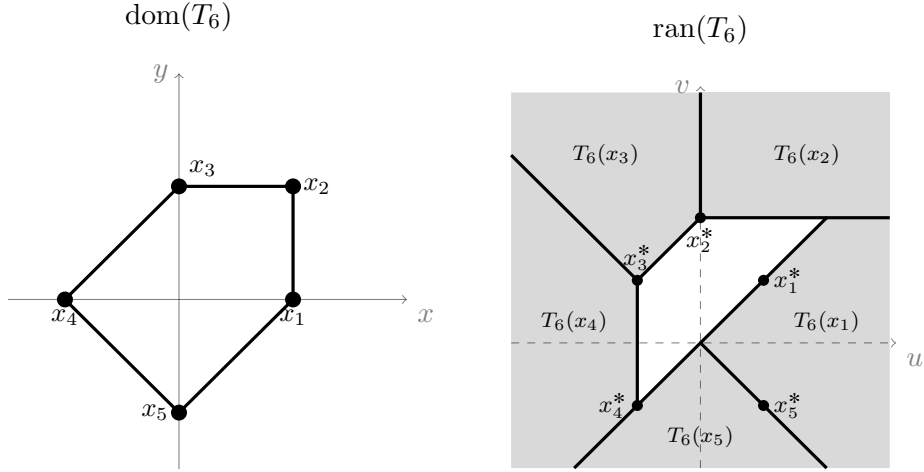

\subsection{A perturbation of Bauschke and Wang's example}
Consider the starting points:
\begin{align*}
x_1&=(1, 0),&x_1^*&=(0,1), \\
x_2&=(0, 1),&x_2^*&=(-1, 0),\\
x_3&=(-1,0),&x_3^*&=(-2,-2),\\
x_4&=(0,-1),&x_4^*&=(0,-1).
\end{align*}
These points are the same as in Section~\ref{sec:BW}, but considering $x_3^*=(-2,-2)$ instead of $(-1,-2)$.

After the first four steps of our algorithm, we obtain the following images of $x_1,\ldots,x_4$, 
\begin{align*}
T_4(x_1)&=\{(u,v)\::\: u\geq -1/2,\,-u-1\leq v\leq 1+u\}\\
        &=\co\{(-1/2,-1/2),(-1/2,1/2)\}+\cone\co\{(1,-1),(1,1)\},\\
T_4(x_2)&=\{(u,v)\::\:v\geq 0,\,-v-2\leq u\leq v-1 \}\\
        &=\co\{(-1,0),(-2,0)\}+\cone\co\{(1,1),(-1,1)\},\\
T_4(x_3)&=\{(u,v)\::\:u\leq -3/2,\,u\leq v\leq -u-2\}\\
        &=\co\{(-3/2,-1/2),(-3/2,-3/2)\}+\cone\co\{(-1,1),(-1,-1)\},\\
T_4(x_4)&=\{(u,v)\::\:v\leq -1,\,v\leq u\leq -v-1\}\\
        &=\co\{(-1,-1),(0,-1)\}+\cone\co\{(-1,-1),(1,-1)\}.
\end{align*}

Therefore $\mathcal{F}=\{(y_i,y_i^*)\}_{i=1}^8$ is given by
\begin{align*}
y_1&=(1, 0),&y_1^*&=(-1/2,-1/2),&y_2&=(1, 0),&y_2^*&=(-1/2,1/2), \\
y_3&=(0, 1),&y_3^*&=(-1, 0),&y_4&=(0,1),&y_4^*&=(-2,0),\\
y_5&=(-1,0),&y_5^*&=(-3/2,-1/2),&y_6&=(-1,0),&y_6^*&=(-3/2,-3/2),\\
y_7&=(0,-1),&y_7^*&=(-1,-1),&y_8&=(0,-1),&y_8^*&=(0,-1),
\end{align*}
that is 
\begin{align*}
\dom(\mathcal{F})&=\{(1,0),(0,1),(-1,0),(0,-1)\},\\
\ran(\mathcal{F})&=\left\{
\begin{array}{c}
(-1/2,-1/2),(-1/2,1/2),(-1,0),(-2,0),\\
(-3/2,-1/2),(-3/2,-3/2),(-1,-1),(0,-1)
\end{array}
\right\}. 
\end{align*}
We now consider $z_0=(x,y)$ and $z_0^*=(u,v)$, and follow the steps of the proof of Proposition~\ref{pro:opfinite}, so we obtain the set of inequalities:
\begin{equation}\label{eq:bwm1234}
 \begin{aligned}
  \max\left\{
 \begin{array}{c}
  1 - 2x, 1 - x, -x-y, -y,\\
  (y - x + 1)/2, -(x+y- 1)/2,\\
  -(3x+y + 1)/2, -(3x + 3y + 1)/2
\end{array}
 \right\}&\leq (x-1)u+yv,\\
  \max\left\{
 \begin{array}{c}
 -2x, -x, -y - 1, -y - x - 1,\\
 (y - x - 1)/2, -(x+y + 1)/2,\\
 -(3x+y - 1)/2, -(3x + 3y - 1)/2
 \end{array}
 \right\}&\leq xu+(y-1)v,\\
  \max\left\{
 \begin{array}{c}
  -x-2,-2x-2,-y-1,-y-x-1,\\
  (y-x-5)/2,-3(x+y+1)/2,\\
  -(x+y+5)/2,-(3x+y+3)/2
 \end{array}
 \right\}&\leq (x+1)u+yv,\\
 \max\left\{
 \begin{array}{c}
 -x - 2, -2x-2, -y - 1, -y - x - 1,\\
 (y - x - 1)/2, -3(x + y + 1)/2,\\
 -(x+y + 1)/2, -(3x+y + 3)/2
 \end{array}
 \right\}&\leq xu+(y+1)v.
 \end{aligned}
\end{equation}
In addition, we have the domain conditions:
\[
x-1\leq y\leq x+1,\qquad -x-1\leq y \leq 1-x,
\]
which allow us to obtain from~\eqref{eq:bwm1234} the simplified inequalities:
\begin{align}
 \widetilde{M}(z_0,x_1):=\max\{1-2x,1-x\}&\leq (x-1)u+yv,\label{eq:bwp1}\\
 \widetilde{M}(z_0,x_2):=\max\{(1-3x-3y)/2,(1-3x-y)/2\}&\leq xu+(y-1)v,\label{eq:bwp2}\\
 \widetilde{M}(z_0,x_3):=\max\{-1-y,-1-x-y\} &\leq (x+1)u+yv,\label{eq:bwp3}\\
 \widetilde{M}(z_0,x_4):=\max\{(-1-x-y)/2,(-1-x+y)/2\} &\leq xu+(y+1)v.\label{eq:bwp4}
\end{align}

To determine the domain of $T_5$, let $z_0\in C$ and write
\[
z_0=\sum_{i=1}^4\alpha_i x_i= (\alpha_1-\alpha_3,\alpha_2-\alpha_4),
\]
with $\alpha_i\geq 0$ and $\ds\sum_{i=1}^4\alpha_i=1$. By Corollary~\ref{cor:mtilde}, $z_0\in\dom(T_5)$ if, and only if, 
\[                                                                                                                                                                     
\sum_{i=1}^4\alpha_i \widetilde{M}(z_0,x_i)\leq 0.                                                                                                                                                                    \]
The determination of $\dom(T_5)$ will be done in several steps.
\begin{itemize}
 \item $(0,0)\in\dom(T_5)$. It is enough to replace $(x,y)=(0,0)$ on equations~\eqref{eq:bwp1} through~\eqref{eq:bwp4}, so we obtain
\[
1\leq -u,\quad -1\leq u,\quad 1/2\leq-v,\quad -1/2\leq v.
\]
 That is $(-1,-1/2)\in T_5(0,0)$ and $(0,0)\in\dom(T_5)$
\item $z_0\in]x_1,x_3[$ that is $z_0=\alpha_1(1,0)+\alpha_3(-1,0)=(\alpha_1-\alpha_3,0)$, with $\alpha_1,\alpha_3>0$ and $\alpha_1+\alpha_3=1$. Thus
 \begin{align*}
 \sum_{i=1}^4&\alpha_i \widetilde{M}(z_0,x_i)\\
 &=\alpha_1\max\{3-4\alpha_1,2-2\alpha_1\}+(1-\alpha_1)\max\{-1,-2\alpha_1\}\\
 &=(2\alpha_1-1)(1-\alpha_1)+\max\{0,1-2\alpha_1\}\\
 &=\begin{cases}
    \alpha_1(1-2\alpha_1),&\text{if }\alpha_1\leq 1/2,\\
    (2\alpha_1-1)(1-\alpha_1),&\text{if }\alpha_1>1/2.
   \end{cases}
 \end{align*}
Therefore, $\ds\sum_{i=1}^4\alpha_i \widetilde{M}(z_0,x_i)>0$ whenever $\alpha_1\neq 1/2$. This implies that $]x_1,x_3[\cap\dom(T_5)=\{(0,0)\}$.

\item Let $z_0\in\co\{x_1,x_2,x_4\}$, that is $\alpha_3=0$ and $\alpha_1+\alpha_2+\alpha_4=1$ with $\alpha_1,\alpha_2,\alpha_4\geq 0$. Therefore $z_0=(1-\alpha_2-\alpha_4,\alpha_2-\alpha_4)$ and
 \begin{align*}
 \sum_{i=1}^4&\alpha_i \widetilde{M}(z_0,x_i)\\
 &=\alpha_2\max\{\alpha_2 + 2 \alpha_4-1, 3 \alpha_4-1\}+\alpha_4\max\{\alpha_2-1,\alpha_4-1\}\\
 &\phantom{=}+(1-\alpha_2-\alpha_4)\max\{-1+2\alpha_2+2\alpha_4,\alpha_2+\alpha_4\}\\
 &=\alpha_2(2\alpha_4-1+\max\{\alpha_2,\alpha_4\})+\alpha_4(-1+\max\{\alpha_2,\alpha_4\})\\
 &\phantom{=}+(1-\alpha_2-\alpha_4)(\alpha_2+\alpha_4+\max\{-1+\alpha_2+\alpha_4,0\})\\
 &=-\alpha_2^2-\alpha_4^2+(\alpha_2+\alpha_4)\max\{\alpha_2,\alpha_4\},
 \end{align*}
 that is
 \[
  \sum_{i=1}^4\alpha_i \widetilde{M}(z_0,x_i)=
  \begin{cases}
  \alpha_4(\alpha_2-\alpha_4),&\text{if }\alpha_2\geq\alpha_4,\\
  \alpha_2(\alpha_4-\alpha_2),&\text{if }\alpha_2<\alpha_4.
  \end{cases}
 \]
Therefore $\ds\sum_{i=1}^4\alpha_i \widetilde{M}(z_0,x_i)\leq 0$ if, and only if, $\alpha_2=0$ and $\alpha_4>0$, or $\alpha_4=\alpha_2$, or $\alpha_4=0$ and $\alpha_2>0$.
\begin{itemize}
\item If $\alpha_2=0$ then $z_0\in[x_4,x_1]$. In this case, the coefficients $\alpha_i$ are unique, so this implies that $z_0\in\dom(T_5)$.
\item If $\alpha_4=0$ then $z_0\in[x_1,x_2]$. In this case, the coefficients $\alpha_i$ are also unique, so this implies that $z_0\in\dom(T_5)$.
\item If $\alpha_2=\alpha_4$ then $z_0$ is in the $x$-axis. We already proved that $z_0$ cannot be in $\dom(T_5)$ unless $z_0=(0,0)$.
\end{itemize}
In any other case, $z_0\notin\dom(T_5)$. This in particular implies that  the interior of the triangle $\co\{x_1,x_2,x_4\}$ and its vertical side, without the origin, does not intersect the domain of $T_5$.

\item Let $z_0\in\co\{x_2,x_3,x_4\}$, that is $\alpha_1=0$ and $\alpha_2+\alpha_3+\alpha_4=1$ with $\alpha_2,\alpha_3,\alpha_4\geq 0$. Therefore $z_0=(\alpha_2+\alpha_4-1,\alpha_2-\alpha_4)$ and
 \begin{align*}
 \sum_{i=1}^4&\alpha_i \widetilde{M}(z_0,x_i)\\
 &=\alpha_2\max\{2-3\alpha_2,2-2\alpha_2-\alpha_4\}+\alpha_4\max\{-\alpha_2,-\alpha_4\}\\
 &\phantom{=}+(1-\alpha_2-\alpha_4)\max\{-2\alpha_2,-1-\alpha_2+\alpha_4\}\\
 &=\alpha_2(2(1-\alpha_2)+\max\{-\alpha_2,-\alpha_4\})+\alpha_4\max\{-\alpha_2,-\alpha_4\}\\
 &\phantom{=}+(1-\alpha_2-\alpha_4)(-2\alpha_2+\max\{0,-1+\alpha_2+\alpha_4\})\\
 &=2\alpha_2\alpha_4+(\alpha_2+\alpha_4)\max\{-\alpha_2,-\alpha_4\},
 \end{align*}
 that is
 \[
  \sum_{i=1}^4\alpha_i \widetilde{M}(z_0,x_i)=
  \begin{cases}
  \alpha_4(\alpha_2-\alpha_4),&\text{if }\alpha_2\geq\alpha_4,\\
  \alpha_2(\alpha_4-\alpha_2),&\text{if }\alpha_2<\alpha_4.
  \end{cases}
 \]
Again $\ds\sum_{i=1}^4\alpha_i \widetilde{M}(z_0,x_i)\leq 0$ if, and only if, $\alpha_2=0$ and $\alpha_4>0$, or $\alpha_4=\alpha_2$, or $\alpha_4=0$ and $\alpha_2>0$. As before, this implies that the segments $[x_2,x_3]$ and $[x_3,x_4]$ belong to the domain of $T_5$ and no point in the interior of the triangle $\co\{x_2,x_3,x_4\}$ belongs to the domain of $T_5$.
\end{itemize}
In conclusion, 
\[
\dom(T_5)=\{(0,0)\}\cup [x_1,x_2]\cup [x_2,x_3] \cup [x_3,x_4] \cup [x_4,x_1].
\]
We now will deduce the formula of $T_5$.
\begin{itemize}
 \item $T_5(0,0)=(-1,-1/2)$, as we already verified.
\item $z_0\in ]x_1,x_2[\iff z_0=(1-t,t),\,t\in]0,1[$. Replacing this in equations~\eqref{eq:bwp1} through~\eqref{eq:bwp4}, we obtain:
 \begin{align*}
 \eqref{eq:bwp1}&\qquad\text{implies}\qquad 1\leq v-u,\\
 \eqref{eq:bwp2}&\qquad\text{implies}\qquad -1\leq u-v,\\
 \eqref{eq:bwp3}&\qquad\text{implies}\qquad -t-1\leq (2-t)u+tv,\\
 \eqref{eq:bwp4}&\qquad\text{implies}\qquad t-1\leq (1-t)u+(1+t)v.
 \end{align*}
 That is $v=1+u$, $u\geq -1$, $u\geq -t-1/2$. Therefore
 \begin{align*}
 T_5(1-t,t)&=\{(u,v)\::\:v=1+u,\,u\geq \max\{-1,-t-1/2\}\}\\
 &=\begin{cases}
    \{(u,v)\::\:v=1+u,\,u\geq -t-1/2\},&\text{if }t\leq 1/2\\
    \{(u,v)\::\:v=1+u,\,u\geq -1\},&\text{if }t>1/2
   \end{cases}\\
 &=\begin{cases}
    (-t-1/2,-t+1/2)+\cone\{(1,1)\},&\text{if }t\leq 1/2\\
    (-1,0)+\cone\{(1,1)\},&\text{if }t>1/2\\
   \end{cases}
 \end{align*}

 \item $z_0\in ]x_2,x_3[\iff z_0=(-t,1-t),\,t\in]0,1[$. Replacing this in equations~\eqref{eq:bwp1} through~\eqref{eq:bwp4}, we obtain:
 \begin{align*}
 \eqref{eq:bwp1}&\qquad\text{implies}\qquad 1+2t\leq -(1+t)u+(1-t)v,\\
 \eqref{eq:bwp2}&\qquad\text{implies}\qquad 2\leq -u-v,\\
 \eqref{eq:bwp3}&\qquad\text{implies}\qquad -2\leq u+v,\\
 \eqref{eq:bwp4}&\qquad\text{implies}\qquad 0\leq -tu+(2-t)v.
 \end{align*}
 That is $u+v=-2$, $v\geq -1/2$, $v\geq -t$. Therefore
 \begin{align*}
 T_5(-t,1-t)&=\{(u,v)\::\:u+v=-2,\,v\geq \max\{-1/2,-t\}\}\\
 &=\begin{cases}
    \{(u,v)\::\:u+v=-2,\,v\geq -t\},&\text{if }t\leq 1/2\\
    \{(u,v)\::\:u+v=-2,\,v\geq -1/2\},&\text{if }t>1/2
   \end{cases}\\
 &=\begin{cases}
    (t-2,-t)+\cone\{(-1,1)\},&\text{if }t\leq 1/2\\
    (-3/2,-1/2)+\cone\{(-1,1)\},&\text{if }t>1/2\\
   \end{cases}
 \end{align*}
 \item $z_0\in ]x_3,x_4[\iff z_0=(t-1,-t),\,t\in]0,1[$. Replacing this in equations~\eqref{eq:bwp1} through~\eqref{eq:bwp4}, we obtain:
 \begin{align*}
 \eqref{eq:bwp1}&\qquad\text{implies}\qquad 3-2t\leq (t-2)u-tv,\\
 \eqref{eq:bwp2}&\qquad\text{implies}\qquad 2\leq (t-1)u-(t+1)v,\\
 \eqref{eq:bwp3}&\qquad\text{implies}\qquad 0\leq u-v,\\
 \eqref{eq:bwp4}&\qquad\text{implies}\qquad 0\leq -u+v.
 \end{align*}
 That is $u=v$, $v\leq -1$, $v\leq t-3/2$. Therefore
 \begin{align*}
 T_5(t-1,-t)&=\{(u,v)\::\:u=v,\,v\leq \min\{-1,t-3/2\}\}\\
 &=\begin{cases}
    \{(u,v)\::\:u=v,\,v\leq t-3/2\},&\text{if }t\leq 1/2\\
    \{(u,v)\::\:u=v,\,v\leq -1\},&\text{if }t>1/2
   \end{cases}\\
 &=\begin{cases}
    (t-3/2,t-3/2)+\cone\{(-1,-1)\},&\text{if }t\leq 1/2\\
    (-1,-1)+\cone\{(-1,-1)\},&\text{if }t>1/2\\
   \end{cases}
 \end{align*}
  \item $z_0\in ]x_4,x_1[\iff z_0=(t,t-1),\,t\in]0,1[$. Replacing this in equations~\eqref{eq:bwp1} through~\eqref{eq:bwp4}, we obtain:
 \begin{align*}
 \eqref{eq:bwp1}&\qquad\text{implies}\qquad 1\leq -u-v,\\
 \eqref{eq:bwp2}&\qquad\text{implies}\qquad 2-3t\leq tu+(t-2)v,\\
 \eqref{eq:bwp3}&\qquad\text{implies}\qquad -t\leq (t+1)u+(t-1)v,\\
 \eqref{eq:bwp4}&\qquad\text{implies}\qquad -1\leq u+v.
 \end{align*}
 That is $u+v=-1$, $v\leq t-1$, $v\leq -1/2$. Therefore
 \begin{align*}
 T_5(t,t-1)&=\{(u,v)\::\:u+v=-1,\,v\leq \min\{-1/2,t-1\}\}\\
 &=\begin{cases}
    \{(u,v)\::\:u+v=-1,\,v\leq t-1\},&\text{if }t\leq 1/2\\
    \{(u,v)\::\:u+v=-1,\,v\leq -1/2\},&\text{if }t>1/2
   \end{cases}\\
 &=\begin{cases}
    (-t,t-1)+\cone\{(1,-1)\},&\text{if }t\leq 1/2\\
    (-1/2,-1/2)+\cone\{(1,-1)\},&\text{if }t>1/2\\
   \end{cases}
 \end{align*}
 \end{itemize}
Altogether, we obtain the following correspondence rule:
\begin{align*}
 T_5(1,0)&=\co\{(-1/2,-1/2),(-1/2,1/2)\}+\cone\co\{(1,-1),(1,1)\},\\
T_5(0,1)&=\co\{(-1,0),(-2,0)\}+\cone\co\{(1,1),(-1,1)\},\\
T_5(-1,0)&=\co\{(-3/2,-1/2),(-3/2,-3/2)\}+\cone\co\{(-1,1),(-1,-1)\},\\
T_5(0,-1)&=\co\{(-1,-1),(0,-1)\}+\cone\co\{(-1,-1),(1,-1)\},\\
T_5(0,0)&=(-1,-1/2),
\end{align*}
and, for all $t\in]0,1[$,
\begin{align*}
 T_5(1-t,t)&=\begin{cases}
    (-t-1/2,-t+1/2)+\cone\{(1,1)\},&\text{if }t\leq 1/2\\
    (-1,0)+\cone\{(1,1)\},&\text{if }t>1/2\\
   \end{cases}\\
 T_5(-t,1-t)&=\begin{cases}
    (t-2,-t)+\cone\{(-1,1)\},&\text{if }t\leq 1/2\\
    (-3/2,-1/2)+\cone\{(-1,1)\},&\text{if }t>1/2\\
   \end{cases}\\
 T_5(t-1,-t)&=\begin{cases}
    (t-3/2,t-3/2)+\cone\{(-1,-1)\},&\text{if }t\leq 1/2\\
    (-1,-1)+\cone\{(-1,-1)\},&\text{if }t>1/2\\
   \end{cases}\\
 T_5(t,t-1)&=\begin{cases}
    (-t,t-1)+\cone\{(1,-1)\},&\text{if }t\leq 1/2\\
    (-1/2,-1/2)+\cone\{(1,-1)\},&\text{if }t>1/2\\
   \end{cases}
\end{align*}
See Figure~\ref{fig:bwp} for a graphical representation of the domain and range.

Note that the operator $T_5$ can be written as
\[
T_5=\left[\bigcup_{i=1}^8\co\{(y_i,y_i^*),(y_{i+1},y_{i+1}^*)\}+N_C\right]\cup\{((0,0),(-1,-1/2))\}.
\]
From Lemma~\ref{lem:orto}, the operator $S=\ds\bigcup_{i=1}^8\co\{(y_i,y_i^*),(y_{i+1},y_{i+1}^*)\}+N_C$ is 2-cyclically monotone. Also from Lemma~\ref{lem:orto}, $\mathcal{F}^{\mu_p}=S^{\mu_p}$. Since $((0,0),(-1,-1/2))\in\mathcal{F}^{\mu_p}$, we conclude that $T_5$ is also 2-cyclically monotone. Therefore $T_5$ is maximal 2-cyclically monotone.
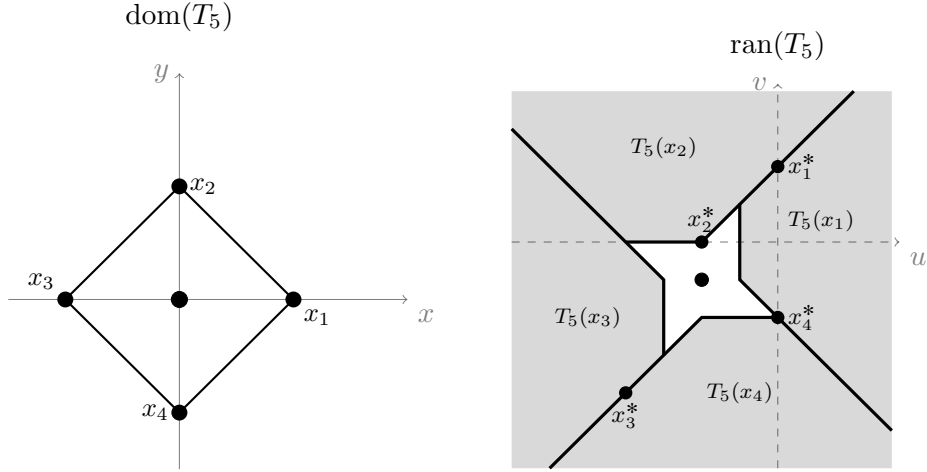
\begin{figure}
\begin{center}
\begin{tikzpicture}[scale=1.5]
\node at (0,2.5) {$\dom(T_5)$};

\draw[semitransparent,->](-1.5,0)--(2,0)node[below right]{$x$};
\draw[semitransparent,->](0,-1.5)--(0,2)node[left]{$y$};

\draw[thick] (1,0) -- (0,1) -- (-1,0) -- (0,-1) -- cycle;
\draw[fill] (0,0) circle (2pt);

\fill
(1,0)circle(2pt)node[below right]{\small $x_1$};
\fill
(-1,0)circle(2pt)node[above left]{\small $x_3$};
\fill
(0,1)circle(2pt)node[right]{\small $x_2$};
\fill
(0,-1)circle(2pt)node[left]{\small $x_4$};
\end{tikzpicture}
\qquad
\begin{tikzpicture}
\node at (0,2.6) {$\ran(T_5)$}; 

\fill[gray!30]
(1.5,2) -- (1,2) --  (-0.5,0.5) -- (-0.5,-0.5) -- (1.5,-2.5) -- cycle;
\fill[gray!30]
(1,2) --  (-1,0) --  (-2,0) -- (-3.5,1.5) -- (-3.5,2) -- cycle;
\fill[gray!30]
(-3.5,1.5) -- (-1.5,-0.5) -- (-1.5,-1.5) -- (-3,-3) -- (-3.5,-3) -- cycle;
\fill[gray!30]
(-3,-3) -- (-1,-1) -- (0,-1) -- (1.5,-2.5) -- (1.5,-3) -- cycle;

\draw[very thick] (-3.5,1.5) -- (-1.5,-0.5) -- (-1.5,-1.5) -- (-3,-3);
\draw[very thick] (1,2) --  (-0.5,0.5) -- (-0.5,-0.5) -- (1.5,-2.5);

\draw[very thick] (-1.5,-1.5) -- (-1,-1) -- (0,-1);
\draw[very thick] (-0.5,0.5) --  (-1,0) --  (-2,0);
%
%
\draw[fill] (-1,-0.5) circle (2.5pt);

\draw[dashed,semitransparent,->](-3.5,0)--(1.6,0)node[below right]{$u$};
\draw[dashed,semitransparent,->](0,-3)--(0,2.1)node[left]{$v$};

\fill
(0,1)circle(2.5pt)node[right]{\footnotesize $x_1^*$};
\fill
(-2,-2)circle(2.5pt)node[below]{\footnotesize $x_3^*$};
\fill
(-1,0)circle(2.5pt)node[above]{\footnotesize $x_2^*$};
\fill
(0,-1)circle(2.5pt)node[right]{\footnotesize $x_4^*$};

\node[above right] 
at (0,0) {\scriptsize $T_5(x_1)$};
\node 
at (-1.5,1.25) {\scriptsize $T_5(x_2)$};
\node 
at (-2.5,-1) {\scriptsize $T_5(x_3)$};
\node 
at (-0.5,-2) {\scriptsize $T_5(x_4)$};

\end{tikzpicture}
\end{center}
\caption{Perturbation of Bauschke and Wang example}\label{fig:bwp}
\end{figure}
\begin{remark}
It is possible to verify that if we use $x_5=(0,0)$, $x_5^*=(-1,-1/2)$ as an additional starting point, we would have obtained the same operator. 
\end{remark}

\subsection{Another bizarre maximal 2-cyclically monotone operator}
Consider the starting points:
\begin{align*}
x_1&=(0,0)&x_1^*&=(-1,-1/2),\\
x_2&=(1, 0)&x_2^*&=(0,1), \\
x_3&=(0, 1)&x_3^*&=(-1, 0),\\
x_4&=(-1,0)&x_4^*&=(-2, 2),\\
x_5&=(0,-1)&x_5^*&=(0,-1).
\end{align*}
Using our procedure, we deduce the following images of $x_1,\ldots,x_5$:
\begin{align*}
T_5(x_1)&=\{-1\}\times[-1,0],\\
        &=\co\{(-1,-1),(-1,0)\},\\
T_5(x_2)&=\{(u,v)\::\:u\geq 0,\,-1-u\leq v\leq 1+u\},\\
        &=\co\{(0,-1),(0,1)\}+\cone\{(1,-1),(1,1)\},\\
T_5(x_3)&=\{(u,v)\::\:v\geq 0,\,-2-v\leq u\leq v-1\},\\
        &=\co\{(-1,0),(-2,0)\}+\cone\{(1,1),(-1,1)\},\\
T_5(x_4)&=\{(u,v)\::\:u\leq -2,\,u\leq v\leq -2-u\},\\
        &=\co\{(-2,0),(-2,-2)\}+\cone\{(-1,1),(-1,-1)\},\\
T_5(x_5)&=\{(u,v)\::\:v\leq -1,\,v\leq u\leq -1-v\},\\
        &=\co\{(-1,-1),(0,-1)\}+\cone\{(-1,-1),(1,-1)\}.
\end{align*}
Thus, $\mathcal{F}=\{(y_i,y_i^*)\}_{i=1}^{10}$ is given by
\begin{align*}
y_1&=(0,0)&y_1^*&=(-1,-1),&y_2&=(0, 0)&y_2^*&=(-1,0), \\
y_3&=(1,0)&y_3^*&=(0,-1),&y_4&=(1,0)&y_4^*&=(0,1),\\
y_5&=(0,1)&y_5^*&=(-1,0),&y_6&=(0,1)&y_6^*&=(-2,0),\\
y_7&=(-1,0)&y_7^*&=(-2,0),&y_8&=(-1,0)&y_8^*&=(-2,-2),\\
y_9&=(0,-1)&y_9^*&=(-1,-1),&y_{10}&=(0,-1)&y_{10}^*&=(0,-1),\\
\end{align*}
that is
\begin{align*}
\dom(\mathcal{F})&=\{(0,0),(1,0),(0,1),(-1,0),(0,-1)\}\\
\ran(\mathcal{F})&=\{(-1,-1),(-1,0),(0,-1),(0,1),(-2,0),(-2,-2)\}. 
\end{align*}
We now consider $z_0=(x,y)$ and $z_0^*=(u,v)$, and follow the steps of the proof of Proposition~\ref{pro:opfinite}, so we obtain the set of inequalities:
\begin{equation}\label{eq:bwb}
\begin{aligned}
 \max\left\{\begin{array}{c}-2x,-x,-2x-2y-1,\\-x-y,y-1,-y\end{array}\right\}&\leq xu+yv\\
 \max\left\{\begin{array}{c}1-2x,1-x,-x-y,\\-y,y,-2x-2y-2\end{array}\right\}&\leq (x-1)u+yv\\
 \max\left\{\begin{array}{c}-2x,-x,-y-1,-x-y,\\y-1,-2x-2y\end{array}\right\}&\leq xu+(y-1)v\\
 \max\left\{\begin{array}{c}-x-2,-2x-2,-y-1,y-4,\\-x-y-1,-2x-2y-2\end{array}\right\}&\leq (x+1)u+yv\\
 \max\left\{\begin{array}{c}-x-1,-2x-2,-y-1,y-1,\\-x-y-1,-2x-2y-2\end{array}\right\}&\leq xu+(y+1)v.
\end{aligned}
\end{equation}
Equations~\eqref{eq:bwb}, along with the domain conditions:
\[
x-1\leq y\leq x+1,\qquad -x-1\leq y \leq 1-x.
\]
allow us to obtain the simplified equations:
\begin{gather}
 \widetilde{M}(z_0,x_1):=\max\{-2x,-x,-x-y,-y\}\leq xu+yv,\label{eq:bwb1}\\
 \widetilde{M}(z_0,x_2):=\max\{1-2x,1-x\}\leq (x-1)u+yv,\label{eq:bwb2}\\
 \widetilde{M}(z_0,x_3):=\max\{-2x,-x,-x-y,-2x-2y\}\leq xu+(y-1)v,\label{eq:bwb3}\\
 \widetilde{M}(z_0,x_4):=\max\{-1-y,-1-x-y\}\leq (x+1)u+yv,\label{eq:bwb4}\\
  \widetilde{M}(z_0,x_5):=\max\{-1-x,-1-y,-1-x-y,-1+y\}\leq xu+(y+1)v.\label{eq:bwb5}
\end{gather}

To determine the domain of $T_5$, let $z_0\in C$ and write
\[
z_0=\sum_{i=1}^5\alpha_i x_i= (\alpha_2-\alpha_4,\alpha_3-\alpha_5),
\]
with $\alpha_i\geq 0$ and $\ds\sum_{i=1}^5\alpha_i=1$. 
The determination of $\dom(T_5)$ will be done in several steps.
\begin{itemize}
 \item Let $z_0\in\co\{x_1,x_2,x_3\}$, that is, $\alpha_4=\alpha_5=0$, $\alpha_1+\alpha_2+\alpha_3=1$, $\alpha_1,\alpha_2,\alpha_3\geq 0$ and $z_0=(\alpha_2,\alpha_3)$. Thus
 \begin{align*}
  \sum_{i=1}^5\alpha_i \widetilde{M}(z_0,x_i)&=(1-\alpha_2-\alpha_3)\max\{-2\alpha_2,-\alpha_2,-\alpha_2-\alpha_3,-\alpha_3\}\\
  \phantom{=}+\alpha_2\max\{1-&2\alpha_2,1-\alpha_2\}+\alpha_3\max\{-2\alpha_2,-\alpha_2,-\alpha_2-\alpha_3,-2\alpha_2-2\alpha_3\}\\
  &=(1-\alpha_2-\alpha_3)\max\{-\alpha_2,-\alpha_3\}+\alpha_2(1-\alpha_2)+\alpha_3(-\alpha_2)\\
  &=(1-\alpha_2-\alpha_3)(\max\{-\alpha_2,-\alpha_3\}+\alpha_2)\\
  &=\begin{cases}
    0&\text{if }\alpha_2\leq \alpha_3,\\
    (\alpha_2-\alpha_3)(1-\alpha_2-\alpha_3)&\text{if }\alpha_2>\alpha_3.\\
    \end{cases}
 \end{align*}
We conclude that $\ds\sum_{i=1}^5\alpha_i \widetilde{M}(z_0,x_i)>0$ if, and only if, $\alpha_2>\alpha_3$ and $1-\alpha_2-\alpha_3>0$. Therefore, all $z_0=(x,y)$ such that $x>y\geq 0$ and $x+y<1$ do not belong to $\dom(T_6)$.
 \item Let $z_0\in\co\{x_1,x_3,x_4\}$, that is, $\alpha_2=\alpha_5=0$, $\alpha_1+\alpha_3+\alpha_4=1$, $\alpha_1,\alpha_3,\alpha_4\geq 0$ and $z_0=(-\alpha_4,\alpha_3)$. Thus
 \begin{align*}
  \sum_{i=1}^5\alpha_i \widetilde{M}(z_0,x_i)&=(1-\alpha_3-\alpha_4)\max\{2\alpha_4,\alpha_4,\alpha_4-\alpha_3,-\alpha_3\}\\
  &\phantom{=}+\alpha_3\max\{2\alpha_4,\alpha_4,\alpha_4-\alpha_3,2\alpha_4-2\alpha_3\}\\
  &\phantom{=}+\alpha_4\max\{-1-\alpha_3,-1+\alpha_4-\alpha_3\}\\
  &=(1-\alpha_3-\alpha_4)(2\alpha_4)+\alpha_3(2\alpha_4)+\alpha_4(-1+\alpha_4-\alpha_3)\\
  &=\alpha_4(1-\alpha_4-\alpha_3)
 \end{align*}
Therefore $\ds\sum_{i=1}^5\alpha_i \widetilde{M}(z_0,x_i)>0$ if, and only if, $\alpha_4>0$ and $\alpha_3+\alpha_4<1$. Therefore, all $z_0=(x,y)$ such that $x<0$ and $y<1+x$ do not belong to $\dom(T_6)$.
 \item Let $z_0\in\co\{x_1,x_4,x_5\}$, that is, $\alpha_2=\alpha_3=0$, $\alpha_1+\alpha_4+\alpha_5=1$, $\alpha_1,\alpha_4,\alpha_5\geq 0$ and $z_0=(-\alpha_4,-\alpha_5)$. Thus
 \begin{align*}
  \sum_{i=1}^5\alpha_i \widetilde{M}(z_0,x_i)&=(1-\alpha_4-\alpha_5)\max\{2\alpha_4,\alpha_4,\alpha_4+\alpha_5,\alpha_5\}\\
  &\phantom{=}+\alpha_4\max\{-1+\alpha_5,-1+\alpha_4+\alpha_5\}\\
  &\phantom{=}+\alpha_5\max\{-1+\alpha_4,-1+\alpha_5,-1+\alpha_4+\alpha_5,-1-\alpha_5\}\\
  &=(1-\alpha_4-\alpha_5)(\alpha_4+\max\{\alpha_4,\alpha_5\})+\alpha_4(-1+\alpha_4+\alpha_5)\\
  &\phantom{=}+\alpha_5(-1+\alpha_4+\alpha_5)\\
  &=(1-\alpha_4-\alpha_5)(\max\{\alpha_4,\alpha_5\}-\alpha_5)\\
  &=\begin{cases}
    0&\text{if }\alpha_4\leq \alpha_5,\\
    (\alpha_4-\alpha_5)(1-\alpha_4-\alpha_5)&\text{if }\alpha_4>\alpha_5.\\
    \end{cases}
 \end{align*}
We conclude that $\ds\sum_{i=1}^5\alpha_i \widetilde{M}(z_0,x_i)>0$ if, and only if, $\alpha_4>\alpha_5$ and $1-\alpha_4-\alpha_5>0$. Therefore, all $z_0=(x,y)$ such that $x<y\leq 0$ and $x+y>-1$ do not belong to $\dom(T_6)$.
 \item Let $z_0\in\co\{x_1,x_2,x_5\}$, that is, $\alpha_3=\alpha_4=0$, $\alpha_1+\alpha_2+\alpha_5=1$, $\alpha_1,\alpha_2,\alpha_5\geq 0$ and $z_0=(\alpha_2,-\alpha_5)$. Thus
 \begin{align*}
  \sum_{i=1}^5\alpha_i \widetilde{M}(z_0,x_i)&=(1-\alpha_2-\alpha_5)\max\{-2\alpha_2,-\alpha_2,-\alpha_2+\alpha_5,\alpha_5\}\\
  &\phantom{=}+\alpha_2\max\{1-2\alpha_2,1-\alpha_2\}\\
  &\phantom{=}+\alpha_5\max\{-1-\alpha_2,-1+\alpha_5,-1-\alpha_2+\alpha_5,-1-\alpha_5\}\\
  &=(1-\alpha_2-\alpha_5)\alpha_5+\alpha_2(1-\alpha_2)+\alpha_5(-1+\alpha_5)\\
  &=\alpha_2(1-\alpha_2-\alpha_5)
 \end{align*}
Therefore $\ds\sum_{i=1}^5\alpha_i \widetilde{M}(z_0,x_i)>0$ if, and only if, $\alpha_2>0$ and $\alpha_2+\alpha_5<1$. Therefore, all $z_0=(x,y)$ such that $x>0$ and $y>x-1$ do not belong to $\dom(T_6)$.
\end{itemize}

Up to now, we have proved that $\dom(T_6)$ is contained in
\[
\bigcup_{i=2}^5[x_i,x_{i+1}]\cup \{(x,y)\::\:|x|\leq |y|\leq 1-|x|,\,xy\geq 0\}.
\]
considering $x_6=x_2$. Next, we will prove that this set is exactly $\dom(T_6)$ and also find the correspondence rule for $T_6$.
\begin{itemize}
 \item Let $z_0\in]x_3,x_5[\setminus\{x_1\}\iff z_0=(0,1-2t),\,t\in]0,1[,t\neq 1/2$. Thus
 \begin{align*}
 \eqref{eq:bwb1}&\quad\text{implies}\quad\max\{0,2t-1\}\leq (1-2t)v,\\
 \eqref{eq:bwb2}&\quad\text{implies}\quad 1\leq -u+(1-2t)v,\\
 \eqref{eq:bwb3}&\quad\text{implies}\quad\max\{0,4t-2\}\leq -2tv,\\
  \eqref{eq:bwb4}&\quad\text{implies}\quad 2t-2\leq u+(1-2t)v,\\
 \eqref{eq:bwb5}&\quad\text{implies}\quad\max\{2t-2,-2t\}\leq 2(1-t)v.
\end{align*}
If $t<1/2$, 
these equations turn into
$v=0$, $2t-2\leq u\leq -1$ so
\[
T_6(0,1-2t)=[2t-2,-1]\times\{0\}.
\]
Now, if $t>1/2$, we obtain
$v=-1$, $-1\leq u\leq 2t-2$, so 
\[
T_6(0,1-2t)=[-1,2t-2]\times\{-1\}.
\]
\item Let $z_0\in ]x_2,x_3[\iff z_0=(1-t,t),\,t\in]0,1[$. Thus
 \begin{align*}
 \eqref{eq:bwb1}&\quad\text{implies}\quad\max\{t-1,-t\}\leq (1-t)u+tv,\\
 \eqref{eq:bwb2}&\quad\text{implies}\quad 1\leq v-u,\\
 \eqref{eq:bwb3}&\quad\text{implies}\quad -1\leq u-v,\\
  \eqref{eq:bwb4}&\quad\text{implies}\quad -1-t\leq (2-t)u+tv,\\
 \eqref{eq:bwb5}&\quad\text{implies}\quad t-1\leq (1-t)u+(t+1)v.
\end{align*}
Hence $v=1+u$, $u\geq \max\{-2t,-1\}$. Therefore
\begin{align*}
T_6(1-t,t)&=
\begin{cases}
\{(u,v)\::\:v=1+u,\,u\geq -2t\},&\text{if }t\leq 1/2,\\
\{(u,v)\::\:v=1+u,\,u\geq -1\},&\text{if }t> 1/2,
\end{cases}\\
&=
\begin{cases}
(-2t,1-2t)+\cone\{(1,1)\},&\text{if }t\leq 1/2,\\
(-1,0)+\cone\{(1,1)\},&\text{if }t> 1/2,
\end{cases}
\end{align*}
\item Let $z_0\in ]x_3,x_4[\iff z_0=(-t,1-t),\,t\in]0,1[$. Thus
 \begin{align*}
 \eqref{eq:bwb1}&\quad\text{implies}\quad 2t \leq -tu+(1-t)v,\\
 \eqref{eq:bwb2}&\quad\text{implies}\quad 1+2t\leq -(t+1)u+(1-t)v,\\
 \eqref{eq:bwb3}&\quad\text{implies}\quad 2\leq -u-v,\\
  \eqref{eq:bwb4}&\quad\text{implies}\quad -2\leq u+v,\\
 \eqref{eq:bwb5}&\quad\text{implies}\quad\max\{-1+t,-t\}\leq -tu+(2-t)v.
\end{align*}
Hence $u+v=-2$, $u\leq -2$. Therefore
\[
 T_6(-t,1-t)=\{(u,v)\::\:u+v=-2,\,u\leq -2\}=(-2,0)+\cone\{(-1,1)\}.
\]
\item Let $z_0\in ]x_4,x_5[\iff z_0=(t-1,-t),\,t\in]0,1[$. Thus
 \begin{align*}
 \eqref{eq:bwb1}&\quad\text{implies}\quad\max\{2-2t,1\}\leq (t-1)u-tv,\\
 \eqref{eq:bwb2}&\quad\text{implies}\quad 3-2t\leq (t-2)u-tv,\\
 \eqref{eq:bwb3}&\quad\text{implies}\quad 2\leq (t-1)u-(t+1)v,\\
  \eqref{eq:bwb4}&\quad\text{implies}\quad 0\leq u-v,\\
 \eqref{eq:bwb5}&\quad\text{implies}\quad 0\leq -u+v.
\end{align*}
$u\leq \min\{2t-2,-1\}$

Hence $v=u$, $u\leq \min\{2t-2,-1\}$. Therefore
\begin{align*}
T_6(t-1,-t)&=
\begin{cases}
\{(u,v)\::\:v=u,\,u\leq 2t-2\},&\text{if }t\leq 1/2,\\
\{(u,v)\::\:v=u,\,u\leq -1\},&\text{if }t> 1/2,
\end{cases}\\
&=
\begin{cases}
(2t-2,2t-2)+\cone\{(-1,-1)\},&\text{if }t\leq 1/2,\\
(-1,-1)+\cone\{(-1,-1)\},&\text{if }t> 1/2,
\end{cases}
\end{align*}
\item Let $z_0\in ]x_5,x_2[\iff z_0=(t,t-1),\,t\in]0,1[$. Thus
 \begin{align*}
 \eqref{eq:bwb1}&\quad\text{implies}\quad 1-t\leq tu+(t-1)v,\\
 \eqref{eq:bwb2}&\quad\text{implies}\quad 1\leq -u-v,\\
 \eqref{eq:bwb3}&\quad\text{implies}\quad\max\{-2t+1,-4t+2\}\leq tu+(t-2)v,\\
  \eqref{eq:bwb4}&\quad\text{implies}\quad -t\leq (t+1)u+(t-1)v,\\
 \eqref{eq:bwb5}&\quad\text{implies}\quad -1\leq u+v.
\end{align*}
Hence $u+v=-1$, $v\leq -1$. Therefore
\[
 T_6(t,t-1)=\{(u,v)\::\:u+v=-1,\,v\leq -1\}=(0,-1)+\cone\{(1,-1)\}.
\]
\item Let $z=(x,y)$ with $0<x\leq y<1-x$. Then
 \begin{align}
 \eqref{eq:bwb1}&\quad\text{implies}\quad -x\leq xu+yv,\label{eq:bwb1n}\\
 \eqref{eq:bwb2}&\quad\text{implies}\quad 1-x\leq (x-1)u+yv,\label{eq:bwb2n}\\
 \eqref{eq:bwb3}&\quad\text{implies}\quad -x\leq xu+(y-1)v,\label{eq:bwb3n}\\
  \eqref{eq:bwb4}&\quad\text{implies}\quad -1-y\leq (x+1)u+yv,\label{eq:bwb4n}\\
 \eqref{eq:bwb5}&\quad\text{implies}\quad -1+y\leq xu+(y+1)v.\label{eq:bwb5n}
\end{align}
Combining~\eqref{eq:bwb1n} and~\eqref{eq:bwb3n}, we obtain
\[
-\dfrac{x}{y}(u+1)\leq v\leq \dfrac{x}{1-y}(u+1),
\]
so $0\leq \dfrac{x(u+1)}{(1-y)y}$, which implies $u\geq -1$. On the other hand, combining~\eqref{eq:bwb2n} and~\eqref{eq:bwb3n}, we obtain 
\[
\dfrac{1-x}{y}(u+1)\leq v\leq \dfrac{x}{1-y}(u+1),
\]
so $0\leq \dfrac{(x+y-1)x(u+1)}{(1-y)y}$, which implies $u\leq -1$. Therefore $u=-1$ and, after replacing in any of the previous inequalities, $v=0$. It is straightforward to verify that $(-1,0)$ also satisfies~\eqref{eq:bwb4n} and~\eqref{eq:bwb5n}. Thus, for $0<x\leq y<1-x$,
\[
 T_6(x,y)=(-1,0).
\]
\item Let $z=(x,y)$ with $-1-x<y\leq x<0$. Then
\begin{align}
\eqref{eq:bwb1}&\quad\text{implies}\quad -x-y\leq xu+yv,\label{eq:bwb1nn}\\
 \eqref{eq:bwb2}&\quad\text{implies}\quad 1-2x\leq (x-1)u+yv,\label{eq:bwb2nn}\\
 \eqref{eq:bwb3}&\quad\text{implies}\quad -2x-2y\leq xu+(y-1)v,\label{eq:bwb3nn}\\
  \eqref{eq:bwb4}&\quad\text{implies}\quad -1-x-y\leq (x+1)u+yv,\label{eq:bwb4nn}\\
 \eqref{eq:bwb5}&\quad\text{implies}\quad -1-x-y\leq xu+(y+1)v.\label{eq:bwb5nn}
\end{align}
Combining~\eqref{eq:bwb1nn} and~\eqref{eq:bwb5nn}, we obtain
\[
-\dfrac{x}{y+1}(u+1)\leq v+1\leq -\dfrac{x}{y}(u+1)
\]
so $\dfrac{x(u+1)}{y(y+1)}\leq 0$, which implies $u\leq -1$. On the other hand, combining~\eqref{eq:bwb4nn} and~\eqref{eq:bwb5nn}, we obtain 
\[
-\dfrac{x}{y+1}(u+1)\leq v+1\leq -\dfrac{x+1}{y}(u+1)
\]
so $\dfrac{(x+y+1)(u+1)}{y(y+1)}\leq 0$, which implies $u\geq -1$. Therefore $u=-1$ and, after replacing in any of the previous inequalities, $v=-1$. It is also straightforward to verify that $(-1,-1)$ also satisfies~\eqref{eq:bwb2n} and~\eqref{eq:bwb3n}.Thus, for $-1-x<y\leq x<0$,
\[
 T_6(x,y)=(-1,-1).
\]
\end{itemize}
Therefore, the full correspondence rule of $T_6$ is
\begin{align*}
T_6(0,0)&=\co\{(-1,-1),(-1,0)\},\\
T_6(1,0)&=\co\{(0,-1),(0,1)\}+\cone\{(1,-1),(1,1)\},\\
T_6(0,1)&=\co\{(-1,0),(-2,0)\}+\cone\{(1,1),(-1,1)\},\\
T_6(-1,0)&=\co\{(-2,0),(-2,-2)\}+\cone\{(-1,1),(-1,-1)\},\\
T_6(0,-1)&=\co\{(-1,-1),(0,-1)\}+\cone\{(-1,-1),(1,-1)\}.
\end{align*}
and, for all $t\in]0,1[$,
\begin{align*}
T_6(1-t,t)&=\begin{cases}
          (-2t,1-2t),&\text{if }t\in]0,1/2[,\\ 
          (-1,0),&\text{if }t\in[1/2,1[,
         \end{cases}\\
T_6(-t,1-t)&=(-2,0),\\
T_6(-1+t,-t)&=\begin{cases}
          (2t-2,2t-2),&\text{if }t\in]0,1/2[,\\ 
          (-1,-1),&\text{if }t\in[1/2,1[,
         \end{cases}\\
T_6(t,t-1)&=(0,-1).
\end{align*}
In addition, for $\alpha\in]-1,1[\setminus\{0\}$,
\[
T_6(0,\alpha)=
\begin{cases}
[-\alpha-1,-1]\times\{0\}&\text{if }\alpha\in]0,1[,\\
[-1,-\alpha-1]\times\{-1\}&\text{if }\alpha\in]-1,0[,
\end{cases}
\]
and 
\[
T_6(x,y)=
\begin{cases}
 (-1,0),&\text{if }0<x\leq y<1-x,\\
 (-1,-1),&\text{if }-1-x<y\leq x<0.
\end{cases}
\]
See Figure~\ref{fig:bwb} for a partial graph of the domain and range of $T_6$.
\begin{figure}\centering
\begin{tikzpicture}[scale=1.5]
\node at (0,2.5) {$\dom(T_6)$};


\draw[semitransparent,->](-1.5,0)--(2,0)node[below right]{$x$};
\draw[semitransparent,->](0,-1.5)--(0,2)node[left]{$y$};

\fill[black!70] 
(0.5,0.5) -- (0,1) -- (0,0)--cycle;
\fill[black!70] 
(-0.5,-0.5) -- (0,-1) -- (0,0)--cycle;
\draw[very thick] (1,0) -- (0,1) -- (-1,0) -- (0,-1) -- cycle;
\draw[very thick] (0.5,0.5) -- (-0.5,-0.5);
\draw[very thick] (0.5,0.5) -- (-0.5,-0.5);
\draw[very thick] (0,1) -- (0,-1);

\fill
(0,0)circle(1.5pt)node[above left]{\small $x_1$};
\fill
(1,0)circle(1.5pt)node[below right]{\small $x_2$};
\fill
(0,1)circle(1.5pt)node[above right]{\small $x_3$};
\fill
(-1,0)circle(1.5pt)node[below]{\small $x_4$};
\fill
(0,-1)circle(1.5pt)node[left]{\small $x_5$};
\end{tikzpicture}
\quad
\begin{tikzpicture}[scale=1]
\node at (-1,2.6) {$\ran(T_6)$}; 

\fill[gray!30] 
(1.5,2) -- (1,2) --  (0,1) -- (0,-1) -- (1.5,-2.5) --cycle;
\fill[gray!30] 
(1,2) --  (-1,0) --  (-2,0) -- (-3.5,1.5) -- (-3.5,2) -- cycle;
\fill[gray!30] 
(-3.5,1.5) -- (-2,0) -- (-2,-2) -- (-3,-3) -- (-3.5,-3) -- cycle;
\fill[gray!30] 
(-3,-3) -- (-1,-1) -- (0,-1) -- (1.5,-2.5) -- (1.5,-3)-- cycle;
\draw[line width=2pt] (-1,0) -- (-1,-1);
\draw[very thick] (1,2) --  (0,1) -- (0,-1) -- (1.5,-2.5);
\draw[very thick] (0,1) --  (-1,0) --  (-2,0) -- (-3.5,1.5);
\draw[very thick] (-2,0) -- (-2,-2) -- (-3,-3);
\draw[very thick] (-3,-3) -- (-1,-1) -- (0,-1);


\draw[dashed,semitransparent,->](-3.5,0)--(1.8,0)node[below right]{$u$};
\draw[dashed,semitransparent,->](0,-3)--(0,2.1)node[left]{$v$};

\fill
(-1,-0.5)circle(2pt)node[right]{\footnotesize $x_1^*$};
\fill
(0,1)circle(2pt)node[right]{\footnotesize $x_2^*$};
\fill
(-1,0)circle(2pt)node[above]{\footnotesize $x_3^*$};
\fill
(-2,-2)circle(2pt)node[below]{\footnotesize $x_4^*$};
\fill
(0,-1)circle(2pt)node[below left]{\footnotesize $x_5^*$};
\node[above] 
at (1,0) {\scriptsize $T_6(x_2)$};
\node 
at (-1.5,1.25) {\scriptsize $T_6(x_3)$};
\node 
at (-3,-1) {\scriptsize $T_6(x_4)$};
\node 
at (-0.5,-2) {\scriptsize $T_6(x_5)$};

\end{tikzpicture}
\caption{Yet another maximal 2-cyclically monotone operator}\label{fig:bwb}
\end{figure}
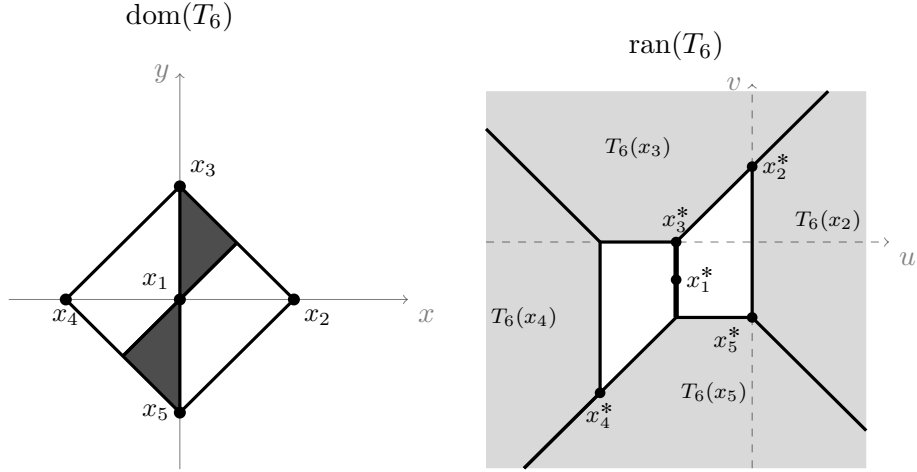

\subsubsection{Proof of maximal 2-cyclical monotonicity}
To prove the maximality of $T_6$, consider $T_6=\ds\bigcup_{i=1,\ldots,6}G_i$, where $G_i=T_6|_{D_i}$, $i=1,\ldots, 6$, and
\begin{gather*}
 D_1=\{(x,y)\in\R^2\::\:|x|+|y|=1\},\quad 
 D_2=\{(x,y)\in\R^2\::\:0<x\leq y<1-x\},\\
 D_3=\{(x,y)\in\R^2\::\:-1-x<y\leq x<0\},\quad
 D_4=\{(0,\alpha)\in\R^2\::\:0<\alpha<1\}\\
 D_5=\{(0,0)\},\quad 
 D_6=\{(0,\alpha)\in\R^2\::\:-1<\alpha<0\}.
\end{gather*}

Let $A=(a,a^*)$, $B=(b,b^*)$, $C=(c,c^*)$  in $T_6$, we need to prove that
\begin{equation}\label{eq:S}
S=\inner{b-a}{a^*}+\inner{c-b}{b^*}+\inner{a-c}{c^*}\leq 0.
\end{equation}
As $T_6=\displaystyle\bigcup_{i=1}^6 G_i$ then $A\in G_i$, $B\in G_j$, $C\in G_k$, where $(i,j,k)$ must belong to the set $\{1,\ldots,6\}^3$ which has 216 elements.

To prove the vast majority of the 216 cases, we are going to use the command \texttt{Simplify}~\cite{cite:simplify} of the symbolic calculus software \emph{Mathematica}. We used \emph{Mathematica} 8.0.4 for Linux. First consider the following definitions
\lstset{language=Mathematica}
\begin{lstlisting}
G2[p_, q_, r_, s_] := 
   (0 < p <= q < 1 - p) && (r == -1) && (s == 0) 
     && (q > 0) && (p - q <= 0)
     
G3[p_, q_, r_, s_] := 
   (-1 - p < q <= p < 0) && (r == -1) && (s == -1) 
     && (q < 0)
     
G4[p_, q_, r_, s_] := 
   (p == 0) && (0 < q < 1) && (-q - 1 <= r <= -1) 
     && (s == 0) && (r > -2)
     
G5[p_, q_, r_, s_] := 
   (p == 0) && (q == 0) && (r == -1) && (-1 <= s <= 0)

G6[p_, q_, r_, s_] := 
   (p == 0) && (-1 < q < 0) && (-1 <= r <= -1 - q) 
     && (s == -1) && (r < 0)
\end{lstlisting}
Note that $((p,q),(r,s))\in G_l$, for $l\in\{2,\ldots,6\}$, if, and only if, the respective evaluation of the boolean functions \lstinline!Gl[p,q,r,s]! is \lstinline!True!.

\paragraph{The case $(i,j,k)=(1,1,1)$:}
This case is equivalent to prove that $G_1$ is 2-cyclically monotone. In view of the definition of $T_6$, $G_1$ can be expressed as
\[
G_1=\bigcup_{i=1}^{10} [(w_i,w_i^*),(w_{i+1},w_{i+1}^*)]+N_{D_1},
\]
where $(w_{11},w_{11}^*)=(w_1,w_1^*)$ and 
\begin{align*}
w_1&=(1,0),&w_1^*&=(0,-1),&w_2&=(1, 0),&w_2^*&=(0,1), \\
w_3&=(1/2,1/2),&w_3^*&=(-1,0),&w_4&=(0,1),&w_4^*&=(-1,0),\\
w_5&=(0,1),&w_5^*&=(-2,0),&w_6&=(-1,0),&w_6^*&=(-2,0),\\
w_7&=(-1,0),&w_7^*&=(-2,-2),&w_8&=(-1/2,-1/2),&w_8^*&=(-1,-1),\\
w_9&=(0,-1),&w_9^*&=(-1,-1),&w_{10}&=(0,-1),&w_{10}^*&=(0,-1).\\
\end{align*}
Using the procedure \texttt{ispmono} given in Section~\ref{sec:testpmono}, we verify that $\mathcal{W}=\{(w_i,w_i^*)\}_{i=1}^{10}$ is 2-cyclically monotone. It is straightforward to verify that
\[
\inner{w_i-w_{i+1}}{w_i^*-w_{i+1}^*}=0,\quad\forall\,i=1,\ldots,10.
\]
Thus, using Lemma~\ref{lem:orto}, we conclude that $G_1$ is also 2-cyclically monotone and $G_1^{\mu_2}=\mathcal{W}^{\mu_2}$.

This proves 1 case of 216.

\paragraph{The case $(i,j,k)=(1,1,k)$, $k\neq 1$:} By cyclicity of $S$, this case is equivalent to the cases $(1,j,1)$ and $(i,1,1)$, $i,j\neq 1$, totalling 15 cases. 

Note that, $S\leq 0$, for all $A,B\in G_1$, $C\in G_k$ is equivalent to $G_k\subset G_1^{\mu_2}$. Since $G_1^{\mu_2}=\mathcal{W}^{\mu_2}$, we use the procedure \texttt{polareqs} given in Section~\ref{sec:maxima} applied to the finite operator $\mathcal{W}=\{(w_i,w_i^*)\}_{i=1}^{10}$. So any $\bar x=(x,y)$, $\bar x^*=(u,v)$ such that $(\bar x,\bar x^*)\in \mathcal{W}^{\mu_2}$ must satisfy the equations:
\begin{align*}
\max\left\{\begin{array}{c}-(x+2),-2(x+1),-(y+1)\\-(x+y+1),y-4,-2(x+y+1)\end{array}\right\}&\leq u(x+1)+vy,\\
\max\left\{\begin{array}{c}-2(x+1),-(2x+3)/2,-y-1,\\-y-x-1,y-2,-2(y+x+1)\end{array}\right\}&\leq u(x+1/2)+v(y+1/2),\\
\max\left\{\begin{array}{c}-x-1,-2(x+1),-y-1,\\-y-x-1,y-1,-2(y+x+1)\end{array}\right\}&\leq ux+v(y+1),\\
\max\left\{\begin{array}{c}-2x,-x,-y-1,-y-x,\\y-1,-2(y+x)\end{array}\right\}&\leq ux+v(y-1),\\
\max\left\{\begin{array}{c}-(2x-1)/2,-(4x-1)/2,-y-x,\\(2y-1)/2,-(2y+1)/2,-(4y+4x+1)/2\end{array}\right\}&\leq u(x-1/2)+v(y-1/2),\\
\max\left\{\begin{array}{c}1-2x,1-x,-y-x,\\-y,y,-2(y+x+1)\end{array}\right\}&\leq u(x-1)+vy,
\end{align*}
These equations can be implemented in \emph{Mathematica}, as boolean functions in terms of $(x,y,u,v)$.
\begin{lstlisting}
pol1[x_, y_, u_, v_] := 
   Max[-(x + 2), -2 (x + 1), -(y + 1), -(x + y + 1), 
     y - 4, -2 (x + y + 1)] <= u (x + 1) + v y
   
pol2[x_, y_, u_, v_] := 
   Max[-2 (x + 1), -(2 x + 3)/2, -y - 1, -y - x - 1, 
     y - 2, -2 (y + x + 1)] <= u (x + 1/2) + v (y + 1/2)
   
pol3[x_, y_, u_, v_] := 
   Max[-x - 1, -2 (x + 1), -y - 1, -y - x - 1, 
     y - 1, -2 (y + x + 1)] <= u x + v (y + 1)
   
pol4[x_, y_, u_, v_] := 
   Max[-2 x, -x, -y - 1, -y - x, y - 1, -2 (y + x)] 
     <= u x + v (y - 1)
   
pol5[x_, y_, u_, v_] := 
   Max[-(2 x - 1)/2, -(4 x - 1)/2, -y - x, (2 y - 1)/2, 
     -(2 y + 1)/2, -(4 y + 4 x + 1)/2] 
     <= u (x - 1/2) + v (y - 1/2)
   
pol6[x_, y_, u_, v_] := 
   Max[1 - 2 x, 1 - x, -y - x, -y, y, -2 (y + x + 1)]
     <= u (x - 1) + v y 
\end{lstlisting}
The instruction
\begin{lstlisting}
Table[
   Simplify[eq[x, y, u, v],g[x, y, u, v]],
     {eq, {pol1, pol2, pol3, pol4, pol5, pol6}}, 
     {g, {G2, G3, G4, G5, G6}}]
\end{lstlisting}
allow us to obtain a $6\times 5$ array full of \lstinline!True! symbols. This implies that for $i\in\{2,\ldots,6\}$, every element in $G_i$ satisfies the defining equations of $G_1^{\mu_2}$. Thus $G_i\subset G_1^{\mu_2}$.

\paragraph{The case $(i,j,k)=(1,j,k)$, $j,k\neq 1$:}
By cyclicity of $S$, this case is equivalent to the cases $(i,1,k)$ and $(i,j,1)$. Each one comprehends 25 cases, totalling 75 cases.

For fixed $(b,b^*)$, $(c,c^*)$, consider the supremum
\begin{align*}
\hat S&=\sup_{(a,a^*)\in G_1}\inner{b-a}{a^*}+\inner{a}{c^*}\\
 &=\sup\left\{\inner{b-a_l(t)}{a_l^*(t)}+\inner{a_l(t)}{c^*}\::\:\begin{array}{c}l=1,\ldots,10,\\t\in[0,1]\end{array}\right\},
\end{align*}
where $(a_l(t),a_l^*(t))=(1-t)(w_l,w_l^*)+t(w_{l+1},w_{l+1}^*)$, for all $l=1,\ldots,10$.
As $G_1$ is compact, the previous supremum is attained, so there exist $\tilde l\in\{1,\ldots,10\}$, $s\in [0,1]$ such that
\[
\hat S=\inner{b-a_{\tilde l}(s)}{a_{\tilde l}^*(s)}+\inner{a_{\tilde l}(s)}{c^*}.
\]
Since $t\mapsto\inner{b-a_{\tilde l}(t)}{a_{\tilde l}^*(t)}+\inner{a_{\tilde l}(t)}{c^*}$ is affine, the supremum must be attained when $s=0$ or $s=1$. This means 
\[
\hat S=\inner{b-w_k}{w_k^*}+\inner{w_k}{c^*}
\]
where $k={\tilde l}$ or $k={\tilde l}+1$. Therefore
\[
\hat S=\max_{l=1,\ldots,10} \inner{b-w_l}{w_l^*}+\inner{w_l}{c^*}
\]
and thus
\begin{gather*}
\inner{b-a}{a^*}+\inner{c-b}{b^*}+\inner{a-c}{c^*}\leq 0,\quad\forall (a,a^*)\in G_1,\,(b,b^*)\in G_j,\,(c,c^*)\in G_k\\
\text{if and only if}\\
\inner{b-w_i}{w_i^*}+\inner{c-b}{b^*}+\inner{w_i-c}{c^*}\leq 0,\quad
\begin{array}{c}
\forall i\in\{1,\ldots,10\},\\
\forall (b,b^*)\in G_j,\,(c,c^*)\in G_k
\end{array}\\
\text{if and only if}\\
f(b,c^*)+\inner{c-b}{b^*}-\inner{c}{c^*}\leq 0,\quad\forall (b,b^*)\in G_j,\,(c,c^*)\in G_k,
\end{gather*}
where $f(b,c^*)=\displaystyle\max_{i=1,\ldots,10} \inner{b-w_i}{w_i^*}+\inner{w_i}{c^*}$. Function $f$ can be implemented in \emph{Maxima} with the following code:
\lstset{language=Maxima}
\begin{lstlisting}
f(opW,b,cs):=block([ww,maxval],
   maxval:-inf,
   for ww in opW do(
      maxval:max(maxval,(b-xc(ww)).xsc(ww)+xc(ww).cs)
   ),
   maxval
)$
\end{lstlisting}

Now consider $b=(x,y)$ and $c^*=(u,v)$. Using the above procedure we obtain
\begin{equation}\label{eq:funf}
f(b,c^*)= \max\left\{
\begin{array}{c}
v-2x,-x+(v+u+1)/2,v-x,u-y,\\
-2(x+1)-u,-2(y+x+1)-u,-y-v-1,\\
-y-x-v-1,-y-x-(v+u)/2-1,y+u\end{array}\right\} 
\end{equation}

To verify the 25 cases of the form $(1,j,k)$, $j,k\in\{2,\ldots,6\}$, consider the following definition in \emph{Mathematica} of the function $f$ as in \eqref{eq:funf},
\lstset{language=Mathematica}
\begin{lstlisting}
fn[x_, y_, u_, v_] := 
   Max[ v - 2 x, -x + (v + u + 1)/2, v - x, -2 (x + 1) - u,
     -2 (y + x + 1) - u, u - y, -y - v - 1, -y - x - v - 1, 
     -y - x - (v + u)/2 - 1, y + u ]
\end{lstlisting}
We next define 
\begin{lstlisting}
hatS = fn[bx, by, csx,csy] 
   + (bsx (cx - bx) + bsy (cy - by)) - (cx csx + cy csy)
\end{lstlisting}
where $b=(bx,by)$, $b^*=(bsx,bsy)$, $c=(cx,cy)$, $c^*=(csx,cxy)$. In order to verify $S\leq 0$, whenever $(a,a^*)\in G_1$ and $(b,b^*)\in G_j$, $(c,c^*)\in G_k$, for  $j,k\in\{2,\ldots,6\}$, we consider the following instruction

\begin{lstlisting}
Table[ 
   Simplify[hatS <= 0, 
      g[bx, by, bsx, bsy] && h[cx, cy, csx, csy]], 
      {g, {G2, G3, G4, G5, G6}}, {h, {G2, G3, G4, G5, G6}}
   ]
\end{lstlisting}
This instruction asks to simplify the claim \lstinline!S<=0! subject to both the conditions 
\lstinline!g[bx, by, bsx, bsy]! and \lstinline!h[cx, cy, csx, csy]! being true. After running the previous instruction, \emph{Mathematica} outputs a list of 25 ``\lstinline!True!'' symbols. 

\paragraph{The case $(i,j,k)$, $i,j,k\neq 1$:}
We now address the remaining 125 cases. In a similar way as before, we define 
\begin{lstlisting}
S = ({bx, by} - {ax, ay}).{asx, asy} 
    + ({cx, cy} - {bx, by}).{bsx, bsy}
       + ({ax, ay} - {cx, cy}).{csx, csy}
\end{lstlisting}
as in \eqref{eq:S}, where $a=(ax,ay)$, $a^*=(asx,asy)$, $b=(bx,by)$, $b^*=(bsx,bsy)$, $c=(cx,cy)$, $c^*=(csx,cxy)$. 
\begin{lstlisting}
Table[{f, g, h, 
    Simplify[S <= 0, 
      f[ax, ay, asx, asy] 
      && g[bx, by, bsx, bsy] && h[cx, cy, csx, csy]]}, 
    {f, {G2, G3, G4, G5, G6}}, 
    {g, {G2, G3, G4, G5, G6}}, {h, {G2, G3, G4, G5, G6}}]
\end{lstlisting}
This instruction creates a $125\times 4$ table with the simplification of the term $S\leq 0$,  alongside the case considered. For instance, the row \lstinline!{G2,G3,G6,True}! means that $S\leq 0$ simplifies to True, whenever $(a,a^*)\in G_2$, $(b,b^*)\in G_3$ and $(c,c^*)\in G_6$.  In this case, however, the array produced contains mostly \lstinline!True! symbols in its fourth column, with the exception of the following rows:
\begin{lstlisting}
 {G2, G5, G6, ax + ax csx + cy + bsy cy <= ay}
 {G5, G6, G2, by + asy by + cx + bsx cx <= cy}
 {G6, G2, G5, ay + bx + asx bx + ay csy <= by}
 {G3, G5, G4, ax + ay + ax csx + bsy cy <= 0}
 {G4, G3, G5, bx + asx bx + by + ay csy <= 0}
 {G5, G4, G3, asy by + cx + bsx cx + cy <= 0}
\end{lstlisting}
These rows correspond to the cases $(i,j,k)=(2,5,6)$ (or, equivalently, $(5,6,2)$ or $(6,2,5)$) and $(i,j,k)=(3,5,4)$ (equivalently, $(5,4,3)$ or $(4,3,5)$). We now deal with these two final cases separately:
\begin{itemize}
 \item When $(i,j,k)=(2,5,6)$, it means that $a=(ax,ay)$, $a^*=(asx,asy)$, $b=(bx,by)$, $b^*=(bsx,bsy)$, $c=(cx,cy)$ and $c^*=(csx,csy)$, satisfy
 \begin{gather*}
0 < ax \leq  ay < 1 - ax,\quad asx = -1,\quad asy = 0,\\
bx = 0 ,\quad by = 0 ,\quad bsx = -1 ,\quad -1 \leq  bsy \leq  0,\\
cx = 0 ,\quad -1 < cy < 0 ,\quad -1 \leq  csx \leq -1 - cy ,\quad csy = -1
 \end{gather*}
Replacing the above equalities on equation~\eqref{eq:S}, we obtain
\[
S= (ax - ay) + (ax \cdot csx) + (1 + bsy) cy.
\]
It trivially follows that $S\leq 0$, since $ax-ay\leq 0$, $ax>0$, $csx<0$, $1+bsy\geq 0$ and $cy<0$.
 \item When $(i,j,k)=(3,5,4)$, it means that $a=(ax,ay)$, $a^*=(asx,asy)$, $b=(bx,by)$, $b^*=(bsx,bsy)$, $c=(cx,cy)$ and $c^*=(csx,csy)$, satisfy
 \begin{gather*}
-1 - ax < ay <= ax < 0,\quad asx = -1,\quad asy = -1,\\
bx = 0 ,\quad by = 0 ,\quad bsx = -1 ,\quad -1 \leq  bsy \leq  0,\\
cx = 0 ,\quad  0 < cy < 1 ,\quad -1-cy \leq  csx \leq -1  ,\quad csy = 0
 \end{gather*}
Replacing the above equalities on equation~\eqref{eq:S}, we obtain
\[
S= ax + ay + ax\cdot csx + bsy\cdot cy\leq 2ax+ax\cdot csx+bsy\cdot cy,
\]
since $ay\leq ax$. It follows that $S\leq 0$, since $ax< 0$, $csx>-2$, $bsy\leq 0$ and $cy>0$.
\end{itemize}

We now are able to conclude that $T_6$ is 2-cyclically monotone, and therefore, maximal 2-cyclically monotone.


\end{document}